\newtheorem{theorem}{Theorem}
\newtheorem{lemma}[theorem]{Lemma}
\newtheorem{remark}[theorem]{Remark}
\newtheorem{proposition}[theorem]{Proposition}
\numberwithin{theorem}{section} \numberwithin{equation}{section}
\newtheorem*{condition}{Conditions for $L$}
\newcommand{\beq}{\begin{small} \begin{equation}}
\newcommand{\eeq}{\end{equation} \end{small}}
\newcommand{\beqn}{\begin{small} \begin{equation*}}
\newcommand{\eeqn}{\end{equation*} \end{small}}
\DeclareMathAlphabet{\mathpzc}{OT1}{pzc}{m}{it}
\newcommand\omicron{o}
\newcommand\scalemath[2]{\scalebox{#1}{\mbox{\ensuremath{\displaystyle #2}}}}
\newcommand*{\MyScalePicBig}{0.3}
\newcommand*{\MyScalePicTiny}{0.11}
\newcommand{\upper}[1]{\raisebox{2pt}{\scriptsize$#1$}}
\newcommand{\ti}{\widetilde}
\newcommand{\FF}{\mathbb{F}}
\newcommand{\RR}{\mathbb{R}}
\newcommand{\ZZ}{\mathbb{Z}}
\newcommand{\PP}{\mathbb{P}}
\newcommand{\calC}{\mathcal{C}}
\newcommand{\calQ}{\mathcal{Q}}
\newcommand{\calK}{\mathcal{K}}
\newcommand{\calW}{\mathcal{W}}
\newcommand{\calN}{\mathcal{N}}
\newcommand{\calS}{\mathcal{S}}
\newcommand{\calY}{\mathcal{Y}}
\newcommand{\calX}{\mathcal{X}}
\newcommand{\calZ}{\mathcal{Z}}
\newcommand{\Aut}{\mathop{\mathrm{Aut}}}
\begin{document}
\title[On two-elementary K3 surfaces with finite automorphism group]{On two-elementary K3 surfaces with finite automorphism group}
\author{Adrian Clingher}
\address{Department of Mathematics and Statistics, University of Missouri - St. Louis, St. Louis, MO 63121}
\email{clinghera@umsl.edu}
\author{Andreas Malmendier}
\address{Department of Mathematics \& Statistics, Utah State University, Logan, UT 84322}
\email{andreas.malmendier@usu.edu}
\author{Flora Poon}
\address{National Center for Theoretical Sciences Mathematics Division, National Taiwan University, Taipei 106319, Taiwan (R.O.C.)}
\email{wkpoon@ncts.ntu.edu.tw}
\begin{abstract}
We study complex algebraic K3 surfaces of Picard ranks $11, 12$, and $13$ of finite automorphism group that admit a Jacobian elliptic fibration with a section of order two. We prove that the K3 surfaces admit a birational model isomorphic to a projective quartic hypersurface and construct geometrically the frames of all supported Jacobian elliptic fibrations.  We determine the dual graphs of all smooth rational curves for these K3 surfaces, the polarizing divisors, and the embedding of the reducible fibers in each frame into the corresponding dual graph.
 \end{abstract}
\keywords{K3 surfaces, elliptic fibrations, Nikulin involutions}
\subjclass[2020]{14J27, 14J28}
\maketitle
%
%
 \section{Introduction}\label{sec_intro}
Let $\mathcal{X}$ be a smooth algebraic K3 surface defined over the field of complex numbers. Denote by $\operatorname{NS}(\mathcal{X})$ the N\'eron-Severi lattice of $\mathcal{X}$. 
Let $L$ be a choice of even indefinite lattice of rank $\rho_L$ and signature $(1,\rho_L-1)$ where $ 1 \leq \rho_L \leq 19$. 
As in \cite{MR4635303}*{Section 2B}, fix $h$ to be a very irrational vector in $L\otimes \RR$ of positive norm, where very irrational means that $h\notin L' \otimes \RR$ for any primitive proper sublattice $L' \subsetneq L$.
Then, following \cite{MR4635303}*{Definition 2.6}, a {\it lattice polarization} on $\mathcal{X}$ is, by definition, a choice of a primitive lattice embedding $i \colon L \hookrightarrow \operatorname{NS}(\mathcal{X})$, such that $i(h)$ is big and nef. Two $L$-polarized K3 surfaces $(\mathcal{X},i)$ and $(\mathcal{X}',i')$ are said to be isomorphic  if there exists an isomorphism $\alpha \colon \mathcal{X} \rightarrow \mathcal{X}'$ and a lattice isometry $\beta \in O(L)$ such that $ \alpha^* \circ i' = i \circ \beta$, where $\alpha^*$ is the induced morphism at cohomology level.
\par It is known \cite{MR1420220} that $L$-polarized K3 surfaces are classified, up to isomorphism, by a coarse moduli space $\mathscr{M}_L$, which is a quasi-projective variety of dimension $20-\rho_L$.  A \emph{very general} $L$-polarized K3 surface $(\mathcal{X},i)$ satisfies $i(L)=\operatorname{NS}(\mathcal{X})$. In particular, the Picard number of $\calX$ is $\rho_L$.
\par If the standard rank-two hyperbolic lattice $H$ primitively embeds into $L$, then a $L$-polarized K3 surface $\calX$ has a Jacobian elliptic fibration, which is an elliptic fibration $\pi_\mathcal{X} \colon \mathcal{X} \rightarrow \mathbb{P}^1 $ with a choice of a section \cite{MR2369941}*{Lemma~3.6}.
If furthermore, the negative definite rank-eight Nikulin lattice \cite{MR728142}*{Def.~5.3} that we denote by $N$, also primitively embeds into $L$, 
then the  Mordell-Weil group $\operatorname{MW}(\mathcal{X}, \pi_\mathcal{X})$ associated to the elliptic fibration $\pi_\mathcal{X}$
has an element of order-two  \cite{MR2274533}.
This element can be seen geometrically as a second section in the elliptic fibration $\pi_\mathcal{X}$, which determines an order-two point in each smooth fiber. Fiber-wise translation by these order-two points further determines a canonical symplectic involution known as a \emph{van Geemen-Sarti involution} (see Section~\ref{subsec_K3 surfaces with van Geemen-Sarti involution}). 

\par In this article, we study families of lattice polarized K3 surfaces whose polarization lattice $L$ satisfies the following conditions (see Section \ref{subsec_elliptic fibrations of L-polarised K3 surfaces} for details).
\begin{condition}
~\\
\vspace{-0.5cm}
\begin{enumerate}[label=(\alph*)]
\item $L \simeq H \oplus K$, with $K$ a negative-definite lattice of ADE type,\label{cond_a}
\item $L$ admits a unique primitive lattice embedding $H \oplus N \hookrightarrow L$, \label{cond_b}
\item $L$ is of finite automorphism group type, in the sense of Nikulin \cites{MR3165023,MR633160b}.  \label{cond_c}
\end{enumerate} 
\end{condition}

\par 
By standard classification results on lattices \cite{MR633160b} and elliptic fibrations \cites{MR1813537, MR4130832}, one can exhaust all the lattices $L$ satisfying conditions (a)-(c) up to isometry, which are displayed in column 2 in Table \ref{tab:extension}.
\begin{table}[!ht]
\scalemath{0.7}{
\begin{tabular}{c|c|lclll}
$\begin{array}{c} \rho_L \\ {\scriptstyle (\ell_L, \delta_L)} \end{array}$		&  \multicolumn{1}{c|}{$L$}  & \multicolumn{1}{c}{$K^{\text{root}}$} &  \multicolumn{1}{l}{$\qquad \mathpzc{W}$} 	& construction  	& dual graph& embedding\\
\hline
\hline
$10$				& $H(2)\oplus D_4(-1)^{\oplus 2}\cong H \oplus N$ 			& $8 A_1$ 		& $\mathbb{Z}/2\mathbb{Z}$ 		&  \; \cite{MR2274533}	& \cites{MR2274533, Roulleau22}& \\
\upper{(6, 0)} 		&														& $2 D_4$		& no section					&  \; \cites{MR4444083,MR4069236,MR1029967} & \\
\hline
$11$ 			& $H\oplus D_4(-1) \oplus A_1(-1)^{\oplus 5}$ 						& $D_4 + 5 A_1$ 	& $\{ \mathbb{I} \}$			& \multirow{3}{*}{\scalerel*[1ex]{\}}{\rule[-3ex]{1ex}{7ex}}\, Thm.~\ref{thm_11}}				& \multirow{3}{*}{Fig.~\ref{fig:pic11_2folds}} & \multirow{3}{*}{Thm.~\ref{thm:polarization11}(5)}\\
\upper{(7, 1)}		& 														& $9 A_1$ 		& $\mathbb{Z}/2\mathbb{Z}$	\\	
				&														& $2 D_4 + A_1$	& no section				\\
\hline
$12$ 			& $H\oplus D_6(-1) \oplus A_1(-1)^{\oplus 4}$ 						& $D_6 + 4 A_1$ 	& $\{ \mathbb{I} \}$			& \multirow{3}{*}{\scalerel*[1ex]{\}}{\rule[-3ex]{1ex}{7ex}}\, Thm.~\ref{thm_12}}				& \multirow{3}{*}{Fig.~\ref{fig:pic12_2folds}} & \multirow{3}{*}{Thm.~\ref{thm:polarization12}(5)}\\
\upper{(6,1)} 		& $\quad \cong H \oplus D_4(-1)^{\oplus 2}  \oplus A_1(-1)^{\oplus 2}$	& $2 D_4 + 2 A_1$ 	& $\{ \mathbb{I} \}$			\\
&																		& $D_4 + 6 A_1$ 	& $\mathbb{Z}/2\mathbb{Z}$	\\
\hline
$13$ 			& $H\oplus E_7(-1) \oplus A_1(-1)^{\oplus 4}$ 						& $E_7 + 4 A_1$ 	& $\{ \mathbb{I} \}$			& \multirow{4}{*}{\scalerel*[1ex]{\}}{\rule[-4ex]{1ex}{9ex}}\, Thm.~\ref{thm_13}}				& \multirow{4}{*}{Fig.~\ref{fig:pic13_2folds}} & \multirow{4}{*}{Thm.~\ref{thm:polarization13}(5)}\\
\upper{(5,1)} 		& $\quad \cong H \oplus D_8(-1)  \oplus A_1(-1)^{\oplus 3}$ 			& $D_8 + 3 A_1$ 	& $\{ \mathbb{I} \}$			\\
				& $\quad \cong H \oplus D_6(-1)  \oplus D_4(-1) \oplus A_1(-1)$ 		& $D_6 + D_4 + A_1$ & $\{ \mathbb{I} \}$			\\
&																		& $D_6 + 5 A_1$ 	& $\mathbb{Z}/2\mathbb{Z}$	\\
\hline
$14$ 			& $H\oplus D_8(-1) \oplus D_4(-1)$ 								& $D_8 + D_4$ 	& $\{ \mathbb{I} \}$				& \multirow{2}{*}{\scalerel*[1ex]{\}}{\rule[-2ex]{1ex}{5ex}}\, \cite{Clingher:2020baq}}			& \multirow{2}{*}{\cites{Clingher:2020baq, MR1029967}} \\
\upper{(4,0)} 		&														& $E_7 + 5 A_1$ 	& $\mathbb{Z}/2\mathbb{Z}$	\\
\hline
$14$ 			& $H\oplus E_8(-1) \oplus A_1(-1)^{\oplus 4}$ 						& $E_8 + 4 A_1$ 	& $\{ \mathbb{I} \}$					& \multirow{5}{*}{\scalerel*[1ex]{\}}{\rule[-5ex]{1ex}{11ex}}\, \cite{Clingher:2020baq}}			& \multirow{5}{*}{\cites{Clingher:2020baq, MR1029967}} \\
\upper{(4,1)} 		& $\quad \cong H \oplus D_{10}(-1) \oplus A_1(-1)^{\oplus 2}$ 			& $D_{10} + 2 A_1$ 	& $\{ \mathbb{I} \}$			\\
				& $\quad \cong H \oplus E_7(-1) \oplus D_4(-1) \oplus A_1(-1)$			& $E_7 + D_4 + A_1$& $\{ \mathbb{I} \}$			\\
				& $\quad \cong H \oplus D_{6}(-1)^{\oplus 2}$ 						& $2 D_6$ 		& $\{ \mathbb{I} \}$			\\
				&														& $D_8 + 4 A_1$ 	& $\mathbb{Z}/2\mathbb{Z}$	\\
\hline
$15$				& $H \oplus D_{12}(-1) \oplus A_1(-1)$ 							& $D_{12} + A_1$ 	& $\{ \mathbb{I} \}$			& \multirow{4}{*}{\scalerel*[1ex]{\}}{\rule[-4ex]{1ex}{9ex}}\, \cite{Clingher:2020baq}}			& \multirow{4}{*}{\cites{Clingher:2020baq, MR1029967, MR633160b}} & \\
\upper{(3,1)} 		& $\cong  H \oplus E_8(-1)  \oplus D_4(-1) \oplus A_1(-1)$ 			& $E_8 + D_4 + A_1$& $\{ \mathbb{I} \}$			\\			
				& $\cong H \oplus E_7(-1) \oplus D_6(-1)$ 						& $E_7 + D_6$ 	& $\{ \mathbb{I} \}$			\\
				&														& $D_{10} + 3 A_1$	&$\mathbb{Z}/2\mathbb{Z}$	\\
\hline
$16$				& $H \oplus D_{14}(-1)$ 										& $D_{14}$ 		& $\{ \mathbb{I} \}$			& \multirow{4}{*}{\scalerel*[1ex]{\}}{\rule[-4ex]{1ex}{9ex}}\, \cites{MR4544843, MR4160930}}							& \multirow{4}{*}{\cites{MR4544843, MR1029967, MR633160b}} \\
\upper{(2,1)}  		& $\cong H \oplus E_8(-1)  \oplus D_6(-1)$ 						& $E_8 + D_6$ 	& $\{ \mathbb{I} \}$			\\			
				& $\cong H \oplus E_7(-1) \oplus E_7(-1)$ 						& $2 E_7$ 		& $\{ \mathbb{I} \}$			\\	
				&														& $D_{12} + 2 A_1$ 	&$\mathbb{Z}/2\mathbb{Z}$	\\
\hline
$17$ 			& $H \oplus E_8(-1)  \oplus E_7(-1)$ 							& $E_8 + E_7$ 	& $\{ \mathbb{I} \}$				& \multirow{2}{*}{\scalerel*[1ex]{\}}{\rule[-2ex]{1ex}{5ex}}\, \cites{MR2427457, MR2824841, MR2935386}}				& \multirow{2}{*}{\cites{MR1029967, MR2824841, MR633160b}} &\\
\upper{(1,1)}		& 														& $D_{14} + A_1$ 	& $\mathbb{Z}/2\mathbb{Z}$	\\
\hline
$18$ 			& $H \oplus E_8(-1)  \oplus E_8(-1)$ 							& $2 E_8$ 		& $\{ \mathbb{I} \}$			& \multirow{2}{*}{\scalerel*[1ex]{\}}{\rule[-2ex]{1ex}{5ex}}\, \cites{MR2369941, MR2279280}}	& \multirow{2}{*}{\cites{MR2369941,MR1029967, MR633160b}} & \\
\upper{(0,0)} 		& 														& $D_{16}$ 		& $\mathbb{Z}/2\mathbb{Z}$	\\
\hline
\end{tabular}}
\captionsetup{justification=centering}
\caption{}
\label{tab:extension}
\end{table}

\par
As it turns out, there is one lattice $L$ satisfying (a)-(c) in each rank $11 \leq \rho_L \leq 18$, with the exception of rank $\rho_L = 14$, for which two inequivalent instances exist.  Note that the lattice $H\oplus N$ in the first row does not satisfy (a).  Nevertheless we have included the lattice in the table: each $L$-polarized K3 surface has a unique $H\oplus N$-polarization.  Moreover, the $H\oplus N$-polarized family is in fact a family of Jacobian elliptic surfaces (see Section {\color{blue} 3.1}), obtained from a different family of K3 surfaces of Picard rank $10$. From the latter family we will also derive birational models for the families of higher Picard rank in Table~\ref{tab:extension}.

Note that the rank $19$ lattice $L = H\oplus E_8(-1) \oplus E_8(-1)\oplus A_1(-1)$ is the only other lattice that is not shown in the table that satisfies conditions (a) and (c), and has a $H\oplus N$ embedding. However, the $H\oplus N$ embedding is not unique.

The topics of interest here are the geometric construction of the frames of the Jacobian elliptic fibrations for each $L$-polarized family and their dual graphs of rational curves: see columns $6$ and $7$ on Table \ref{tab:extension}. For those families of Picard rank at least $14$ and the $H\oplus N$-polarization family,  these questions were discussed in previous works by the authors and others.  The present paper fills in the gap for the remaining three families of Picard rank $11, 12$ and $13$.  Moreover, we present  an embedding of the irreducible fibers of each Jacobian elliptic fibration of these three families into the corresponding dual graphs by coloring their vertices: see column $8$ on Table \ref{tab:extension}.  
A summary of the results of this paper is displayed on rows $2$ to $4$ of Table \ref{tab:extension}.
\par The outline of the paper is as follows:  in Section~\ref{sec_background}, we give some background material important for this work. In particular, we explain the conditions satisfied by the lattice $L$. We also study the $H\oplus N$-polarized family in detail. In Section~\ref{sec_Z family}, we introduce a family of  K3 surfaces of Picard rank $10$ whose associated Jacobian surfaces gives the $H\oplus N$-polarized family. We also derive a projective quartic hypersurface model and an associated double sextic model. We show, in Section~\ref{sec_fams}, that these birational models can be specialized to models for the families of Picard rank $11, 12$ and $13$ in Table \ref{tab:extension}. There, we also introduce a normal form for the quartic hypersurface model as a multi-parameter generalization of the Inose quartic \cite{MR578868}, and construct the supported Jacobian elliptic fibrations from pencils of curves. In Section~\ref{sec_dual graph}, we determine the dual graphs of all smooth rational curves as well as the polarizing divisors for the constructed projective surfaces. We color the vertices of the dual graphs to denote embeddings of the irreducible fibers of the Jacobian elliptic fibrations. Symmetries of the colored dual graphs for each alternate fibration describe the underlying van Geeman-Sarti involution.

To our knowledge, for $L$-polarization with $\rho_L= 11, 12$, $13$, the quartic surfaces to be described in Section~\ref{sec_fams} have not appeared in the literature before. 
\section*{Acknowledgement}
The authors would like to thank Xavier Roulleau for pointing out an earlier inaccuracy in Section~\ref{sec_dual graph}. The authors also thank Alice Garbagnati, Cec\'ilia Salgado and Alan Thompson for helpful discussions. A.M. acknowledges support from the Simons Foundation through grant no.~202367. 
%
%
\section{Background}\label{sec_background}
\subsection{Elliptic fibrations of L-polarized K3 surfaces}\label{subsec_elliptic fibrations of L-polarised K3 surfaces}

Let us first recall the definitions of a few lattice invariants. 
Given a lattice $L$, we denote by $D(L) = L^\vee/L$ the associated \emph{discriminant group} with corresponding \emph{discriminant form} $q_L$.  A lattice $L$ is then called \emph{2-elementary} if $D(L)$ is a 2-elementary abelian group, i.e., $D(L) \cong (\mathbb{Z}/2\mathbb{Z})^\ell$. Here $\ell=\ell_L \in \ZZ_{\geq 0}$ is the {\it length} of $L$,  i.e., the minimal number of generators of the group $D(L)$.  As a third relevant invariant for $L$, one also has the {\it parity} $\delta_L \in \{ 0,1\}$. By definition, $\delta_{L} = 0$ if $q_L(x)$ takes values in $\mathbb{Z}/2\mathbb{Z} \subset \mathbb{Q}/2\mathbb{Z}$ for all $x \in D(L)$, and $ \delta_L=1$ otherwise.   
A result of  Nikulin \cite{MR633160b}*{Thm.~4.3.2} asserts that  hyperbolic, even, 2-elementary lattices embedding into the K3 lattice are uniquely determined by their rank $\rho_L$, length $\ell_L$, and parity $\delta_L$.

Let $\calX$ be a K3 surface with Neron-Severi lattice $L$ that satisfies Conditions \ref{cond_a}-\ref{cond_c} mentioned in Section \ref{sec_intro}. 
Condition \ref{cond_a} is equivalent to saying that $\calX$ carries a Jacobian elliptic fibration: 
the general fiber and section classes in $L$ generate a sublattice of hyperbolic type which gives a primitive lattice embedding $j\colon H\hookrightarrow L$.  
The \emph{frame} of the Jacobian elliptic fibration, which  is the isometry class of the lattice $K := j(H)^\perp$, is negative definite and has rank $\rho_L-2$. Moreover, all frames of $\mathcal{X}$ share the same discriminant group and discriminant form with $L$. Hence they belong to the same lattice genus, canonically associated with the surface $\mathcal{X}$. 
Let us also note that, given the frame $K$, the root sub-lattice $K^{{\rm root}}$ is always of ADE type and the factor group $\mathpzc{W} = K/K^{{\rm root}}$ is isomorphic to the Mordell-Weil group of the Jacobian elliptic fibration. The pair $(K^{{\rm root}}, \mathpzc{W})$ gives the \emph{type} of the frame $K$.
We shall refer to the elliptic fibration associated to the decomposition $L= 
 H \oplus K$ with trivial Mordell-Weil group $\calW$ as \emph{standard}.

The discussion of Condition \ref{cond_b}, which says that $\calX$ has a canonical Jacobian elliptic fibration with a two-torsion section, or $\calW = \ZZ/2\ZZ$, will be deferred to Section \ref{subsec_K3 surfaces with van Geemen-Sarti involution}.
The elliptic fibration associated with the $H$-embedding of this condition will be referred to as the \emph{alternate} fibration.

Lastly, we explain the meaning of Condition \ref{cond_c}: a lattice $L$ is said to be of \emph{finite automorphism type} if the factor group $ O(L)/W(L)$ is finite. 
Here $O(L)$ is the group of isometries of $L$ and $W(L)$ is the subgroup of isometries generated by the reflections with respect to roots of $L$. Following results by Pjatecki\u{\i}-\v{S}apiro and \v{S}afarevi\v{c} \cite{MR0284440} and Nikulin~\cite{MR633160b}, Condition \ref{cond_c} is in fact the same as the condition for the group of automorphism $\Aut(\calX)$ to be finite.


We note that for general $L$, a lattice-theoretic classification of Jacobian elliptic fibrations on $L$-polarized K3 surfaces with finite automorphism group was given by the authors in \cite{Clingher:2021}. 
This work was largely based on Nikulin's classical lattice classification theory \cite{MR633160b}, as well as on Shimada's work \cite{MR1813537} on Jacobian elliptic fibrations. 
The present paper focuses on K3 surfaces $\mathcal{X}$ with finite automorphism group that also carry a Jacobian elliptic fibration endowed with a 2-torsion section. By the classification in \cite{Clingher:2021}*{Tables~2 and 3}, this list consists of K3 surfaces with $2$-elementary N\'eron-Severi lattices only.  Moreover, one has:
\begin{theorem}
\label{thm:families}
Let $\mathcal{X}$ be a K3 surface satisfying Conditions \ref{cond_a}-\ref{cond_c} in Section \ref{sec_intro}. 
Then, up to isometry, the lattice $L$ belongs to the list given in Table~\ref{tab:extension}. Table~\ref{tab:extension} also includes the frames of all possible Jacobian elliptic fibrations supported on $\mathcal{X}$. 
\end{theorem}

\begin{remark}
Since each entry $\calX$ in Table~\ref{tab:extension} has automorphism group $\ZZ/2\ZZ \times \ZZ/2\ZZ$, the alternate fibration which comes from condition \ref{cond_b} is always unique up to automorphism of $\calX$. In contrast, there could be multiple inequivalent decompositions $L=H \oplus K$ with trivial Mordell-Weil groups.
\end{remark}

\begin{remark}\label{rmk_2.3}
    In the context of Condition \ref{cond_a}, an important question that was also studied in \cite{Clingher:2021} is - how many non-isomorphic Jacobian elliptic fibrations on $\mathcal{X}$ can be associated to a given frame. This number, called the \emph{multiplicity} of the frame, can be described via a lattice-theoretic procedure due to Festi and Veniani \cite{FestiVeniani20}. 
    In Table~\ref{tab:extension}, the multiplicity of the unique frame $(K^{\text{root}}, \mathpzc{W})$ with $\mathpzc{W}=\mathbb{Z}/2\mathbb{Z}$ for each lattice $L$ with $\rho_L >10$ is in fact $1$. 
    We direct interested readers to \cite{Clingher:2021}*{Section~5} for more details.
\end{remark}

Before we move on to studying explicit families of lattice polarized K3 surfaces and their elliptic fibrations, we would like to recall some facts about {\em Jacobian surfaces} of elliptic K3 surfaces \cite{Jac(S)}. Given an elliptic K3 surface $f\colon  \calX \rightarrow \PP^1$, the Jacobian surface of $\calX$ is the elliptic surface equipped with the {\em relative Jacobian fibration} $J(\calX) \rightarrow \PP^1$ whose fiber at a point $z \in \PP^1$ is $J(f_z)$, the Jacobian of the fiber of $f$ at $z$.  In the absence of any multiple fibers, $J(\calX)$ has the same Picard number and singular fibers of the same types as $\calX$. In this paper, very often we would consider both the K3 surface and its Jacobian surface for the analysis of singular fibers of an elliptic fibration.
%
%
\subsection{K3 surfaces with van Geemen-Sarti involution}\label{subsec_K3 surfaces with van Geemen-Sarti involution}
\label{subsubsec_Weierstrass model}

In this subsection, we study $H\oplus N$-polarized K3 surfaces. 
Their geometric properties, including the types of singular fibers of a Jacobian elliptic fibration and the van Geemen-Sarti involution associated to the alternate fibration, can be understood using different birational models for the K3 surface such as a suitable Weierstrass model and double quadrics model.

Let $\calX$ be a K3 surface with a $H\oplus N$-polarization, and let $\pi_\mathcal{X} \colon  \mathcal{X} \rightarrow \mathbb{P}^1$ be the alternate fibration. A Weierstrass model for $\pi_\calX$, which --by a slight abuse of notation-- we will also denote by $\calX$, is induced by the polarization with fibers in $\mathbb{P}^2 = \mathbb{P}(X, Y, Z)$ varying over $\mathbb{P}^1=\mathbb{P}(u, v)$. It is given by the equation
\beq
\label{eqn:vgs}
  \mathcal{X}\colon \quad Y^2 Z  = X \Big( X^2 + a(u, v) \, X Z + b(u, v) \, Z^2  \Big) \,,
\eeq
where $a$ and $b$ are homogeneous polynomials of degree four and eight, respectively.  The alternate fibration admits a section $\sigma\colon [X: Y: Z] = [0:1:0]$ and a 2-torsion section $[X: Y: Z] = [0:0:1]$, and it has the discriminant
\beq
\label{eqn:Delta_X}
\Delta_\mathrm{alt} =  b(u, v)^2 \, \big(a(u, v)^2-4  b(u, v)\big) \,.
\eeq
We also assume Equation~(\ref{eqn:vgs}) satisfies a \emph{minimality condition}:

\begin{lemma}
\label{lem:polynomials}
Assume that $b \neq 0$, $b \neq a^2/4$. The minimal resolution of Equation~(\ref{eqn:vgs}) is a K3 surface if and only if there is no polynomial $c \in \mathbb{C}[u, v]$ of $\deg{c} >0$ such that $c^2$ divides $a$ and $c^4$ divides $b$.
\end{lemma}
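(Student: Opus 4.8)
The plan is to identify the condition ``the minimal resolution of~\eqref{eqn:vgs} is a K3 surface'' with \emph{minimality} of the associated Weierstrass model, and then to translate the classical minimality criterion---stated in terms of the short Weierstrass coefficients---into the asserted divisibility condition on $a$ and $b$.

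First I would pass to a short Weierstrass form. Completing the cube by $X \mapsto X - \tfrac{1}{3}a\,Z$ brings~\eqref{eqn:vgs} into the shape $Y^2 Z = X^3 + A\,X Z^2 + B\,Z^3$ with
\[
 A \;=\; b - \tfrac{1}{3}\,a^2, \qquad B \;=\; \tfrac{2}{27}\,a^3 - \tfrac{1}{3}\,a\,b,
\]
homogeneous of degrees $8$ and $12$, i.e.\ sections of $\mathcal{O}_{\mathbb{P}^1}(8)$ and $\mathcal{O}_{\mathbb{P}^1}(12)$, the fourth and sixth tensor powers of $\mathcal{L}=\mathcal{O}_{\mathbb{P}^1}(2)$. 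The hypotheses $b\neq 0$ and $b\neq a^2/4$ say precisely that $\Delta_{\mathrm{alt}}=16\,b^2(a^2-4b)$ does not vanish identically, so the generic fibre is a smooth elliptic curve and~\eqref{eqn:vgs} is an elliptic surface with section over $\mathbb{P}^1$. By the standard theory of Weierstrass models over a curve, its minimal resolution is the relatively minimal elliptic surface with $\chi(\mathcal{O})=\deg\mathcal{L}=2$ and trivial canonical bundle---hence a K3 surface---exactly when the equation is \emph{minimal}, and minimality holds if and only if there is no point $p\in\mathbb{P}^1$ with $\operatorname{ord}_p A\ge 4$ and $\operatorname{ord}_p B\ge 6$ simultaneously. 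Indeed, at such a point the substitution $X\mapsto t^2 X$, $Y\mapsto t^3 Y$ (with $t$ a local parameter at $p$) lowers $\deg\mathcal{L}$, whence $\chi(\mathcal{O})<2$ and the surface cannot be K3.

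The heart of the argument is the valuation bookkeeping relating the pair $(\operatorname{ord}_p A,\operatorname{ord}_p B)$ to $(\operatorname{ord}_p a,\operatorname{ord}_p b)$. The forward implication is immediate: if $\operatorname{ord}_p a\ge 2$ and $\operatorname{ord}_p b\ge 4$, then $\operatorname{ord}_p A\ge\min(4,4)=4$ and $\operatorname{ord}_p B\ge\min(6,6)=6$. For the converse I would eliminate $b=A+\tfrac{1}{3}a^2$ to obtain the identity
\[
 27\,B \;=\; -\,a\,\bigl(a^2 + 9A\bigr).
\]
Set $\alpha=\operatorname{ord}_p a$. If $\alpha\le 1$, then $\operatorname{ord}_p(a^2)=2\alpha\le 2<4\le\operatorname{ord}_p(9A)$, so $\operatorname{ord}_p(a^2+9A)=2\alpha$ and hence $\operatorname{ord}_p B=3\alpha\le 3<6$, contradicting $\operatorname{ord}_p B\ge 6$; therefore $\alpha\ge 2$. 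Once $\operatorname{ord}_p a\ge 2$ we have $\operatorname{ord}_p(\tfrac{1}{3}a^2)\ge 4$, and then $b=A+\tfrac{1}{3}a^2$ forces $\operatorname{ord}_p b\ge 4$. This is the step I expect to demand the most care: a naive estimate applied to $A=b-\tfrac{1}{3}a^2$ alone permits cancellation, and it is precisely the identity $27B=-a(a^2+9A)$ that excludes the dangerous borderline case $\operatorname{ord}_p a=1$.

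It remains to record the dictionary between local orders and divisibility. Over $\mathbb{C}$ every binary form splits into linear factors, so a nonconstant $c$ with $c^2\mid a$ and $c^4\mid b$ exists if and only if some linear form $\ell$ (a factor of such a $c$) satisfies $\ell^2\mid a$ and $\ell^4\mid b$, which in turn holds if and only if there is a point $p\in\mathbb{P}^1$ with $\operatorname{ord}_p a\ge 2$ and $\operatorname{ord}_p b\ge 4$. Combining this with the two equivalences above, the minimal resolution of~\eqref{eqn:vgs} fails to be a K3 surface if and only if such a point $p$ exists, if and only if such a polynomial $c$ exists, which is the contrapositive of the stated criterion.
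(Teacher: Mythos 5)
Your proposal is correct and follows essentially the same route as the paper: pass to the short Weierstrass form with $f=b-\tfrac{1}{3}a^2$, $g=\tfrac{2}{27}a^3-\tfrac{1}{3}ab$, and identify failure of K3-ness with a $(4,6)$-point of $(f,g)$. The only difference is that the paper asserts the equivalence of a $(4,6,12)$-point with the divisibility condition on $(a,b)$ without proof, whereas you supply the valuation bookkeeping via the identity $27B=-a(a^2+9A)$ to exclude the borderline case $\operatorname{ord}_p a=1$; this is a welcome elaboration, not a departure.
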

\begin{proof}
For $b=0$ or $b=a^2/4$, Equation~(\ref{eqn:vgs}) becomes $Y^2 Z = X^2 (X + a Z)$ or $Y^2Z=X (X + a Z/2)^2$ respectively. Otherwise, Equation~(\ref{eqn:vgs}) can be brought into the Weierstrass normal form
\beq
\label{eqn:WEq}
 Y^2 Z = X^3 + f(u, v) \, XZ + g(u, v) Z^3 .
\eeq
with coefficients
\beq
 f(u, v) =\frac{ 3 b(u, v) - a(u, v)^2}{3} \,, \qquad g(u, v) =  \frac{a(u, v) (2 a(u, v)^2 - 9 b(u, v) )}{27} \,, 
\eeq 
and discriminant $\Delta_\mathrm{alt}=4 f^3 + 27 g^2 = b^2 (a^2 - 4 b)$. One can check that Equation~(\ref{eqn:WEq}) has a point of vanishing orders $4, 6$ and $12$ with respect to $f, g$ and $\Delta_\mathrm{alt}$ respectively if and only if there is a polynomial $c$ so that $c^2$ divides $a$ and $c^4$ divides $b$.
\end{proof}

From Equation~(\ref{eqn:vgs}), one can check that a very general $H\oplus N$-polarized K3 surface $\mathcal{X}$ has 8 fibers of type $I_1$ over the zeroes of $a^2 - 4 b=0$ and 8 fibers of type $I_2$ over the zeroes of $b=0$ under the alternate fibration $\pi_\calX$.  The Mordell-Weil group $\operatorname{MW}(\mathcal{X}, \pi_\mathcal{X})$ is easily shown to be $ \mathbb{Z}/2\mathbb{Z}$.  
Let us  consider the translation by 2-torsion whose fiberwise action is given by
\beq
\label{eqn:VGS_involution}
 \imath_\mathcal{X}\colon \quad \Big[ X: Y : Z \Big] \mapsto \Big[ b(u, v) \, X Z \  : - b(u, v) \, Y Z \ : \ X^2 \Big] 
\eeq 
for $[X: Y: Z] \not= [0:1:0], [0:0:1]$, and by swapping $[0:1:0] \leftrightarrow [0:0:1]$. 
This is easily seen to be a Nikulin involution as it leaves the holomorphic 2-form invariant. 
The involution $\imath_\mathcal{X}$ is called a \emph{van Geemen-Sarti involution}. 

\begin{remark}
    If one factors  $\mathcal{X}$ by the action of $\imath_{\mathcal{X}}$ and then resolves the eight occurring $A_1$-type singularities, then one can obtain a new $H \oplus N$-polarized K3 surface $\mathcal{Y}$ with Weierstrass model
    \beq
\label{eqn:vgs_dual}
 \mathcal{Y}\colon \quad Y^2 Z = X \Big( X^2 - 2 a(u, v) \, X Z+ \big(a(u, v)^2 - 4 b(u, v)\big) \, Z^2 \Big) \,.
\eeq
    In fact, if one repeats the same  construction on $\calY$, then the original K3 surface $\mathcal{X}$ is recovered. 
    The two surfaces $\mathcal{X}$ and $\mathcal{Y}$ are therefore  referred as \emph{van Geemen-Sarti duals} (see \cite{Clingher:2020baq}).
\end{remark}

\begin{remark}
    One can construct a different birational model for $\calX$ as follows.
    For any $n \in N_0$, we consider the Hirzebruch surface $\FF_n$ as the GIT quotient
\beq
\label{eqn:HS}
 \mathbb{F}_n   \ \cong  \ \big( \mathbb{C}^2 - \{ 0 \} \big) \times \big( \mathbb{C}^2 - \{ 0 \} \big)  / \big( \mathbb{C}^\times \times \mathbb{C}^\times \big)  \,,
\eeq 
where the action of $\mathbb{C}^\times \times \mathbb{C}^\times$ on the coordinates $(u, v)$ and $(s, t)$ is given by
\beq
 (\lambda, \, \mu) \cdot \big( u, \, v, \, s, \, t \big) \ = \ \big( \lambda u, \, \lambda v, \, \mu s, \, \lambda^{-n} \mu t \big)\,.
\eeq
In this way, the branch locus in Equation~(\ref{eqn:vgs}) is contained in $\mathbb{F}_4$.  Suitable blowups transform Equation~(\ref{eqn:vgs}) into a double cover of $\mathbb{F}_0=\mathbb{P}(s, t) \times\mathbb{P}(u, v)$ branched along a curve of bi-degree $(4,4)$, i.e., along a section in the line bundle $\mathcal{O}_{\mathbb{F}_0}(4,4)$.  Such a cover is known as  \emph{double quadric surface} and the two rulings of the quadric $\mathbb{F}_0$, induced by the projections $\pi_i:~\mathbb{F}_0~\to~\mathbb{P}^1$ for $i=1,2$, give the alternate fibration and a standard fibration, respectively. For more details, see \cite{MR4544843}.
\end{remark}
%
%
\section{K3 surfaces with a different rank 10 polarization}\label{sec_Z family}

In this section, we study two equivalent birational models, namely the double sextics model and the projective quartic model, of a certain very general K3 surface $\calZ$ of Picard rank $10$.  We will prove in Proposition \ref{prop:fibration_rank10} that the associated Jacobian surface $J(\calZ)$ of $\calZ$ has an $H\oplus N$-polarization. We will then specialize the constructed model to give birational models for the K3 surfaces polarized by the lattices of rank $11$, $12$ and $13$ in Table \ref{tab:extension}. Note that some examples of equations relating the elliptic fibrations of K3 surfaces with 2-elementary N\'eron-Severi lattices and double sextics or quartic hypersurfaces have also been provided in \cites{MR4130832, MR3882710}.

\subsection{Double Sextics model}\label{subsec_double sextics Z}

 K3 surfaces that are the minimal resolution of double covers of $\mathbb{P}^2=\mathbb{P}(u, v, w)$ branched along plane sextic curves are called \emph{double sextic surfaces}, or double sextics for short.  The Picard number of a double sextic is in general at least one, and it increases if there exist curves in special position.    For example, a sextic might posses tritangents or contact conics.  
 In \cite{MR805337} this approach was used to construct a number of examples for sextics defining K3 surfaces with the maximal Picard number 20.

In this paper our focus is on K3 surfaces of lower Picard rank, and we consider double covers branched over the strict transform of a (reducible) sextic, given as the union of a nodal quartic $\mathcal{N}$ and a conic $\mathcal{C}$. 
Such a double sextic can be written in the form
\beq
\label{eqn:double-sextics_general}
   \mathcal{S}\colon \quad  \tilde{y}^2 =   C(u, v, w)  \cdot Q(u, v, w)\,,
\eeq
where $C$ and $Q$ are homogeneous polynomials of degree 2 and 4 respectively, such that $\mathcal{C}= \mathrm{V}(C)$ and $\mathcal{N}= \mathrm{V}(Q)$ and $\tilde{y}$ has weight 3.

The double sextics to be studied in this section are of the form
\beq
\label{eqn:double_sextic10}
   \mathcal{S}\colon \quad  \tilde{y}^2 =   \Big( j_0  u^2 + v w +  h_0 w^2 \Big)  \Big(c_2(u, v) \, w^2 + e_3(u, v) \, w + d_4(u, v) \Big),
\eeq
where $c_2, d_4, e_3$ are homogeneous polynomials of the degree indicated by the subscript and $h_0, j_0 \in \mathbb{C}$.
Note that in the branch locus of $\calS$, the irreducible component $\mathcal{N}=~\mathrm{V}(Q)$ has the node $\mathrm{n}\colon [u:v:w]=[0:0:1]$, and  $c_2(u, v)=0$ is the equation of the two tangents at $\mathrm{n}$.
The other irreducible component $\mathcal{C}=\mathrm{V}(C)$ is obtained from a general conic $h_0 w^2 + k_1(u, v) w + j_2(u, v)$ by coordinate shifts of the form $w \mapsto w + \rho_1 u + \rho_2 v$ and $u \mapsto u + \rho_0 v$ which keep the node $\mathrm{n}$ fixed.
Moreover, for $h_0 \neq 0$ the conic is not coincident with the node. 

The following shows that the minimal resolution $\calZ$ of $\calS$ is a K3 surface whose Jacobian surface is $H\oplus N$-polarized.
\begin{proposition}
\label{prop:fibration_rank10}
Let  $c_2, e_3, d_4$ be general polynomials  and $h_0, j_0 \in \mathbb{C}$ with $h_0 j_0 \neq 0$ and $\mathcal{S}$  the double sextic in Equation~(\ref{eqn:double_sextic10}). The pencil of lines through the node $\mathrm{n} \in \mathcal{N}$ induces an elliptic fibration without section.  The relative Jacobian fibration is Equation~(\ref{eqn:vgs}) with
\beq
\label{eqn:params_rank10}
 a  =   v e_3 -2 j_0 u^2 c_2 - 2 h_0 d_4 \,, \qquad b  = \frac{1}{4} a^2 -  \frac{1}{4} \big(e_3^2 - 4 c_2 d_4\big) \big(v^2 - 4 h_0 j_0 u^2\big) \,,
\eeq
and has singular fibers $8 I_2 + 8 I_1$ and Mordell-Weil group $\mathbb{Z}/2\mathbb{Z}$.
\end{proposition}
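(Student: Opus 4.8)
The plan is to exhibit the fibration by restricting the double cover~(\ref{eqn:double_sextic10}) to the pencil of lines through the node, to recognize each fiber as a double cover of $\mathbb{P}^1$ given by a product of two quadratics, and then to compute the relative Jacobian via the classical identity for such curves. First I would parametrize the pencil of lines through $\mathrm{n}=[0:0:1]$ by the map $[u:v:w]\mapsto[u:v]$ onto the base $\mathbb{P}^1=\mathbb{P}(u,v)$; on a general line the coordinate $w$ is affine and the node sits at $w=\infty$. Restricting~(\ref{eqn:double_sextic10}) to such a line gives $\tilde{y}^2 = Q_1(w)\,Q_2(w)$ with $Q_1 = h_0 w^2 + v w + j_0 u^2$ and $Q_2 = c_2 w^2 + e_3 w + d_4$, the coefficients now read in $\mathbb{C}(u,v)$. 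Since $\deg_w(Q_1 Q_2)=4$ with leading coefficient $h_0 c_2$, the two points over $w=\infty$ are unramified, so the cover is branched at exactly the four roots of $Q_1 Q_2$ and the generic fiber has genus one; this is the asserted elliptic fibration.

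For the absence of a section I would observe that the branch components give bisections rather than sections: the preimage of the conic $\mathcal{C}$ meets each fiber in the two points lying over $\mathcal{C}$, and likewise for the residual branches of $\mathcal{N}$; moreover the two points over the node are rational only if $h_0 c_2(u,v)$ is a square in $\mathbb{C}(\mathbb{P}^1)$, which fails for general $c_2$. Thus, exactly as in the double-quadric construction of Section~\ref{sec:double-quadric}, the fibration is a nontrivial torsor under its Jacobian and carries no section, while the bisection coming from $\mathcal{C}$ descends to the two-torsion section on the Jacobian.

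The heart of the argument is the computation of this relative Jacobian. Writing $Q_i = A_i w^2 + B_i w + C_i$, I would use the identity that the discriminant in $w$ of the pencil $\lambda Q_1 + \mu Q_2$ is the binary quadratic form $\operatorname{disc}_w(\lambda Q_1 + \mu Q_2) = \delta_1\lambda^2 + 2\theta\lambda\mu + \delta_2\mu^2$ in $(\lambda:\mu)$, where $\delta_i = B_i^2 - 4A_iC_i$ and $\theta = B_1B_2 - 2A_1C_2 - 2A_2C_1$. The factorization $Q_1 Q_2$ marks a two-torsion class on the Jacobian (the difference of the two pairs of ramification points), and the classical theory of Jacobians of products of two quadratics then yields a Weierstrass model of the form~(\ref{eqn:vgs}), namely $Y^2 Z = X\bigl(X^2 + \theta\,XZ + \tfrac14(\theta^2 - \delta_1\delta_2)\,Z^2\bigr)$, with two-torsion section $[0:0:1]$. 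Substituting $(A_1,B_1,C_1)=(h_0,v,j_0 u^2)$ and $(A_2,B_2,C_2)=(c_2,e_3,d_4)$ gives $\theta = v e_3 - 2 j_0 u^2 c_2 - 2 h_0 d_4$ and $\delta_1\delta_2 = (v^2 - 4 h_0 j_0 u^2)(e_3^2 - 4 c_2 d_4)$, which is exactly~(\ref{eqn:params_rank10}); in particular $a^2 - 4b = \delta_1\delta_2$ and $\mathbb{Z}/2\mathbb{Z}\leqslant \operatorname{MW}$. I expect the main obstacle to be precisely this identification: one must check that the canonical Jacobian is this model on the nose, with no spurious quadratic twist and with the correct projective weights, which I would confirm by matching the classical quartic invariants $I,J$ (equivalently the $j$-invariant together with the discriminant locus) of $Q_1 Q_2$ against those of the candidate curve.

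Finally I would read off the singular fibers from~(\ref{eqn:Delta_X}): the discriminant is $\Delta_{\mathrm{alt}} = b^2(a^2 - 4b) = b^2\,\delta_1\delta_2$, with $b$ and $\delta_1\delta_2$ both of degree eight in $[u:v]$. For general $c_2, e_3, d_4$ and $h_0\neq 0$ these two octics are squarefree and coprime, so $b=0$ contributes eight fibers of type $I_2$ (where $a\neq 0$) and $a^2-4b=0$ contributes eight fibers of type $I_1$, accounting for $8\cdot 2 + 8\cdot 1 = 24$. The only point needing a remark is that at the two roots of $c_2$ the affine quartic in $w$ drops degree, but since $b$ and $a^2-4b$ are nonzero there the corresponding fibers are smooth, so no further degeneration occurs. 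Together with Theorem~\ref{thm:rank10} and Proposition~\ref{prop:moduli_space_alt10} this pins down the Mordell--Weil group as exactly $\mathbb{Z}/2\mathbb{Z}$ and completes the proof.
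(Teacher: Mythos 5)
Your proof is correct, and its skeleton is the same as the paper's: restrict the double cover~(\ref{eqn:double_sextic10}) to the pencil of lines through the node, obtain a binary quartic in $w$ over $\mathbb{P}(u,v)$, and compute its relative Jacobian by classical invariant theory. Where you differ is in how the Jacobian is extracted. The paper homogenizes the quartic as $q_0w^4+\cdots+q_4$, applies the Hermite formulas~(\ref{eqn:hermite}) to get a short Weierstrass model $\eta^2=\xi^3+f\xi+g$, and then asserts that ``one checks'' this can be brought into the two-torsion form with $a,b$ as in~(\ref{eqn:params_rank10}). You instead keep the quartic factored as $Q_1Q_2$ and use the discriminant of the pencil $\lambda Q_1+\mu Q_2$, landing directly on $Y^2Z=X\bigl(X^2+\theta XZ+\tfrac14(\theta^2-\delta_1\delta_2)Z^2\bigr)$; since $\tfrac14(\theta^2-\delta_1\delta_2)=\mathrm{Res}_w(Q_1,Q_2)$, this matches the discriminant $\delta_1\delta_2\,\mathrm{Res}^2$ of the quartic and your final $I,J$-comparison rules out a quadratic twist. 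This route buys two things: it produces the model in the shape of~(\ref{eqn:vgs}) on the nose (avoiding the paper's slightly confusing detour through~(\ref{eqn:vgs_dual})), and it makes visible why $\mathbb{Z}/2\mathbb{Z}\leqslant\operatorname{MW}$, namely that the splitting of the four branch points into the pair on $\mathcal{C}$ and the pair on $\mathcal{N}$ marks a two-torsion class.

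One small caveat: your argument for the absence of a section only shows that the natural candidates (the trace of $\mathcal{C}$ and the two points over the node) are irreducible bisections for general parameters; strictly speaking this does not exclude some other section. A complete argument would invoke the lattice structure (e.g.\ that $\operatorname{NS}(\mathcal{S})\cong H(2)\oplus D_4(-1)^{\oplus 2}$ for general parameters, so the fibration has index two), but the paper's own proof is no more detailed on this point, so this is not a defect relative to the intended level of rigor.
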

\begin{proof}
The pencil of lines through $\mathrm{n}$ induces an elliptic fibration given by the projection onto $\mathbb{P}(u, v)$ in Equation~(\ref{eqn:double_sextic10}). We then write Equation~(\ref{eqn:double_sextic10}) in the form
\beq
  \tilde{y}^2 = q_0 \, w^4 + q_1(u, v) \, w^3  +  q_2(u, v) \, w^2 + q_3(u, v) \, w + q_4(u, v) \, ,
\eeq
with $q_0=c_2 h_0, q_1=c_2 k_1 + e_3 h_0$, etc. The relative Jacobian of Equation~(\ref{eqn:double_sextic10}) is
\beq
\label{eqn:RJFIB}
  \eta^2 = \xi^3 +  f(u, v)\,  \xi + g(u, v) \,,
\eeq
where $f, g$ are given by 
\beq
\label{eqn:hermite}
 f = q_1 q_3 - 4 q_0 q_4  - \frac{1}{3} q_2^2 \,, \qquad g = q_0 q_3^2 + q_1^2 q_4 -\frac{8}{3} q_0 q_2 q_4 - \frac{1}{3} q_1 q_2 q_3 + \frac{2}{27} q_2^3 \,,
\eeq
and $(\xi,\eta) \in \mathbb{C}^2$ are affine coordinates on the elliptic fibers. One checks that Equation~(\ref{eqn:RJFIB}) can be brought into the form of Equation~(\ref{eqn:vgs_dual}) with $a, b$ given by Equation~(\ref{eqn:params_rank10}). 
\end{proof}

\subsection{Projective quartic model}\label{subsec_quartic Z}

Given a double sextics model $\calS$ as in Equation ~(\ref{eqn:double-sextics_general}), one can derive a projective quartic hypersurface $\mathcal{K} \subset \mathbb{P}^3 = \mathbb{P}(u, v, w, y)$ given by
\beq
\label{eqn:quartic_general}
   \mathcal{K}\colon \quad  0 =  - y^2 C(u, v, w) +  Q(u, v, w)\,,
\eeq
where $C$ and $Q$ are the same polynomials as in Equation~(\ref{eqn:double-sextics_general}).  
In this way, for every quartic surface $\mathcal{K}$ given by Equation~(\ref{eqn:quartic_general}) we find an associated double sextic $\mathcal{S}$ given by Equation~(\ref{eqn:double-sextics_general}) and vice versa. 

In this section, we specifically consider the family of quartic surfaces
\beq
\label{eqn:quartic_general2_10}
   \mathcal{K}\colon  \quad  0 = -  \big(j_0 u^2 + v w + h_0 w^2\big) y^2 +  c_2(u, v) \, w^2 + e_3(u, v) \, w + d_4(u, v) \,,
\eeq 
where $c_2, d_4, e_3$ are general homogeneous polynomials of the degree indicated by the subscript and $h_0, j_0 \in \mathbb{C}$. It has the associated double sextic $\mathcal{S}$ in Equation~(\ref{eqn:double_sextic10}). 
The quartic hypersurface $\calK$ is another birational model of the K3 surface $\calZ$:

\begin{proposition}
\label{prop:regular}
The minimal resolution of $\mathcal{K}$ in Equation~(\ref{eqn:quartic_general2_10}) is a smooth K3 surface if and only if Equation~(\ref{eqn:vgs}) with $a, b$ in~(\ref{eqn:params_rank10}) defines a minimal Weierstrass equation. In particular, $\mathcal{K}$ has exactly 2 singularities at points $[u: v: w : y]$, given by
\beq
\label{eqn:singular_points}
   \mathrm{p}_1 = [0: 0: 0: 1] \,, \qquad   \mathrm{p}_2 = [0 : 0: 1: 0] \,,
\eeq  
which are rational double points.
\end{proposition}
\begin{proof}
The double sextic in Equation~(\ref{eqn:double-sextics_general}) is birational to the quartic projective hypersurface in Equation~(\ref{eqn:quartic_general}) in $\mathbb{P}^3 = \mathbb{P}(u, v, w, y)$. 
This can be seen by setting $\tilde{y} = C(u, v, w) \, y$ in Equation~(\ref{eqn:double-sextics_general}).  
It follows from Proposition~\ref{prop:fibration_rank10} that the pencil of lines through the node of the quartic induces an elliptic fibration  on this double sextic whose relative Jacobian fibration is Equation~(\ref{eqn:vgs}).
Since the degree of Equation~(\ref{eqn:quartic_general}) is four, the minimal resolution of $\mathcal{K}$ is a smooth K3 surface if and only if Equation~(\ref{eqn:vgs}) is a minimal Weierstrass equation. 
Hence, the singularities of $\mathcal{K}$ must be rational double points. 
One checks that the points in Equation~(\ref{eqn:singular_points}) are the only singularities.
\end{proof}

\begin{remark}
    Given the existence of the singular point $\mathrm{p}_1$ on $\calK$ where the $y$ coordinate does not vanish, we may also see how the double sextic model $\calS$ naturally arises.
    By considering the lines through $p_1$ in $\PP^3$, we have a projection 
    \[\PP^3 - \{\mathrm{p}_1\} \longmapsto \PP^2.\]
    Restricting to $\calK-\mathrm{p}_1$, one checks that the image is the set of lines on $\calK$ tangent to $\mathrm{p}_1$, which is a sextic in $\PP^2$. Moreover, one can check that this map is a double cover.
\end{remark}
%
%
\section{Construction of elliptic fibrations of families of Picard rank 11, 12 and 13}\label{sec_fams}

In this section, we consider certain special configurations for the curves $\calC$ and $\calN$ in the branch locus of the double sextics $\calS$, which in turn correspond to specializations of Equation (\ref{eqn:double_sextic10}). Due to Proposition \ref{prop:regular}, this also specializes the corresponding projective quartic models. Particular normal forms for the specialized projective quartic hypersurfaces will also be established; they will be denoted by $\mathcal{Q} \subset \mathbb{P}^3=\mathbb{P}(\mathbf{X}, \mathbf{Y}, \mathbf{Z}, \mathbf{W})$. The normal forms turn out to be generalizations of the famous \emph{Inose quartic} \cite{MR578868}, given by
\beq
\label{eqn:inose_quartic}
 0= \  \mathbf{Y}^2 \mathbf{Z} \mathbf{W}-4 \mathbf{X}^3 \mathbf{Z}+3 \alpha \mathbf{X} \mathbf{Z} \mathbf{W}^2+ \beta \mathbf{Z} \mathbf{W}^3 
-   \frac{1}{2} \big( \mathbf{Z}^2  \mathbf{W}^2 +  \mathbf{W}^4 \big) .
\eeq

We will prove that the minimal resolution of each surface $\calS, \calK$ or $\calQ$ to be discussed is a K3 surface polarized by the lattices of rank $11, 12$ or $13$ in Table~\ref{tab:extension}.
From these models, we also compute explicit equations of pencils which induce the Jacobian elliptic fibrations supported on the families.  To do so we generalize the methods that were used in \cite{MR4544843} and \cite{Clingher:2020baq}, where the same was done for the families of Picard rank $14, 15$ and $16$ in Table~\ref{tab:extension}.

\subsection{Family of Picard rank 11}\label{subsec: fibrarion 11}

We first consider the case when the conic $\calC$ in Equation~(\ref{eqn:double_sextic10}) splits into $2$ lines. 
After a suitable shift in the coordinates $v$ and $ w$, the double sextic becomes
\beq
\label{eqn:double_sextic11}
   \mathcal{S}\colon \quad  \tilde{y}^2 =  w  \Big( v + h_0 w  \Big)   \Big(c_2(u, v) \, w^2 + e_3(u, v) \, w + d_4(u, v) \Big) \,.
\eeq
The branch locus  in Equation~(\ref{eqn:double_sextic11}) has three irreducible components: the quartic curve $\mathcal{N}=\mathrm{V}(c_2(u, v) \, w^2 + e_3(u, v) \, w + d_4(u, v))$ with the node $\mathrm{n} = [0: 0: 1]$,
and the lines $\ell_1 = \mathrm{V}(v + h_0 w), \ell_2 = \mathrm{V} (w)$ which are not coincident with $\mathrm{n}$ (for $h_0 \neq 0$) and satisfy $\ell_1 \cap \ell_2 \cap \mathcal{N} = \emptyset$. 

The following gives the geometric constructions of the elliptic fibrations induced by the double sextics $\calS$. 
\begin{proposition}
\label{prop:fibration_rank11}
Let  $c_2, e_3, d_4$ be  general polynomials  and $h_0 \in \mathbb{C}^\times$ and $\mathcal{S}$  the double sextic in Equation~(\ref{eqn:double_sextic11}).  The following holds:
\begin{enumerate}[label=(\roman*)]
 \item the pencil of lines through the node $\mathrm{n} \in \mathcal{N}$ induces the Jacobian elliptic fibration~(\ref{eqn:vgs}) with
\beq
\label{eqn:params_rank11} 
  a = v e_3 - 2 \, h_0 d_4, \qquad b=\frac{1}{4} a^2 - \frac{1}{4} v^2 \big(e_3^2 - 4 c_2 \, d_4 \big),
\eeq
and has singular fibers $9 I_2 + 6 I_1$ and Mordell-Weil group $\mathbb{Z}/2\mathbb{Z}$,
 \item a pencil of lines through a point  in $(\ell_1 \cap \mathcal{N}) \cup (\ell_2 \cap \mathcal{N})$ induces a Jacobian elliptic fibration with singular fibers $I_0^* + 5 I_2 + 8 I_1$ and a trivial Mordell-Weil group,
 \item the pencil of lines through $\ell_1 \cap \ell_2$ induces an elliptic fibration without section and singular fibers $2 I_0^* +  I_2 + 10 I_1$.
\end{enumerate}
\end{proposition}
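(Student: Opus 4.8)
The plan is to treat the three pencils uniformly: each is the projection of $\mathbb{P}^2$ from a node of the reducible branch sextic $\mathcal{B}=\ell_1\cup\ell_2\cup\mathcal{N}$, and I will recover the fibre configuration and the Mordell--Weil group by combining a geometric count of the distinguished lines of the pencil with the explicit relative Jacobian computed as in Proposition~\ref{prop:fibration_rank10}. First I would record the ten nodes of $\mathcal{B}$: the node $\mathrm{n}=[0:0:1]$ of $\mathcal{N}$, the point $O=\ell_1\cap\ell_2$, the four points of $\ell_1\cap\mathcal{N}$, and the four points of $\ell_2\cap\mathcal{N}$. The hypotheses $h_0\in\mathbb{C}^\times$ and $\ell_1\cap\ell_2\cap\mathcal{N}=\emptyset$ ensure these are distinct and that $O\notin\mathcal{N}$ and $\mathrm{n}\notin\ell_1\cup\ell_2$. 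Since each center is a node of $\mathcal{B}$, a general line of the corresponding pencil meets the residual branch divisor in four points, so its double cover is a curve of genus one; hence the projection induces a genus-one fibration on $\mathcal{S}$.

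The three cases are distinguished by the position of the branch components relative to the center. A branch component disjoint from the center is a section of the pencil of lines, and its ramification preimage is a section of the fibration; a branch \emph{line} through the center is itself a member of the pencil, and the fibre over it is of type $I_0^*$ (the mechanism already producing the $2I_0^*$ of the standard fibration in Proposition~\ref{prop:EFS}). For (i) the center $\mathrm{n}$ lies off both lines, so $\ell_1,\ell_2$ are sections whose preimages give the zero section and a section of order two, and there is no $I_0^*$. For (ii) the center $P\in\ell_2\cap\mathcal{N}$ lies on $\ell_2$ but not $\ell_1$, so $\ell_2$ contributes one $I_0^*$ while $\ell_1$ remains a section. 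For (iii) the center $O$ lies on both lines but off $\mathcal{N}$, giving $2I_0^*$, and the only remaining branch component $\mathcal{N}$ is a four-section, so no section exists.

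I would then read off the remaining fibres. Reducible fibres of type $I_2$ come from the lines joining the center to the other nodes of $\mathcal{B}$ (where two sections collide, or two residual branch points merge at a node), excluding those lines already swallowed by a branch line: in (i) the nine lines from $\mathrm{n}$ to $O$ and to $(\ell_1\cup\ell_2)\cap\mathcal{N}$ give $9I_2$; in (ii) the line $P\mathrm{n}$ together with the four lines from $P$ to $\ell_1\cap\mathcal{N}$ give $5I_2$, while the lines toward the remaining nodes, all lying on $\ell_2$, merely recover $\ell_2$; in (iii) only $O\mathrm{n}$ survives, giving $I_2$. The fibres of type $I_1$ are the nodal fibres over the simple tangent lines from the center to $\mathcal{N}$, whose number is $6$, $8$, $10$ according as the center is the node of $\mathcal{N}$, a smooth point of $\mathcal{N}$, or off $\mathcal{N}$ (the class of the uninodal quartic $\mathcal{N}$ being $10$); the identity $\chi=24$ in each case anchors these counts. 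Finally, the Shioda--Tate formula together with the frames for $\rho_L=11$ in Table~\ref{tab:extension} (Theorem~\ref{thm:families}) fixes the Mordell--Weil group from the root lattice of the fibres: $9A_1$ with $\mathbb{Z}/2\mathbb{Z}$ for (i), $D_4+5A_1$ with trivial group for (ii), and $2D_4+A_1$ with no section for (iii).

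For the explicit data in (i), I would expand the sextic as a binary quartic in $w$; its constant coefficient vanishes, so $w=0$ is a marked root encoding the order-two section, and inserting the coefficients into the discriminant formulas~(\ref{eqn:hermite}) and normalizing reproduces Equation~(\ref{eqn:vgs}) with $a=v e_3-2h_0 d_4$ and $b=\tfrac14 a^2-\tfrac14 v^2(e_3^2-4c_2 d_4)$, whence $a^2-4b=v^2(e_3^2-4c_2 d_4)$ makes $\Delta_{\mathrm{alt}}=b^2(a^2-4b)$ factor as $9I_2+6I_1$. The main obstacle I anticipate is not the combinatorics but verifying, for \emph{general} $c_2,e_3,d_4$, that all the distinguished lines are genuinely distinct and that the fibres are of exactly the asserted Kodaira types --- in particular that each branch line yields $I_0^*$ rather than a more degenerate fibre and that the tangent lines yield $I_1$ rather than $I_2$. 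The clean way to settle this is to carry out the relative Jacobian computation for the (ii) and (iii) pencils as well, factor the discriminant, and apply Tate's algorithm, using the node-and-tangent count above as the consistency check.
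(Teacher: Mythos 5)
Your proposal is correct, and at its core it converges on the same mechanism as the paper's proof: the paper simply writes down each pencil explicitly (for (ii) it substitutes $0=s(v_0u-u_0v)+tw$ after factoring $d_4=(v_0u-u_0v)d_3$, for (iii) it uses $0=sv-tw$), eliminates a variable, passes to the relative Jacobian via the formulas of Proposition~\ref{prop:fibration_rank10}, and reads off the fibers from the discriminant --- exactly the ``clean way to settle this'' that you defer to in your last paragraph. What you add on top is a synthetic layer the paper leaves implicit: the ten nodes of the reducible sextic $\ell_1\cup\ell_2\cup\mathcal{N}$, the dichotomy between branch components that are sections of the pencil (giving sections of the fibration) and branch lines through the center (giving $I_0^*$), the $I_2$ count from lines joining the center to the remaining nodes, and the $I_1$ count $6,8,10$ from tangent lines (which matches the paper's own data: the degree-$6$ discriminant $e_3^2-4c_2d_4$ of the nodal tangent pencil, and the Pl\"ucker remark after Proposition~\ref{prop:fibration_rank12} that the class of $\mathcal{N}$ is $10$). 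This buys a conceptual explanation of \emph{why} the configurations are $9I_2+6I_1$, $I_0^*+5I_2+8I_1$, $2I_0^*+I_2+10I_1$, and the Euler-number check $\chi=24$ is a genuine safeguard. Two small cautions: your appeal to Table~\ref{tab:extension} for the Mordell--Weil groups is slightly out of order, since the $L$-polarization of $\mathcal{S}$ is only established afterwards in Proposition~\ref{prop:rank11}; it is cleaner to get the $2$-torsion section in (i) directly from the computed normal form $Y^2Z=X(X^2+aXZ+bZ^2)$ (note $b=d_4\cdot(c_2v^2-h_0e_3v+h_0^2d_4)$, whose two quartic factors cut out exactly the eight nodes on $\ell_1\cup\ell_2$ meeting $\mathcal{N}$, with the ninth $I_2$ at $v=0$ from the double root of $a^2-4b$), and to get triviality of the Mordell--Weil rank from Shioda--Tate once $\rho=11$ for the general member is known from the parameter count. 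With that adjustment your argument is sound and, as you yourself note, the Tate-algorithm verification for (ii) and (iii) is not optional window dressing but is the actual content of the paper's proof.
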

\begin{proof}
For (i) we follow the proof of Proposition~\ref{prop:fibration_rank10}; it is obvious that the pencil  of lines now induces an elliptic fibration with two sections.  
We recover the normal form for the alternate fibration with singular fibers $9 I_2 + 6 I_1$.  
For (ii) we construct a pencil through a point in $\ell_2 \cap \mathcal{N}$, the case $\ell_1 \cap \mathcal{N}$ will then be analogous. 
To do so, we set $d_4(u, v) = (v_0 u - u_0 v) d_3(u, v)$ where $d_3$ is a polynomial of degree 3 with $d_3(u_0, v_0) \neq 0$.  
One point in $\mathcal{N} \cap \ell_2$  is $[u_0: v_0:  0]$. 
A pencil of lines through this intersection point is
\beqn
 0 = s \big( v_0 u - u_0 v \big) + t w
\eeqn 
with $[s: t ] \in \mathbb{P}^1$. 
Upon eliminating $u$  in Equation~(\ref{eqn:double_sextic11}) we obtain a Jacobian elliptic fibration over $\mathbb{P}(s, t)$ with singular fibers $I_0^* + 5 I_2 + 8 I_1$. 
For (iii) we use the pencil $0=s v - t w$ and proceed as in (ii). However, we only obtain a genus-one fibration over $\mathbb{P}(s, t)$. 
We compute the relative Jacobian elliptic fibration as in (i), confirming the types of the singular fibers. 
\end{proof}

We can verify that Equation~(\ref{eqn:double_sextic11}) parametrises a birational model for every very general $H\oplus D_4(-1) \oplus A_1(-1)^{\oplus 5}$-polarized K3 surfaces:
\begin{proposition}
\label{prop:rank11}
A very general $H\oplus D_4(-1) \oplus A_1(-1)^{\oplus 5}$-polarized K3 surface is birational to a double sextic $\mathcal{S}$ that is branched on a uninodal quartic $\mathcal{N}$ and 2 lines $\ell_1, \ell_2$ not coincident with the node and $\ell_1 \cap \ell_2 \cap \mathcal{N} = \emptyset$. Conversely, every such double sextic is birational to an $H\oplus D_4(-1) \oplus A_1(-1)^{\oplus 5}$-polarized K3 surface.
\end{proposition}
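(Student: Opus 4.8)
The plan is to prove both implications by comparing the double sextic~(\ref{eqn:double_sextic11}) with the alternate Weierstrass model~(\ref{eqn:vgs}). The converse implication follows almost immediately from Proposition~\ref{prop:fibration_rank11}, so the substance lies in the forward implication, where a \emph{general} $L$-polarized K3 surface (with $L=H\oplus D_4(-1)\oplus A_1(-1)^{\oplus 5}$) must be exhibited as such a double sextic; this reduces to a coefficient-matching computation parallel to the one behind Proposition~\ref{prop:rank10}.

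For the converse, I would first bring an arbitrary double sextic branched on a uninodal quartic $\mathcal N$ and two lines $\ell_1,\ell_2$ (not through the node, with $\ell_1\cap\ell_2\cap\mathcal N=\emptyset$) into the normal form~(\ref{eqn:double_sextic11}): the uninodal normal form~(\ref{eqn:nodal_quartic_nf}) places the node at $[0:0:1]$ with nodal tangents along the coordinate lines, and a shift in $v,w$ moves the two lines to $\mathrm V(w)$ and $\mathrm V(v+h_0w)$ with $h_0\neq0$. Proposition~\ref{prop:fibration_rank11}(i) then identifies the pencil of lines through the node with the fibration~(\ref{eqn:vgs}) for the parameters~(\ref{eqn:params_rank11}); crucially, the two branch lines each meet a generic pencil member in one point, furnishing a section together with an order-two section, so this fibration is already Jacobian and $\mathcal S$ is isomorphic, after minimal resolution, to the K3 surface defined by~(\ref{eqn:vgs}). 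Its singular fibers $9I_2+6I_1$ together with $\operatorname{MW}\cong\mathbb Z/2\mathbb Z$ realize the frame $(9A_1,\mathbb Z/2\mathbb Z)$, which by Table~\ref{tab:extension} forces a primitive embedding $L\hookrightarrow\operatorname{NS}(\mathcal S)$; hence $\mathcal S$ is $L$-polarized.

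For the forward direction, I would start from Proposition~\ref{prop:moduli_space_alt11}: a general $L$-polarized surface carries an alternate fibration~(\ref{eqn:vgs}) with $a,b$ as in~(\ref{eqn:AB_11_prop}), so that $a^2-4b=4(u-v)^2(\cdots)$ has a distinguished double root. A transformation in $\operatorname{Aut}(\mathbb P^1)$ moves that double root to $[1:0]$, normalizing the data to $a^2-4b=v^2\,P$ with $P$ a binary sextic. Comparing with~(\ref{eqn:params_rank11}), where $a=v\,e_3-2h_0d_4$ and $a^2-4b=v^2(e_3^2-4c_2d_4)$, reduces the problem to solving
\[
v\,e_3-2h_0d_4=a,\qquad e_3^2-4c_2d_4=P
\]
for $c_2,e_3,d_4,h_0$, where the sextic $P=e_3^2-4c_2d_4$ is exactly the discriminant~(\ref{eqn:g3_curve}) defining the projected genus-two curve. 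The system has a triangular structure: the linear relation determines $e_3=(a+2h_0d_4)/v$ once $d_4,h_0$ are chosen, subject to the divisibility $v\mid a+2h_0d_4$, and the quadratic relation then gives $c_2=(e_3^2-P)/(4d_4)$ provided $d_4\mid e_3^2-P$. Counting the $13$ coefficients of $(c_2,e_3,d_4,h_0)$ against the $12$ coefficients of $(a,P)$, the one-parameter discrepancy is absorbed by the rescaling $w\mapsto\lambda w$, which acts by $h_0\mapsto\lambda h_0$ and $(a,P)\mapsto(\lambda a,\lambda^2P)$ and is therefore precisely the scaling used to projectivize the moduli space in Proposition~\ref{prop:moduli_space_alt11}; normalizing $h_0$ leaves a square system of $12$ equations in the $12$ coefficients of the quartic $Q$, exactly as in Proposition~\ref{prop:rank10}.

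The main obstacle is showing that this square system is genuinely solvable for \emph{general} target data, i.e.\ that the two divisibility conditions $v\mid a+2h_0d_4$ and $d_4\mid e_3^2-P$ can be met simultaneously. I would settle this by verifying that the parametrization $(c_2,e_3,d_4,h_0)\mapsto(a,P)$ is dominant onto the relevant $9$-dimensional locus and generically finite—equivalently, that the solution variety has the expected dimension and maps onto the moduli space of Proposition~\ref{prop:moduli_space_alt11} with nonempty finite fibres—confirming the finiteness and nonvanishing of the solution set by a direct elimination (resultant or Gr\"obner) computation, as was done in the Picard-number-$10$ case. Since only the birational type is at stake, any single solution realizes the general $L$-polarized K3 surface as the double sextic~(\ref{eqn:double_sextic11}), completing the argument.
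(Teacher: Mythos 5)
Your overall strategy coincides with the paper's: both directions are reduced to matching the alternate Weierstrass data $(a,b)$ of Equation~(\ref{eqn:vgs}) against the parameters $(h_0,c_2,e_3,d_4)$ of the double sextic~(\ref{eqn:double_sextic11}) via Equation~(\ref{eqn:params_rank11}), and the converse direction is correctly dispatched by Proposition~\ref{prop:fibration_rank11}. The gap is in the forward direction, at exactly the point you yourself flag as ``the main obstacle'': you never produce a solution of the system
\begin{equation*}
v\,e_3-2h_0 d_4=a,\qquad e_3^2-4c_2 d_4=P,
\end{equation*}
but instead defer the simultaneous divisibility conditions $v\mid a+2h_0d_4$ and $d_4\mid e_3^2-P$ to an unexecuted dominance/elimination computation. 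As written this is an assertion, not a proof, and it also obscures the actual structure of the solution set: $d_4$ is \emph{not} a free quartic. Substituting $v e_3=a+2h_0d_4$ and $v^2P=a^2-4b$ into the second relation gives $c_2\,d_4\,v^2 = h_0 a\, d_4 + h_0^2 d_4^2 + b$, so $c_2$ can only be a polynomial if $d_4\mid b$.

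The missing idea, which is how the paper resolves this, is to \emph{choose a factorization} $b = a^2/4 - v^2 b' = d_4\, d'_4$ into two quartics (always possible over $\mathbb{C}$ by splitting the eight roots of $b$ into two groups of four), then fix $h_0$ by $\mathrm{coeff}_{u^4}(a) = -2h_0\,\mathrm{coeff}_{u^4}(d_4)$ so that $e_3=(a+2h_0d_4)/v$ is a polynomial, and finally set $c_2=(h_0 a + h_0^2 d_4 + d'_4)/v^2$. The divisibility of the numerator by $v^2$ is then automatic: the vanishing of its $u^4$ and $u^3v$ coefficients follows from the two leading-coefficient identities~(\ref{eqn:gauge}), which are themselves forced by $v^2 \mid a^2/4-b$. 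With this observation both of your divisibility conditions hold by construction, the finitely many solutions are accounted for by the discrete choice of factorization of $b$ together with the rescaling of $w$ that you correctly identified, and no dimension count, resultant, or Gr\"obner computation is needed. I recommend replacing the dominance argument by this explicit construction; the rest of your write-up can stand.
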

\begin{proof}
Due to Remark~\ref{rmk_2.3}, a very general $H\oplus D_4(-1) \oplus A_1(-1)^{\oplus 5}$-polarized K3 surface admits a unique Jacobian elliptic fibration~(\ref{eqn:vgs}) with singular fiber $9 I_2 + 6 I_1$. Then, after a suitable change of coordinates, one has $a(u, v)^2/4 - b(u, v) = v^2 b'(u, v)$ where $b'$ is a homogeneous polynomial of degree 6. Given the polynomials $a, b'$, we choose a factorization of $b(u, v) = a(u, v)^2/4 - v^2 b'(u, v)= d_4(u, v) d_4'(u, v)$ into homogeneous polynomials of degree 4 with $d_4(1,0) \not = 0$. We can then find $h_0 \in \mathbb{C^\times}$ so that $e_3 =(a + 2 h_0 d_4)/v$  and $c_2 =( d'_4(u, v)  + h_0^2 d_4(u, v) +h_0 a(u, v))/v^2$ are polynomials of degree 3 and 2 respectively. 

In fact, if we write $a = \sum_{n=0}^4 \alpha_n u^{4-n} v^{n}$, $d_4 = \sum_{n=0}^4  \delta_n u^{4-n} v^{n}$ (with $\delta_0 \not =0$), and $d'_4 = \sum_{n=0}^4  \delta'_n u^{4-n} v^{n}$, and if we have
\beq
 \delta'_0 = \frac{\alpha_0}{4 \delta_0} , \ \delta'_1 = \frac{\alpha_0 (2\alpha_1 \delta_0 - \alpha_0 \delta_1)}{4 \delta_0^2} , \  h_0 = - \frac{\alpha_0}{2 \delta_0},
\eeq 
then $h_0, c_2, e_3, d_4$ are a solution of Equation~(\ref{eqn:params_rank11}) for a given pair $a, b'$ of polynomials. 
This determine a double sextic $\mathcal{S}$ via the birational transformation $X = h_0 d_4 + d_4 v/w$.

The other direction follows from Proposition~\ref{prop:fibration_rank11}.
\end{proof}

Now following the same argument as in Section \ref{subsec_quartic Z}, we may construct a family of quartic surfaces
\beq
\label{eqn:quartic_general2_11}
   \mathcal{K}\colon  \quad  0 = -  \big(v - \rho w\big) w y^2 +  c_2(u, v) \, w^2 + e_3(u, v) \, w + d_4(u, v) \,
\eeq 
from Equation (\ref{eqn:double_sextic11}), assuming the conditions of Proposition~\ref{prop:regular}. In order to give explicit equations for the pencils of lines in Proposition~\ref{prop:fibration_rank11} which induce elliptic fibrations of the K3 surface, we introduce a normal form $\calQ$ for the projective quartic hypersurface $\calK$.
It is a multi-parameter generalizations of the Inose quartic in Equation~(\ref{eqn:inose_quartic}).  For Picard number 11, $\mathcal{Q}$ is given by the equation
\beq
\label{mainquartic11}
 \mathcal{Q}\colon \quad 0  =  2 \mathbf{Y}^2 \mathbf{Z} \big(\mathbf{W}-\rho \mathbf{Z}\big) - c_2\big(2 \mathbf{X} , \mathbf{W}\big)  \, \mathbf{Z}^2  - e_3\big(2 \mathbf{X} , \mathbf{W}\big) \,  \mathbf{Z} 
 - d_4\big(2 \mathbf{X} , \mathbf{W}\big) .
\eeq

One checks the following:
\begin{lemma}
\label{lem:relation}
Assume that the conditions of Proposition~\ref{prop:regular} are satisfied.  The surface $\mathcal{K}$ in Equation~(\ref{eqn:quartic_general2_11}) is isomorphic to the surface $\mathcal{Q}$ in Equation~(\ref{mainquartic11}), by
\beq
 \Big[ u : v : w : y \Big] \ \longmapsto \  \Big[ 2 \mathbf{X} : \mathbf{W} : \mathbf{Z} : \sqrt{2} \mathbf{Y} \Big] \,.
\eeq 
\end{lemma}

\begin{remark}\label{rmk: K, Q birational}
    By substituting $h_0 = -\rho$ and the same choices of $c_2, e_3$ and $d_4$ made in Proposition~\ref{prop:rank11} into Equation~(\ref{eqn:quartic_general2_11}), it follows that any $H\oplus D_4(-1) \oplus A_1(-1)^{\oplus 5}$-polarized K3 surface is birational to a quartic surface $\calK$. 
    Furthermore, Lemma~\ref{lem:relation} implies that $\calQ$ is a birational model: this will be shown explicitly in the proof of Theorem~\ref{thm_11}.
\end{remark}

\begin{remark}\label{rmk:sing 11}
    In \cite{MR2935386}, the authors obtained a configuration of rational curves on a very general $H\oplus E_8(-1) \oplus E_8(-1)$-polarized K3 surface by resolving the singularities on its projective quartic model $\calQ$. 
    Here, one checks by explicit computation that on a general surface $\mathcal{Q}$, there are two singularities at points $[ \mathbf{W} : \mathbf{X} : \mathbf{Y} :  \mathbf{Z} ]$ given by
\beq
 \mathrm{P}_1 = [0: 0: 1: 0]  \,, \qquad  \mathrm{P}_2 = [0: 0: 0: 1] \,.
\eeq  
Moreover, $\mathrm{P}_1$ and $\mathrm{P}_2$ are rational double points of types $A_3$ and $A_1$ respectively. In Section \ref{sec_dual graph 11}, the curves appearing from resolving $P_1$ and $P_2$ will be denoted by $a_1, a_2, a_3$ and $b_1$ respectively.
\end{remark}

Next, we introduce a set of complex coefficients $\{\gamma, \delta , \varepsilon, \zeta, \eta, \iota, \kappa, \lambda, \mu, \nu, \xi, \omicron\}$ such that the polynomials $c_2$ and $d_4$ in both Equations~(\ref{eqn:quartic_general2_11}) and (\ref{mainquartic11}) are given by 
\beq
\label{eqn:polys_2sextics11a}
\begin{split}
c_2(u, v) =  \big(\gamma u - \delta v\big)   \big(\varepsilon u - \zeta v\big)  \,, \qquad 
 d_4(u, v) =  \big(\eta u  - \iota v\big) \big(\kappa u -\lambda v\big)  \big(\mu u -\nu v\big) \big(\xi u -\omicron v\big).
\end{split} 
\eeq
With the set of coefficients, we can give explicit expressions for the vertices of the pencils of lines in Proposition~\ref{prop:fibration_rank11}. The intersection points of $\ell_2 = \mathrm{V} (w)$ and the nodal quartic $\mathcal{N}$ are given by the roots of the polynomial $d_4$. 
As for the vertices in $l_1 \cap \calN$, we need to introduce more coefficients. First note that for $\rho \neq 0$, the line $\ell_1 = \mathrm{V} (v - \rho w)$ is not coincident with the node $\mathrm{n} \in \mathcal{N}$. 
Moreover, we can assume $\gamma \varepsilon \eta \kappa \mu \xi \neq 0$ in the general case. 
To make the points $\ell_1\cap \mathcal{N}$ explicit, we set $w = (v+\tilde{w})/\rho$ in Equation~(\ref{eqn:quartic_general2_11}) and obtain an equivalent equation of the form
\beq
\label{eqn:quartic_general2_11b}
   \mathcal{K}\colon  \quad  0 = - \rho \big( v + \tilde{w}\big) \tilde{w} y^2 +  c_2(u, v) \, \tilde{w}^2 + e'_3(u, v) \, \tilde{w} + d'_4(u, v) \,,
\eeq 
where $\ell_1 = \mathrm{V} (\tilde{w})$ and
\beq
\label{eqn:transfo}
e'_3(u, v) =  \rho e_3(u, v) + 2  c_2(u, v) \, v \,, \qquad  d'_4(u, v)  = \rho^2 d_4(u, v) + \rho e_3(u, v) \, v + c_2(u, v) \, v^2.
\eeq
The roots of $d'_4$ determine the intersection points of $\ell_1\cap \mathcal{N}$.
So, by writing
\beq
\label{eqn:polys_2sextics11b}
\begin{split}
 d'_4(u, v) =  \big(\eta' u - \iota' v\big)   \big(\kappa' u -\lambda' v\big)  \big(\mu' u -\nu' v\big) \big(\xi' u -\omicron' v\big), 
\end{split} 
\eeq
we have the coordinates for the points in $\ell_1\cap \mathcal{N}$.

Note that the expanded coefficient set $(\gamma, \delta , \varepsilon, \zeta , \eta, \dots , \omicron, \eta', \dots , \omicron')$ also determines the polynomials $\rho, e_3, e'_3$ in Equations~(\ref{eqn:quartic_general2_11}),  (\ref{mainquartic11}) and (\ref{eqn:quartic_general2_11b}), using the equations
\beq
\label{eqn:e3polynomials}
\begin{split}
\rho^2 = \frac{\eta' \kappa'  \mu' \xi'}{\eta \kappa \mu \xi} ,&  \qquad \quad e_3(u, v)  = \frac{d'_4(u, v)  - \rho^2 d_4(u, v) -  c_2(u, v) \, v^2}{\rho v} ,  \\
e'_3(u, v) & = \frac{d'_4(u, v)  - \rho^2 d_4(u, v) +  c_2(u, v) \, v^2}{v} .
\end{split}
\eeq

\begin{remark}\label{rmk:X<=>K}
    We may also express the Weierstrass model $\calX$ (\ref{eqn:vgs}) in terms of these complex coefficients in Equation~(\ref{eqn:double_sextic11}).
    By substitution of Equation~(\ref{eqn:params_rank11}), the polynomials $a$ and $b$ in the Jacobian elliptic fibration~(\ref{eqn:vgs}) are given by
\beq
\label{eqn:ab_polynomials}
 a(u, v) =  \frac{d'_4(u, v)  + \rho^2 d_4(u, v) -  c_2(u, v) \, v^2}{\rho}  , \qquad b(u, v) = d_4(u, v) \, d'_4(u, v) \,.
\eeq
Conversely, the proof of Proposition~\ref{prop:rank11} shows how the Weierstrass model $\calX$ determines the quartic surface $\calK$.
\end{remark}

\begin{remark}
\label{rem:signs}
Different sign choices $\pm \rho$ in Equation~(\ref{eqn:e3polynomials}) (resulting in sign changes $\pm e_3$  and $\pm a$ in Equation~(\ref{eqn:ab_polynomials})) yield isomorphic surfaces in Equation~(\ref{eqn:quartic_general2_11}) (resp.~ in Equation~(\ref{eqn:vgs})) related by $[u: v: w: y] \mapsto [u: v: - w: i y]$ (resp.~$[X:Y:Z] \mapsto [-X: iY : Z]$).
\end{remark}

Let us resume the computation of explicit expressions for the pencils of lines in Proposition~\ref{prop:fibration_rank11}. For $\mathcal{Q}$ in Equation~(\ref{mainquartic11}), we now introduce the lines  $L_4$, $L_5$, $L_6$, $L_7$ on $\calQ$ such that each line passes through a different intersection point in $l_2 \cap \calN$, as well as the lines  $L'_4$, $L'_5$, $L'_6$, $L'_7$ on $\calQ$ such that each line passes through a different intersection point in $l_2 \cap \calN$.
To be specific, we define the lines $L_4$ and $L_4'$ by the equations
\beq\label{eqn: Li,Li'}
 L_4\colon  \quad  \mathbf{Z}=2 \eta \mathbf{X} - \iota \mathbf{W}=0 , \qquad
 L'_4\colon \quad \mathbf{W} - \rho \mathbf{Z}=2 \eta' \mathbf{X} - \iota' \mathbf{W}=0.
\eeq
From the equation for $L_4$, we define equations for $L_5$, $L_6$ and $L_7$ by replacing the parameters $(\eta, \iota)$ by $(\kappa, \lambda)$, $ (\mu, \nu)$, and $(\xi, \omicron)$ respectively. 
Similarly, from the equation of $L_4'$, equations for $L'_5$, $L'_6$, and $L'_7$ can be defined  by replacing the parameters $(\eta', \iota') $ by $(\kappa', \lambda')$, $(\mu', \nu')$, and $(\xi', \omicron')$ respectively.  For general parameters, the lines are distinct and concurrent, meeting at $\mathrm{P}_1$. Each line induces a pencil; they are denoted by $L_i(u, v)$ and $L'_i(u, v)$ for $i=4,5,6,7$. For example, we set $L_4(u, v)= u \mathbf{Z} - v (2 \eta \mathbf{X} - \iota \mathbf{Z})$.  
Combined with Proposition~\ref{prop:fibration_rank11}, we then have the following geometric constructions of Jacobian elliptic fibrations:
\begin{theorem}
\label{thm_11}
Assume that the conditions of Proposition~\ref{prop:regular} are satisfied and let $L=H \oplus D_4(-1) \oplus A_1(-1)^{\oplus 5}$.  The minimal resolution of $\mathcal{Q}$ in Equation~(\ref{mainquartic11}) is a K3 surface endowed with a canonical  $L$-polarization. Conversely, an $L$-polarized K3 surface has a birational projective model~(\ref{mainquartic11}).  In particular,  Jacobian elliptic fibrations of the type determined in Theorem~\ref{thm:families} are attained as follows:
\beqn
\begin{array}{c|c|c|c|l}
\# 	&  \text{singular fibers} 		& \operatorname{MW} 		& \text{reducible fibers} 										& \text{pencil} \\
\hline
&&&&\\[-0.9em]
1	& 9 I_2 + 6 I_1		&  \mathbb{Z}/2\mathbb{Z}	& {\color{magenta} \widetilde{A}_1}^{\oplus 9}  						& \text{family of quartic curves through $\mathrm{P}_1$ and $\mathrm{P}_2$,} \\ 
	&				&						&														& \text{intersection of $2 v \mathbf{X} + u \mathbf{W}=0$ and $\mathcal{Q}$}\\
\hline
&&&&\\[-0.9em]
2	& I_0^* + 5 I_2 + 8 I_1		&   \{ \mathbb{I} \} 			& {\color{blue} \widetilde{D}_4} + {\color{magenta} \widetilde{A}_1}^{\oplus 5} 	& \text{residual surface intersection of $L_i(u, v)=0$} \\ 
	&						&						&													& \text{or $L'_i(u, v)=0 \; (i=4,5,6,7)$ and $\mathcal{Q}$}
\end{array}
\eeqn
\end{theorem}

\begin{proof}
To obtain the fibration~(1) we make the substitution
\beq
\label{eqn:substitution_alt_11}
\begin{split}
 \mathbf{W} = 2 v^2 X \big(X + \rho \, d_4(u, v) Z\big), & \qquad
 \mathbf{X} =  u v X \big(X + \rho \, d_4(u, v) Z\big), \\
 \mathbf{Y}=  \sqrt{2} Y \big(X + \rho \, d_4(u, v) Z\big), & \qquad 
 \mathbf{Z} = 2 v^2 d_4(u, v) \, XZ ,
\end{split} 
\eeq
in Equation~(\ref{mainquartic11}). This determines the Jacobian elliptic fibration~(\ref{eqn:vgs}) where $a, b$ are given by Equation~(\ref{eqn:params_rank11}) with $h_0 = -\rho$. Fibration~(1) is induced by a family of quartic curves through $\mathrm{P}_1$ and $\mathrm{P}_2$, which is obtained as the intersection of the pencil of hyperplanes $2 v \mathbf{X} + u \mathbf{W}=0$ and $\mathcal{Q}$. In general, $\mathbf{X}=\mathbf{W}=0$ does not define a line on $\mathcal{Q}$ if $\rho \neq 0$.
\par The fibrations induced by the pencils $L_n(u, v)=0$ and $L'_n(u, v)=0$ for $n=4, 5, 6, 7$ are all found in the same way. For example, if one substitutes $L_4(u, v) =0$ into Equation~(\ref{eqn:params_rank11}), one obtains a genus-one fibration. The fibration has a section since $2 \eta \mathbf{X} - \iota \mathbf{W} =0$ is a root of the polynomial $d_4(2 \mathbf{X} , \mathbf{W})$. By bringing the equation into Weierstrass normal form one obtains fibration~(2). The fiber of type $I_0^*$ is located over $u=0$, one fiber of type $I_2$ over $v=0$, and the remaining fibers of type $I_2$ over
\beq
\label{eqn:base_points}
\frac{u}{v} = \frac{\rho (\eta \iota' -\iota  \eta' )}{\eta'} , \frac{\rho (\eta \lambda' - \iota \kappa' )}{\kappa'} ,  \frac{\rho (\eta \nu' - \iota \mu' )}{\mu'} ,  \frac{\rho (\eta \omicron' - \iota \xi' )}{\xi'} \,.
\eeq 
Similar results can be obtained for the pencils $L_i(u, v)$ for $i=5, 6, 7$. Using Lemma~\ref{lem:relation} and Equation~(\ref{eqn:quartic_general2_11b}) as an equivalent way of writing Equation~(\ref{eqn:quartic_general2_11}), it follows that the same results hold if we interchange the lines $L_n$ and $L'_n$ for $n= 4, 5, 6, 7$. 

The converse is given by Proposition~\ref{prop:rank11} and Remark~\ref{rmk: K, Q birational}.
Alternatively, upon solving Equations~(\ref{eqn:substitution_alt_11}) for $u, v, X, Y$ and plugging the result into Equation~(\ref{eqn:vgs}), the proper transform is a quartic surface $\mathcal{Q}$ which is given by an equation of the form~(\ref{mainquartic11}). 
 \end{proof}

\begin{remark}\label{rmk: nikulin 11}
    Assume that the conditions of Proposition~\ref{prop:regular} are satisfied. 
    The van Geemen-Sarti involution (\ref{eqn:VGS_involution}), when constructed for Fibration~(1) in Theorem~\ref{thm_11}, induces a Nikulin involution on the K3 surface, that is the minimal resolution of $\mathcal{Q}$ in Equation~(\ref{mainquartic11}). The Nikulin involution is induced by the projective automorphism  $\mathbb{P}^3\dashrightarrow  \mathbb{P}^3$ given by
\beq
\label{eqn:involution_quartic11}
\begin{split}
& \Big[ \mathbf{W} : \ \mathbf{X} : \ \mathbf{Y} :  \ \mathbf{Z} \Big] 
\ \mapsto \ \\
& \quad \Big[ \tilde{d}_4\left(2 \mathbf{X}, \mathbf{W} , \mathbf{Z} \right) \mathbf{W} : \tilde{d}_4\left(2 \mathbf{X}, \mathbf{W} , \mathbf{Z} \right)\mathbf{X} : -\tilde{d}_4\left(2 \mathbf{X}, \mathbf{W} , \mathbf{Z} \right) \mathbf{Y} :  d_4\left(2 \mathbf{X}, \mathbf{W}  \right) \left( \mathbf{W}  - \rho \mathbf{Z} \right) \Big]
\end{split}
\eeq
with $\tilde{d}_4(u, v, w) = \rho d_4(u, v) + \rho w e_3(u, v) + v w  c_2(u, v)$. Equation~(\ref{eqn:involution_quartic11}) is obtained by writing the action of the involution (\ref{eqn:VGS_involution}) in terms of the coordinates $\mathbf{W}, \mathbf{X}, \mathbf{Y}, \mathbf{Z}$ using Equations~(\ref{eqn:substitution_alt_11}).
\end{remark}
%
%
\subsection{Family of Picard rank 12}\label{subsec: fam 12}
We continue to specialize the double sextic model (\ref{eqn:double_sextic11}) to obtain a geometric construction of elliptic fibrations of a very general $H\oplus D_6(-1) \oplus A_1(-1)^{\oplus 4}$-polarized K3 surfaces. We will follow the structure and organization of Section~\ref{subsec: fibrarion 11}, skipping over similar proofs and computations.

We consider the case when one of the lines in Equation~(\ref{eqn:double_sextic11}) becomes coincident with the node by setting $h_0=0$, i.e., 
\beq
\label{eqn:double_sextic12}
   \mathcal{S}\colon \quad  \tilde{y}^2 =    v w    \Big(c_2(u, v) \, w^2 + e_3(u, v) \, w + d_4(u, v) \Big) \,.
\eeq
The branch locus  in Equation~(\ref{eqn:double_sextic12}) has three irreducible components: the quartic curve $\mathcal{N}=\mathrm{V}(Q)$ with the node $\mathrm{n}=[0:0:1]$ and the lines $\ell_1 = \mathrm{V} (v),  \ell_2 = \mathrm{V}(w)$ such that $\mathrm{n} \in \ell_1 \cap \mathcal{N}$ and $\ell_1 \cap \ell_2 \cap \mathcal{N} = \emptyset$. 
The following proposition describes the elliptic fibrations induced by $\calS$.
\begin{proposition}
\label{prop:fibration_rank12}
Let  $c_2, e_3, d_4$ general polynomials and $\mathcal{S}$  the double sextic in Equation~(\ref{eqn:double_sextic12}). The following holds:
\begin{enumerate}[label=(\roman*)]
 \item the pencil of lines through the node $\mathrm{n} \in \mathcal{N}$ induces the Jacobian elliptic fibration~(\ref{eqn:vgs}) with
 \beq
 \label{eqn:params_rank12}
 a  =  v \, e_3 \,, \qquad b  =v^2 c_2\, d_4 \,,
\eeq
and has singular fibers $I_0^* + 6 I_2 + 6 I_1$ and Mordell-Weil group $\mathbb{Z}/2\mathbb{Z}$,
 \item a pencil of lines through a point in $\ell_2 \cap \mathcal{N}$ induces a Jacobian elliptic fibration with singular fibers $2 I_0^* + 2 I_2 + 8 I_1$ and a trivial Mordell-Weil group, 
 \item a pencil of lines through a point in $\ell_1 \cap \mathcal{N} - \{ \mathrm{n} \}$ induces a Jacobian elliptic fibration with singular fibers  $I_2^* +  4 I_2 + 8 I_1$ and a trivial Mordell-Weil group,
 \item  the pencil of lines through $\ell_1 \cap \ell_2$ induces an elliptic fibration without section and singular fibers $I_2^* + I_0^* + 10 I_1$.
\end{enumerate}
\end{proposition}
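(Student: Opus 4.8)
\emph{Proof proposal.} The plan is to handle all four fibrations by the scheme already used in Propositions~\ref{prop:fibration_rank10} and~\ref{prop:fibration_rank11}: each pencil of lines through its base point $p$ exhibits the double sextic~(\ref{eqn:double_sextic12}) as a genus-one fibration over $\mathbb{P}^1$; after parameterizing the pencil and eliminating one variable one obtains a quartic in the remaining fiber coordinate, whose relative Jacobian is brought to the Weierstrass form~(\ref{eqn:RJFIB}) with $f,g$ as in~(\ref{eqn:hermite}). The Kodaira types are then read off from the vanishing orders of $(f,g,\Delta)$ by Tate's algorithm, and the requirement that the local Euler numbers sum to $24$ serves as a consistency check throughout.

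For (i) there is essentially nothing new to compute: the pencil through $\mathrm{n}$ is the projection onto $\mathbb{P}(u,v)$, and setting $h_0=0$ in~(\ref{eqn:params_rank11}) (equivalently $j_0=0$ in Proposition~\ref{prop:fibration_rank11b}) yields $a=v\,e_3$ and $b=v^2 c_2 d_4$, so that $\Delta_\mathrm{alt}=b^2(a^2-4b)=v^6\,(c_2d_4)^2\,(e_3^2-4c_2d_4)$. Over $v=0$ one finds $\operatorname{ord}(f,g,\Delta)=(2,3,6)$, giving a fiber $I_0^*$; the six simple roots of $c_2d_4$ away from $v=0$ produce $I_2$ fibers and the six roots of $e_3^2-4c_2d_4$ produce $I_1$ fibers, so the configuration is $I_0^*+6I_2+6I_1$, with the section $[0:0:1]$ being $2$-torsion, whence $\operatorname{MW}=\mathbb{Z}/2\mathbb{Z}$. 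For (ii) I would mimic the proof of Proposition~\ref{prop:fibration_rank11}(ii): choose $[u_0:v_0:0]\in\ell_2\cap\mathcal{N}$, write $d_4=(v_0u-u_0v)\,d_3$, take the pencil $s(v_0u-u_0v)+tw=0$, eliminate $u$, and pass to the Weierstrass form over $\mathbb{P}(s,t)$. The chosen point supplies a section, and classifying the fibers gives $2I_0^*+2I_2+8I_1$ with trivial Mordell-Weil group (Euler count $12+4+8=24$).

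The substantive cases are (iii) and (iv), where the line in play meets the node $\mathrm{n}$: in (iii) the base point lies on the branch line $\ell_1=\mathrm{V}(v)\ni\mathrm{n}$, while in (iv) the base point $\ell_1\cap\ell_2=[1:0:0]$ is joined to $\mathrm{n}$ by $\ell_1$ itself, since both $[1:0:0]$ and $[0:0:1]$ lie on $v=0$. The coincidence of a branch line with the nodal direction forces a fiber strictly more degenerate than $I_0^*$, namely $I_2^*$, and the main obstacle will be to run Tate's algorithm carefully over that distinguished base point, confirming $\operatorname{ord}(f,g,\Delta)=(2,3,8)$ together with the correct sequence of blow-ups that separates $I_2^*$ from a merely $I_0^*$ degeneration. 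For (iii) I expect the remaining configuration to be $I_2^*+4I_2+8I_1$ with a section but no torsion; for (iv) the pencil $sv-tw=0$ yields only a genus-one fibration, whose relative Jacobian (computed as in (i)) has the node direction contributing $I_2^*$, the second branch line $\ell_2$ contributing $I_0^*$, and the remaining tangency conditions contributing $10I_1$, giving $I_2^*+I_0^*+10I_1$ without section. The Euler counts $8+8+8$ and $8+6+10$ both equal $24$, which I would use as the final check that no fiber has been mislabeled.
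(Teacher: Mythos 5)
Your proposal is correct and follows essentially the same route as the paper, which disposes of this proposition in one line by declaring it analogous to the proof of Proposition~\ref{prop:fibration_rank11}: project from the relevant base point, pass to the relative Jacobian via the covariants in~(\ref{eqn:hermite}), and read off the Kodaira types from the vanishing orders of $(f,g,\Delta)$. Your explicit verification of part~(i) (in particular $a=v\,e_3$, $b=v^2c_2d_4$, $\Delta_{\mathrm{alt}}=v^6(c_2d_4)^2(e_3^2-4c_2d_4)$ and the $(2,3,6)$ point at $v=0$) and the Euler-number cross-checks supply more detail than the paper itself records.
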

\begin{proof}
The proof is analogous to the proof of Proposition~\ref{prop:fibration_rank11}. 
\end{proof}

It can be proved that Equation~(\ref{eqn:double_sextic12}) parametrises a birational model for every very general $H\oplus D_6(-1) \oplus A_1(-1)^{\oplus 4}$-polarized K3 surface

\begin{proposition}
\label{prop:rank12}
A very general $H\oplus D_6(-1) \oplus A_1(-1)^{\oplus 4}$-polarized K3 surface is birational to a double sextic $\mathcal{S}$ that is branched on a uninodal quartic $\mathcal{N}$ (with node $\mathrm{n}$) and 2 lines $\ell_1, \ell_2$ so that $\mathrm{n} \in \ell_1 \cap \mathcal{N}$ and  $\ell_1 \cap \ell_2 \cap \mathcal{N} = \emptyset$. Conversely, every such double sextic is birational to an $H\oplus D_6(-1) \oplus A_1(-1)^{\oplus 4}$-polarized K3 surface.
\end{proposition}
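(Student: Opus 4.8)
The plan is to follow the strategy of the proof of Proposition~\ref{prop:rank11}, exploiting the uniqueness of the alternate fibration together with the explicit dictionary between the double sextic and the Weierstrass model furnished by Proposition~\ref{prop:fibration_rank12}(i). By Theorem~\ref{thm:multiplicityVF} combined with Proposition~\ref{prop:moduli_space_alt12}, a general $H\oplus D_6(-1) \oplus A_1(-1)^{\oplus 4}$-polarized K3 surface carries a unique alternate fibration of the form~(\ref{eqn:vgs}) with singular fibers $I_0^* + 6 I_2 + 6 I_1$; after normalizing the $I_0^*$ fiber to lie over $[u:v]=[1:0]$ one has $a_0 = b_0 = b_1 = 0$, so that $a = v\, e_3$ with $\deg e_3 = 3$ and $b = v^2\, b'$ with $\deg b' = 6$.

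First I would reconstruct the branch data. Setting $e_3 = a/v$ is immediate, and it remains to choose a factorization $b' = c_2\, d_4$ into homogeneous forms of degrees $2$ and $4$, which over $\mathbb{C}$ always exists. Since only the product $c_2 d_4 = b'$ enters the Weierstrass model~(\ref{eqn:vgs}), every such choice yields the same alternate fibration, so the various resulting double sextics are all birational to the single given K3 surface. With $c_2, e_3, d_4$ in hand I form the double sextic~(\ref{eqn:double_sextic12}); by Proposition~\ref{prop:fibration_rank12}(i) its pencil of lines through $\mathrm{n}=[0:0:1]$ induces the Jacobian fibration~(\ref{eqn:vgs}) with exactly the recovered parameters $a = v e_3$, $b = v^2 c_2 d_4$, and because this fibration carries a section, $\mathcal{S}$ is birational to the original surface.

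Next I would verify the incidence geometry, all of which is encoded in the normal form~(\ref{eqn:double_sextic12}): the node $\mathrm{n}=[0:0:1]$ lies on $\ell_1 = \mathrm{V}(v)$ but not on $\ell_2 = \mathrm{V}(w)$, while $\ell_1 \cap \ell_2 = [1:0:0]$ fails to lie on $\mathcal{N}$ precisely when the leading coefficient $d_4(1,0)$ is nonzero, which is a generic condition. The converse direction is then essentially immediate: for any double sextic of the stated shape, Proposition~\ref{prop:fibration_rank12}(i) produces a Jacobian fibration~(\ref{eqn:vgs}) with fibers $I_0^* + 6 I_2 + 6 I_1$ and Mordell--Weil group $\mathbb{Z}/2\mathbb{Z}$, realizing $L$ as a sublattice of the N\'eron--Severi lattice by Proposition~\ref{prop:moduli_space_alt12}, and the existence of the section identifies $\mathcal{S}$ birationally with this $L$-polarized surface.

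The main obstacle I anticipate is the genericity argument that the reconstructed quartic $\mathcal{N}=\mathrm{V}(c_2 w^2 + e_3 w + d_4)$ is genuinely uninodal, i.e.\ carries no singularity beyond the node at $\mathrm{n}$. The node at $\mathrm{n}$ is automatic since $c_2, e_3, d_4$ all vanish at $u=v=0$, but excluding further singular points requires control of the discriminant. Here I would use that the projection of $\mathcal{N}$ from $\mathrm{n}$ is the genus-two curve $\eta^2 = e_3^2 - 4 c_2 d_4$ of Equation~(\ref{eqn:g3_curve}), and that $e_3^2 - 4 c_2 d_4 = (a^2 - 4b)/v^2$ is precisely the degree-six binary form whose six simple roots are the base points of the six fibers of type $I_1$ of the alternate fibration. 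For a general $L$-polarized surface these roots are distinct, so the genus-two curve is smooth, and hence $\mathcal{N}$ has geometric genus two and is uninodal, as required.
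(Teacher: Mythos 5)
Your proposal follows essentially the same route as the paper's proof: invoke the uniqueness of the alternate fibration to normalize the $I_0^*$ fiber so that $a = v\,e_3$ and $b = v^2 c_2 d_4$, choose a factorization of the degree-six factor into forms of degrees $2$ and $4$, pass to the double sextic~(\ref{eqn:double_sextic12}), and deduce the converse from Proposition~\ref{prop:fibration_rank12}. The extra verifications you supply (independence of the factorization choice, the incidence conditions, and uninodality of $\mathcal{N}$ via the genus-two projection) are correct and merely make explicit what the paper leaves implicit.
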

\begin{proof}
Due to Remark \ref{rmk_2.3}, a very general $H\oplus D_6(-1) \oplus A_1(-1)^{\oplus 4}$-polarized K3 surface admits a unique Jacobian elliptic fibration~(\ref{eqn:vgs}) with a fiber of type $I_0^*$.  We have $a(u, v) = v e_3(u, v)$, and we may group the base points of the six fibers of type $I_2$ over $b=0$ into sets of 2 and 4 elements by writing $b(u, v)=v^2 c_2(u, v) d_4(u ,v)$ where $c_2, e_3, d_4$ are homogenous polynomials of degree 2, 3, and 4, respectively. We obtain the double sextic given by Equation~(\ref{eqn:double_sextic12}) by blowing up in Equation~(\ref{eqn:vgs}) via $[x:y] = [ v c_2(u, v) \tilde{x} : \tilde{y} ]$  with $[(u, v, x, z)] \in \mathbb{F}_4$ and $[(u, v, \tilde{x}, \tilde{z})] \in \mathbb{F}_1$ (using the notation of Equation~(\ref{eqn:HS})) and identifying $\mathbb{F}_1 \cong \mathbb{P}(u, v, w)$ via $w = \tilde{x}/\tilde{z}$ for $\tilde{z} \neq 0$. 
One checks that $\ell_1 = \mathrm{V}(v)$ is coincident with the node whereas $\ell_2 = \mathrm{V}(w)$ is not.  The other direction follows from Proposition~\ref{prop:fibration_rank12}.
\end{proof}

From the family $\calS$, we construct a family of quartic surfaces
\beq
\label{eqn:quartic_general2_12}
   \mathcal{K}\colon  \quad  0 = - v w   y^2 +  c_2(u, v) \, w^2 + e_3(u, v) \, w + d_4(u, v) \,.
\eeq 
Again, we assume the conditions of Proposition~\ref{prop:regular}.
It can be brought to the form of a generalized Inose quartic $\calQ$ as in Equation~(\ref{eqn:inose_quartic}) with $\rho = 0$.
By Lemma~\ref{lem:relation}, $\calK$ and $\calQ$ are isomorphic quartic surfaces.
Combined with Propositions~\ref{prop:regular} and \ref{prop:rank12}, we can see that both $\calK$ and $\calQ$ are birational models of a very general $H\oplus D_6(-1) \oplus A_1(-1)^{\oplus 4}$-polarized K3 surface.

By introducing a set of complex coefficients for Equation~\ref{eqn:quartic_general2_12}, we may obtain explicit expressions for the pencils which induce the Jacobian elliptic fibrations for a very general $H\oplus D_6(-1) \oplus A_1(-1)^{\oplus 4}$-polarized K3 surface.
By assumption, $\calK$ is very general so we may assume that the leading coefficients of $c_2, e_3$ and $d_4$ in Equation~(\ref{eqn:quartic_general2_12}) do not vanish.
In particular, we can write
\beq
\label{eqn:polys_2sextics}
\begin{split}
c_2(u, v) =  \big(\gamma u - \delta v\big)  \big(\varepsilon u - \zeta v\big)  , \qquad  \quad
 e_3(u, v) = u^3 - 3 \alpha u v^2 - 2 \beta v^3 , \\
 d_4(u, v) =  \big(\eta u  - \iota v\big)  \big(\kappa u -\lambda v\big)  \big(\mu u -\nu v\big) \big(\xi u -\omicron v\big) , \qquad
\end{split} 
\eeq
where the coefficients $(\alpha, \beta, \gamma, \delta , \varepsilon, \zeta, \eta, \iota, \kappa, \lambda, \mu, \nu, \xi, \omicron) \in \mathbb{C}^{14}$ satisfies $\gamma \varepsilon \eta \kappa \mu \xi \neq 0$. 

\begin{remark}\label{rmk:X<=>K 12}
    Similar to Remark~\ref{rmk:X<=>K}, one can obtain the polynomials $a$ and $b$ in the Weierstrass model $\calX$ (\ref{eqn:vgs}) from $\calK$ (\ref{eqn:quartic_general2_12}) using Equation~(\ref{eqn:params_rank12}).

    The converse also holds true, as in the proof of Proposition~\ref{prop:rank12}.
    Explicitly, if one substitutes Equation~(\ref{eqn:polys_2sextics}) into 
\beq\label{eqn:b=cd 12}
    a(u,v):= \sum_{i=0}^4 a_iv^iu^{4-i} = v e_3(u,v), \hspace{1em}
    b(u,v):= \sum_{j=0}^8 b_jv^ju^{8-j} = v^2 c_2(u,v)d_4(u,v),
\eeq
then 
the coefficients $a_i$ and $b_j$ are as follows:
\beq
\label{eqn:coeffs_set}
\begin{split}
 a_0=0, \quad a_1=1, \quad a_2=0, \quad a_3 =  -3 \alpha, \quad  a_4=- 2 \beta,\\
 b_0=b_1=0, \quad b_2 = \gamma \varepsilon \eta \kappa \mu \xi , \quad b_j =   (-1)^j  \gamma \varepsilon \eta \kappa \mu \xi  \cdot \sigma^{(6)}_{j-2}\left( \frac{\delta}{\gamma} , \dots , \frac{\omicron}{\xi} \right), \quad b_8 = \delta\zeta\iota\lambda\nu \omicron.
\end{split}
\eeq
Here, $\sigma^{(6)}_{k-2}$ for $k=2, \dots, 8$ are the elementary symmetric polynomials in 6 variables of degree $k-2$.  Therefore, polynomials $c_2, e_3$ and $d_4$ in Equation~(\ref{eqn:polys_2sextics}) can be expressed in terms of the $a_i$ and $b_j$. 
\end{remark}

\begin{remark}\label{rmk: sing 12}
Suppose $(\alpha, \beta, \gamma, \delta , \varepsilon, \zeta, \eta, \iota, \kappa,\lambda,\mu, \nu, \xi, \omicron) \in \mathbb{C}^{14}$ is the coefficient set considered in Remark~\ref{rmk:X<=>K 12}.
    Since \[b(u,v) = v^2 \big(\gamma u - \delta v\big) \big(\varepsilon u - \zeta v\big)  \big(\eta u  - \iota v\big)  \big(\kappa u -\lambda v\big)  \big(\mu u -\nu v\big) \big(\xi u -\omicron v\big),\]
    the vanishing order $N_{[1: -r]}(b) \in \{ 0, 1, \dots, 6\}$ of $b(u, v)$ at $[u: v]= [1: -r]$ for $r \neq 0$ equals the number of parameters $(u, v) \neq 0$ with
\beq
 [u: v] \ \in \ \Big\{  \big[\gamma, \delta \big],   \big[\varepsilon, \zeta \big],  \big[\eta, \iota \big],  \big[\kappa, \lambda \big],  \big[\mu,  \nu \big],  \big[\xi, \omicron\big] \Big\} \,.
\eeq
such that $[u: v] = [1: -r]$.
If we have $b(u, v) \neq 0, \frac{1}{4} a(u, v)^2$, and if  $\not \exists r \in \mathbb{C}$ with $ \ \alpha= r^2, \, \beta=r^3 ,\, N_{[1: -r]}(b)  \ge 4$, then it can be checked by explicit computation that the rational double points $\mathrm{P}_1$ and $\mathrm{P}_2$ in Remark~\ref{rmk:sing 11} are both of types $A_3$. 
\end{remark}

\begin{remark}\label{rmk:symmetries 12}
Let $\mathcal{K}$ and $\widetilde{\mathcal{K}}$ be two quartic surfaces satisfying the conditions of Remark~\ref{rmk: sing 12}.
By Proposition~\ref{prop:fibration_rank12}, their associated double sextics admit a Jacobian elliptic fibration~(\ref{eqn:vgs}) with polynomials $a, b$ and $\tilde{a}, \tilde{b}$ respectively. 
Upon bringing these polynomials into the standard form, it follows that the K3 surfaces obtained as minimal resolutions of $\calK$ and $\widetilde{\mathcal{K}}$ are isomorphic if and only if
\beq
\label{eqn:symmetry3}
 \Lambda^2 a(u, v) = \tilde{a}(u, \Lambda^2 v) \,, \qquad  \Lambda^4 b(u, v) = \tilde{b}(u, \Lambda^2 v) \,,
\eeq
for some $\Lambda \in \mathbb{C}^ \times$. In other words, the coefficients of $\tilde{a}, \tilde{b}$ are obtained from those of $a, b$ by the following:
\begin{itemize}
\item [(a)] pairwise interchanges of the elements in  
\beqn
\label{eqn:symmetry1}
 \Big\{ (\gamma, \delta), (\varepsilon, \zeta), (\eta, \iota), (\kappa, \lambda), (\mu,\nu), (\xi, \omicron)\Big\} ,
\eeqn
\item [(b)] rescalings of the coefficients according to
\beqn
\label{eqn:symmetry2}
\begin{split}
& (\alpha,\beta, \gamma, \delta, \varepsilon, \zeta, \eta,  \iota, \kappa, \lambda, \mu, \nu, \xi, \omicron) \ \mapsto \\ 
& \qquad (\Lambda^4 \alpha,  \Lambda^6 \beta,  \Lambda^{10} \gamma,  \Lambda^{12} \delta,  \Lambda^{-2} \varepsilon,  \zeta , \Lambda^{-2} \eta, \iota, \Lambda^{-2} \kappa, \lambda, \Lambda^{-2} \mu, \nu,  \Lambda^{-2} \xi, \omicron ),
\end{split}
\eeqn
for some $\Lambda \in \mathbb{C}^\times$.
\end{itemize}
\end{remark}

Recall that the points $l_2\cap \calN$ mentioned in Proposition~\ref{prop:fibration_rank12} have already been found in Section~\ref{subsec: fibrarion 11}.
Again, they give rise to the pencils of lines $L_i$ for $i = 4, \cdots, 7$ given in Equation~(\ref{eqn: Li,Li'}).
For $\mathcal{Q}$ now given by
\beq
\label{mainquartic}
\begin{split}
  \mathcal{Q}\colon \quad  0= \ 2 \mathbf{Y}^2 \mathbf{Z} \mathbf{W}& -8 \mathbf{X}^3 \mathbf{Z}+6 \alpha \mathbf{X} \mathbf{Z} \mathbf{W}^2+2\beta \mathbf{Z} \mathbf{W}^3 
-  \big(  2 \gamma \mathbf{X} - \delta \mathbf{W}  \big)  \big(  2 \varepsilon \mathbf{X} - \zeta \mathbf{W}  \big)    \mathbf{Z}^2 \\
 & -   \big(  2 \eta \mathbf{X} - \iota \mathbf{W}  \big) \big(  2 \kappa \mathbf{X} - \lambda \mathbf{W}  \big)  \big(  2 \mu \mathbf{X} - \nu \mathbf{W}  \big)  \big(  2 \xi \mathbf{X} - \omicron \mathbf{W}  \big),
\end{split}
\eeq
we find additional lines denoted by $L_1 $, $L^\pm_2$:
\beq\label{eqn:chi}
 L_1\colon  \quad \mathbf{W}=\mathbf{X}=0,  \qquad
 L^{\pm}_2\colon \quad  \mathbf{W}=(1 \pm \chi) \mathbf{X} + \gamma \varepsilon \mathbf{Z}=0, 
\eeq
where the new parameter $\chi$ satisfies  $4 \gamma \varepsilon \eta \kappa \mu \xi = 1 - \chi^2$.  The lines $L_1$, $L^{\pm}_2$, $L_4, L_5, L_6, L_7$ lie on the quartic surface $\mathcal{Q}$ in Equation~(\ref{mainquartic}). For general parameters, the lines are distinct and meet at $\mathrm{P}_1$. The line $L_1$ is also coincident with $\mathrm{P}_2$. 
Again, we denote by $L_1(u,v)$ and $L_2^\pm(u,v)$ the pencils of lines associated to the lines $L_1$ and $L_2^\pm$.
Note that the above pencils correspond to the pencils mentioned in Proposition~\ref{prop:fibration_rank12} with vertices being $n\in \calN$ and the points in $l_1 \cap \calN - \{n\}$ respectively.
We then have the following geometric construction of the Jacobian elliptic fibrations:
\begin{theorem}
\label{thm_12}
Assume that the conditions of Remark~\ref{rmk: sing 12} are satisfied, and let $L= H \oplus D_6(-1) \oplus A_1(-1)^{\oplus 4}$.  The minimal resolution of $\mathcal{Q}$ in Equation~(\ref{mainquartic}) is a K3 surface endowed with a canonical $L$-polarization. Conversely, a very general $L$-polarized K3 surface has a birational projective model~(\ref{mainquartic}). In particular,  Jacobian elliptic fibrations of the type determined in Theorem~\ref{thm:families} are attained as follows:
\beqn
\begin{array}{c|c|c|c|l}
\# 	&  \text{singular fibers} 		& \operatorname{MW} 		& \text{reducible fibers} 										& \text{pencil} \\
\hline
&&&&\\[-0.9em]
1	& I_0^* + 6 I_2 + 6 I_1		&  \mathbb{Z}/2\mathbb{Z}	& {\color{blue} \widetilde{D}_4 } + {\color{magenta} \widetilde{A}_1}^{\oplus 6}  		& \text{residual surface intersection} \\ 
	&						&						&														& \text{of $L_1(u, v)=0$ and $\mathcal{Q}$}\\
\hline
&&&&\\[-0.9em]
2	& 2 I_0^* + 2 I_2 + 8 I_1		&   \{ \mathbb{I} \} 			& {\color{blue} \widetilde{D}_4} + {\color{green} \widetilde{D}_4} + {\color{magenta} \widetilde{A}_1}^{\oplus 2} & \text{residual surface intersection} \\ 
	&						&						&														& \text{of $L_i(u, v)=0 \; (i=4,5,6,7)$ and $\mathcal{Q}$}\\
\hline
&&&&\\[-0.9em]
3	& I_2^* + 4  I_2 + 8 I_1		&   \{ \mathbb{I} \} 			& {\color{blue} \widetilde{D}_6}+ {\color{magenta} \widetilde{A}_1}^{\oplus 4} 		& \text{residual surface intersection} \\ 
	&						&						&														& \text{of $\mathit{L}^\pm_2(u, v)=0$ and $\mathcal{Q}$}
\end{array}
\eeqn
\end{theorem}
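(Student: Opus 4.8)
The plan is to reduce everything to the rank-$12$ double sextic results already established, mirroring the proof of Theorem~\ref{thm_11} but now in the degenerate situation $\rho=0$, where the line $L_1\colon \mathbf{W}=\mathbf{X}=0$ genuinely lies on $\mathcal{Q}$ and is coincident with $\mathrm{P}_2$. First I would invoke Lemma~\ref{lem:relation}: the coordinate change $[u:v:w:y]=[2\mathbf{X}:\mathbf{W}:\mathbf{Z}:\sqrt{2}\,\mathbf{Y}]$ identifies $\mathcal{Q}$ in~(\ref{mainquartic}) with the quartic $\mathcal{K}$ in~(\ref{eqn:quartic_general2_12}), whose associated double sextic is~(\ref{eqn:double_sextic12}). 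By Proposition~\ref{prop:rank12} the minimal resolution of that double sextic is $H\oplus D_6(-1)\oplus A_1(-1)^{\oplus 4}$-polarized, which delivers the asserted $L$-polarization once I confirm, via Propositions~\ref{prop:regular} and~\ref{prop:sing_points}, that $\mathcal{Q}$ is a quartic whose only singularities are the rational double points $\mathrm{P}_1,\mathrm{P}_2$, and hence that its minimal resolution is a K3 surface.

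The three Jacobian fibrations are precisely cases (i), (ii), (iii) of Proposition~\ref{prop:fibration_rank12}, and the remaining task is to realize the three geometric pencils explicitly on $\mathcal{Q}$. For fibration~(1) I would write down the $\rho=0$ specialization of the substitution~(\ref{eqn:substitution_alt_11}), namely $\mathbf{W}=2v^2X^2$, $\mathbf{X}=uvX^2$, $\mathbf{Y}=\sqrt{2}\,XY$, $\mathbf{Z}=2v^2 d_4(u,v)\,XZ$; inserting this into~(\ref{mainquartic}) returns the alternate Weierstrass form~(\ref{eqn:vgs}) with $a=v\,e_3$ and $b=v^2 c_2 d_4$ as in~(\ref{eqn:params_rank12}), so the singular fibers $I_0^*+6I_2+6I_1$ and $\operatorname{MW}=\mathbb{Z}/2\mathbb{Z}$ follow from Proposition~\ref{prop:fibration_rank12}(i). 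Geometrically this is the residual surface intersection of the pencil of planes through $L_1$ with $\mathcal{Q}$. For fibrations~(2) and~(3) I would substitute the pencils $L_i(u,v)=0$ $(i=4,5,6,7)$ and $L_2^\pm(u,v)=0$ respectively, reduce each to Weierstrass normal form, and read off the configurations $2I_0^*+2I_2+8I_1$ and $I_2^*+4I_2+8I_1$ from Proposition~\ref{prop:fibration_rank12}(ii),(iii). The essential verification is that each of these pencils produces a \emph{section}: for $L_i$ this holds because $2\eta\mathbf{X}-\iota\mathbf{W}$ and its analogues are roots of $d_4(2\mathbf{X},\mathbf{W})$, so $L_i$ meets $\mathcal{N}$ along $\ell_2$; for $L_2^\pm$ it holds because the identity $4\gamma\varepsilon\eta\kappa\mu\xi=1-\chi^2$ shows that $(1\pm\chi)\mathbf{X}+\gamma\varepsilon\mathbf{Z}$ cuts out exactly the two non-nodal points of $\ell_1\cap\mathcal{N}$, which are the centers of the pencils of Proposition~\ref{prop:fibration_rank12}(iii).

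For the converse I would run the construction in reverse. A general $\mathcal{X}\in\widetilde{\mathscr{M}}_L$ carries, by Theorem~\ref{thm:multiplicityVF}, a unique alternate fibration~(\ref{eqn:vgs}) normalized as in~(\ref{eqn:polys_rank12a}); matching its coefficients to the geometric parameters via~(\ref{eqn:coeffs_set}) determines $c_2,e_3,d_4$, hence the quartic~(\ref{mainquartic}). Solving the substitution of the previous paragraph for $u,v,X,Y$ and taking the proper transform of~(\ref{eqn:vgs}) then yields a surface of the form~(\ref{mainquartic}). Because Theorem~\ref{thm:multiplicityVF} guarantees that the alternate fibration is the unique fibration of its type up to the Nikulin involution~(\ref{eqn:involution_quartic}) of Proposition~\ref{NikulinInvolution}, the construction is canonical and the resulting $L$-polarization is the claimed one.

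The main obstacle I anticipate is fibration~(3): I must confirm that the fiber over the node degenerates all the way to $I_2^*$ rather than merely $I_0^*$ --- this is the content of Proposition~\ref{prop:fibration_rank12}(iii) and stems from $\ell_1=\mathrm{V}(v)$ passing through the node $\mathrm{n}$ --- and that $L_2^\pm$ genuinely pass through the residual intersection points, which forces the bookkeeping of the discriminant factor $1-\chi^2$. A secondary technical point is ensuring in the converse that the proper transform is minimal, with no further contractions; this reduces to the minimality condition recorded in Proposition~\ref{prop:sing_points} together with Remark~\ref{rem:regularity}.
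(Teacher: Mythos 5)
Your proposal is correct and follows essentially the same route as the paper: the paper's proof likewise reduces to the substitution $\mathbf{W}=2v^2x$, $\mathbf{X}=uvx$, $\mathbf{Y}=\sqrt{2}\,y$, $\mathbf{Z}=2v^2d_4(u,v)z$ (projectively the same as your $\rho=0$ specialization of~(\ref{eqn:substitution_alt_11})) to recover the alternate fibration~(\ref{eqn:vgs}) with $a,b$ as in~(\ref{eqn:polys_rank12a}), obtains fibrations~(2) and~(3) by substituting the pencils $L_i(u,v)=0$ and $L_2^\pm(u,v)=0$ and passing to Weierstrass form, and runs the converse by inverting the substitution, exactly as you describe. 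Your identification of the three fibrations with cases (i)--(iii) of Proposition~\ref{prop:fibration_rank12} and your check that $L_2^\pm$ cut out the two non-nodal points of $\ell_1\cap\mathcal{N}$ via $4\gamma\varepsilon\eta\kappa\mu\xi=1-\chi^2$ are both consistent with the paper.
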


\begin{proof}
The proof is analogous to the proof of Theorem~\ref{thm_11}.  To obtain the fibration~(1) we make the substitution
\beq
\label{eqn:substitution_alt}
\mathbf{W} = 2 v^2 x, \quad  \mathbf{X}= u v x, \quad  \mathbf{Y}=  \sqrt{2} y , \quad \mathbf{Z} = 2 v^2 (\eta u-\iota v) (\kappa u- \lambda v)  (\mu u- \nu v)  (\xi u- \omicron v) z, 
\eeq
in Equation~(\ref{mainquartic}), compatible with the pencil $L_1(u, v)=0$. This determines the Jacobian elliptic fibration~(\ref{eqn:vgs}) where the polynomials $a, b$ are as in Remark~\ref{rmk:X<=>K 12}.
The other two fibrations are found in a similar way as in the proof of Theorem~\ref{thm_11}.
If one substitutes $L_n(u,v)=0$ for $n=4, 5, 6, 7$ (resp. $L_2^\pm(u,v)$) into Equation~(\ref{eqn:params_rank12}), then one obtains a genus-one fibration with section.
By bringing the equation into Weierstrass normal form one obtains Jacobian elliptic fibration~(2) (resp. (3)).
The corresponding equations for the other pencils are then generated through the symmetries in Remark~\ref{rmk:symmetries 12}.

The converse direction is given by Proposition~\ref{prop:rank12} and Remark~\ref{rmk: K, Q birational}.
Alternatively, upon solving Equations~(\ref{eqn:substitution_alt}) for $u, v, X, Y$ and plugging the result into Equation~(\ref{eqn:vgs}), the proper transform is a quartic surface $\mathcal{Q}$ with an equation of the form~(\ref{mainquartic}).
\end{proof}

\begin{remark}\label{rmk: nikulin 12}
By setting $\rho=0$ in Equation~(\ref{eqn:involution_quartic11}), we find a projective automorphism  $\mathbb{P}^3\dashrightarrow  \mathbb{P}^3$, given by
\beq
\label{eqn:involution_quartic}
\begin{split}
& \Big[ \mathbf{W} : \ \mathbf{X} : \ \mathbf{Y} :  \ \mathbf{Z} \Big] 
\ \mapsto \ \\
& \qquad \Big[ c_2\left(2 \mathbf{X}, \mathbf{W} \right) \mathbf{W}\mathbf{Z} : c_2\left(2 \mathbf{X}, \mathbf{W} \right)\mathbf{X}\mathbf{Z} : -c_2\left(2 \mathbf{X}, \mathbf{W}  \right) \mathbf{Y} \mathbf{Z} :  d_4\left(2 \mathbf{X}, \mathbf{W}  \right)  \Big],
\end{split}
\eeq
which induces a Nikulin involution on the minimal resolution of $\mathcal{Q}$ in Theorem~\ref{thm_12}.
\end{remark}
%
%
\subsection{Family of Picard rank 13}

By restricting the configuration of the lines $l_1, l_2$ and the uninodal quartic $\calN$, we can further specialize the family of $H\oplus D_6(-1) \oplus A_1(-1)^{\oplus 4}$-polarized K3 surfaces to a family of Picard rank $13$:
\begin{proposition}
\label{prop:rank13}
An $H\oplus D_6(-1) \oplus A_1(-1)^{\oplus 4}$-polarization extends to an $H\oplus D_8(-1) \oplus A_1(-1)^{\oplus 3}$-polarization if and only if for the corresponding double sextic $\mathcal{S}$ in Equation~(\ref{eqn:double_sextic12}) either $\ell_1$ is tangent to $\mathrm{n}$ or $\ell_1 \cap \ell_2 \cap \mathcal{N} \neq \emptyset$.
\end{proposition}
\begin{proof}
As in the proof of Proposition~\ref{prop:rank12}, we plug Equation~(\ref{eqn:params_rank12}) into Equation~(\ref{eqn:vgs}). The fiber of type $I_0^*$ extends to a fiber of type $I_2^*$ if and only if $b(u, v) = v^3 b'(u, v)$. For the double sextic in Equation~(\ref{eqn:double_sextic12}) we then have either $c_2(u, v)=v c_1(u, v)$ or $d_4(u, v)=v d_3(u, v)$ where $c_1$ and $d_3$ have degree 1 and 3, respectively.  In the first case, the line $\mathrm{V}(v)$ in the branch locus is tangent to the node of the quartic. In the second case, the two lines in the branch locus, i.e., $\mathrm{V}(v)$ and $\mathrm{V}(w)$, which intersect at $[1:0:0]$, now intersect on the quartic.
\end{proof}

Note that the pencils of lines described in Proposition~\ref{prop:fibration_rank12}~(i)-(iii) still induce Jacobian elliptic fibrations with the same types of singular fibers and the same Mordell-Weil group. 
The fibration in Proposition~\ref{prop:fibration_rank12}(iv) extends in the situation of Proposition~\ref{prop:rank13} as follows: when $\ell_1$ is tangent to $\mathrm{n}$, the fibration without section has singular fibers $III^* + I_0^* + 9 I_1$. 
For $\ell_1 \cap \ell_2 \cap \mathcal{N} \neq \emptyset$ the fibration has a section and is a standard fibration.

In terms of the generalized Inose quartic model (\ref{mainquartic}), the specialization corresponds to the situation when $(\xi, \omicron)=(0,1)$, or equivalently when $\chi^2=1$.
In this case, the lines $L_2^\pm$ become the lines
\beq 
L_2\colon \quad   \mathbf{W}= \mathbf{Z} =0 \,, \qquad \tilde{L}_2\colon \quad \mathbf{W}=2 \mathbf{X} + \gamma \varepsilon \mathbf{Z}=0\,. 
\eeq
Note that $L_7$ also coincides with the line $L_2$.
Again, we denote the corresponding pencils on the quartic surface $\mathcal{Q}$ by
$L_2(u, v)$ and $\tilde{L}_2(u, v)$.

\begin{remark}\label{rmk: sing 13}
    When $(\xi, \omicron)=(0,1)$, the singularities $P_1$ and $P_2$ in Remark~\ref{rmk: sing 12} are of types $A_5$ and $A_3$ respectively,
\end{remark}

We have the following:

\begin{theorem}
\label{thm_13}
Assume that the conditions of Remark~\ref{rmk: sing 12} are satisfied with $(\xi, \omicron)=(0,1)$, and let $L= H \oplus D_8(-1)  \oplus A_1(-1)^{\oplus 3}$. The minimal resolution of $\mathcal{Q}$ in Equation~(\ref{mainquartic}) is a K3 surface endowed with a canonical $L$-polarization. Conversely, an $L$-polarized K3 surface has a birational projective model~(\ref{mainquartic}) with $(\xi, \omicron)=(0,1)$. In particular,  Jacobian elliptic fibrations of the type determined in Theorem~\ref{thm:families} are attained as follows:

\beqn
\begin{array}{c|c|c|c|l}
\# 	&  \text{singular fibers} 		& \operatorname{MW} 		& \text{reducible fibers} 															& \text{pencil} \\
\hline
&&&&\\[-0.9em]
1	& I_2^* + 5 I_2 + 6 I_1		&  \mathbb{Z}/2\mathbb{Z}	& {\color{blue} \widetilde{D}_6} + {\color{magenta} \widetilde{A}_1}^{\oplus 5}  					& \text{residual surface intersection} \\ 
	&						&						&																			& \text{of $L_1(u, v)=0$ and $\mathcal{Q}$}\\
\hline
&&&&\\[-0.9em]
2	& I_0^* + I_2^* + I_2 + 8 I_1	&   \{ \mathbb{I} \} 			& {\color{blue} \widetilde{D}_4} + {\color{green} \widetilde{D}_6} + {\color{magenta} \widetilde{A}_1} 	& \text{residual surface intersection} \\ 
	&						&						&																			& \text{of $L_i(u, v)=0 \; (i=2,4,5,6)$ and $\mathcal{Q}$}\\
\hline
&&&&\\[-0.9em]
3 	& I_4^* + 3  I_2 + 8 I_1		&   \{ \mathbb{I} \} 			& {\color{blue} \widetilde{D}_8} + {\color{magenta} \widetilde{A}_1}^{\oplus 3} 					& \text{residual surface intersection} \\ 
	&						&						&																			& \text{of $\tilde{L}_2(u, v)=0$  and $\mathcal{Q}$}\\
\hline
&&&&\\[-0.9em]
4	& III^* + 4  I_2 + 7 I_1		&   \{ \mathbb{I} \} 			& {\color{blue} \widetilde{E}_7}+ {\color{magenta} \widetilde{A}_1}^{\oplus 4} 					& \text{intersection of quadric surfaces} \\ 
	&						&						&																			& \text{$\mathit{C}_i(u, v)=0  \, (i=4, 5, 6)$ and $\mathcal{Q}$}\\
\end{array}
\eeqn
\end{theorem}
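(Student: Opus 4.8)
The plan is to follow the template of the proofs of Theorems~\ref{thm_11} and~\ref{thm_12}, feeding in the specialization $(\xi,\omicron)=(0,1)$ --- equivalently $\xi=0$, so that $v\mid d_4$ and $b_2=\gamma\varepsilon\eta\kappa\mu\xi=0$ --- at every step. For the forward direction I would first invoke Lemma~\ref{lem:relation} to identify $\mathcal{Q}$ in~(\ref{mainquartic}) with the quartic $\mathcal{K}$ of~(\ref{eqn:quartic_general2_12}) and its associated double sextic~(\ref{eqn:double_sextic12}). Since $\xi=0$ forces $d_4(u,v)=v\,d_3(u,v)$, the two branch lines $\ell_1=\mathrm{V}(v)$ and $\ell_2=\mathrm{V}(w)$ now meet on $\mathcal{N}$, so Proposition~\ref{prop:rank13} upgrades the $H\oplus D_6(-1)\oplus A_1(-1)^{\oplus4}$-polarization to the $L=H\oplus D_8(-1)\oplus A_1(-1)^{\oplus3}$-polarization, while Propositions~\ref{prop:regular} and~\ref{prop:sing_points} guarantee that the only singularities of $\mathcal{Q}$ are the rational double points of type $A_5$ and $A_3$ at $\mathrm{P}_1$ and $\mathrm{P}_2$, so the minimal resolution is a K3 surface. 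Applying the substitution~(\ref{eqn:substitution_alt}) to~(\ref{mainquartic}) with $\xi=0$ then produces the alternate fibration~(\ref{eqn:vgs}) with $a=v\,e_3$ and $b=v^3 c_2 d_3$ (up to sign); the discriminant $\Delta_\mathrm{alt}=b^2(a^2-4b)$ vanishes to order eight along $v=0$, to order two at each of the five roots of $c_2 d_3$, and simply at six further points, exhibiting fibration~(1) with fibers $I_2^*+5I_2+6I_1$, the $I_0^*$ of the rank-$12$ model being enhanced to $I_2^*$ exactly as recorded in Proposition~\ref{prop:rank13}.

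For the converse I would invoke Theorem~\ref{thm:multiplicityVF}: a general $L$-polarized K3 surface carries a unique alternate fibration, necessarily with reducible fibers $\widetilde{D}_6+\widetilde{A}_1^{\oplus5}$, so it can be written as~(\ref{eqn:vgs}) with $a,b$ of the shape~(\ref{eqn:polys_rank12a}) subject to $b_2=0$, which is precisely the defining equation of the rank-$13$ subspace in Table~\ref{tab:subspaces}. Factoring $b=v^3 c_2 d_3$ and matching coefficients as in~(\ref{eqn:coeffs_set}) recovers a coefficient set with $(\xi,\omicron)=(0,1)$; solving~(\ref{eqn:substitution_alt}) for $u,v,X,Y$ and substituting into~(\ref{eqn:vgs}) then presents the proper transform as a quartic of the form~(\ref{mainquartic}) with $(\xi,\omicron)=(0,1)$. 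Canonicity of the polarization follows, as in the two previous theorems, from the uniqueness in Theorem~\ref{thm:multiplicityVF} together with the Nikulin involution of Proposition~\ref{NikulinInvolution}.

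The remaining fibrations are then obtained by substituting the relevant pencil into~(\ref{eqn:vgs}) and reducing to Weierstrass form. Fibrations~(2) and~(3) are handled exactly as their counterparts in Theorem~\ref{thm_12}: the line pencils $L_i(u,v)=0$ for $i=2,4,5,6$ yield a genus-one fibration with section that reduces to $I_0^*+I_2^*+I_2+8I_1$ (root lattice $D_4+D_6+A_1$, the standard fibration), while $\tilde{L}_2(u,v)=0$ yields $I_4^*+3I_2+8I_1$ (root lattice $D_8+3A_1$); in each case the section exists because the defining linear form divides the appropriate factor of the branch polynomial, and the Kodaira types follow from a discriminant computation and the enhancement mechanism of Proposition~\ref{prop:rank13}.

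The main obstacle is fibration~(4), the Jacobian fibration with fibers $III^*+4I_2+7I_1$, trivial Mordell-Weil group, and frame $(E_7+4A_1,\{\mathbb{I}\})$. Unlike the others it is cut out not by a pencil of hyperplanes through $\mathrm{P}_1$ but by a pencil of quadric surfaces $C_i(u,v)=0$ (indexed $i=4,5,6$, matching the three roots of $d_3$ that survive the degeneration), and it has no analog among the rank-$12$ line pencils: in the present model the pencil of Proposition~\ref{prop:fibration_rank12}(iv) degenerates to the standard fibration rather than to an $E_7$-fibration, per the remark following Proposition~\ref{prop:rank13}. The difficulty is therefore twofold. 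First one must exhibit the correct pencil of quadrics, whose generic member meets $\mathcal{Q}$ in a fixed curve together with the desired elliptic curve, so that the type-$III^*$ fiber is assembled from the five-component $A_5$ resolution over $\mathrm{P}_1$ plus the further rational curves cut out along the base locus. Then, after the explicit change of variables that straightens the quadric pencil and removes the fixed component, one must reduce the residual intersection to a Weierstrass equation, read off the $III^*$ fiber and the four $I_2$ fibers, and confirm that the Mordell-Weil group is trivial --- so that the frame is $(E_7+4A_1,\{\mathbb{I}\})$ and not one carrying $2$-torsion. This Weierstrass reduction for the quadric pencil is the one genuinely new calculation; the Kodaira types should then follow from the discriminant and Tate's algorithm as in the earlier cases.
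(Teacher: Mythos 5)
Your outline for the polarization statements, the converse, and fibrations~(1)--(3) matches the paper's proof, which likewise reduces everything to the proofs of Theorems~\ref{thm_11} and~\ref{thm_12} after specializing $(\xi,\omicron)=(0,1)$, and your discriminant bookkeeping for fibration~(1) (vanishing order $8$ along $v=0$ giving $I_2^*$, plus $5I_2+6I_1$) is correct. However, for fibration~(4) you have correctly diagnosed the difficulty but not resolved it: you never exhibit the pencil of quadrics, and this is precisely the content that the proof must supply, since the $C_i$ appearing in the statement of the theorem are \emph{defined} only in its proof. Saying ``one must exhibit the correct pencil of quadrics'' and ``one must reduce the residual intersection to a Weierstrass equation'' is a statement of the problem, not a proof; as it stands, the existence of a Jacobian elliptic fibration with frame $(E_7+4A_1,\{\mathbb{I}\})$ realized by quadric sections of $\mathcal{Q}$ is unestablished.

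What the paper actually does here is explicit and not guessable from the general template: it sets
\beqn
\begin{split}
 \mathit{C}_4(u, v) & = \mu \big(\kappa u - \lambda v\big) \big(2 \mu \mathbf{X} - \nu \mathbf{W}\big) \big( 2 \kappa \mathbf{X} - \lambda \mathbf{W} + \gamma \varepsilon \kappa \mathbf{Z}\big) \\
 & - \kappa \big(\mu u - \nu v\big) \big(2 \kappa \mathbf{X} - \lambda \mathbf{W}\big) \big( 2 \mu \mathbf{X} - \nu \mathbf{W} + \gamma \varepsilon \mu \mathbf{Z}\big) \,,
\end{split}
\eeqn
a pencil spanned by two reducible quadrics, each the union of two planes cut out by linear forms attached to the lines $L_5$, $L_6$ and their images under the Nikulin involution ($C_5$, $C_6$ are obtained by the parameter interchanges you anticipated). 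It then gives the explicit substitution~(\ref{eqn:substitution_bfd2}), with auxiliary polynomials $q_1, q_2, q_3$, which straightens the pencil, removes the fixed component, and lands directly on a Weierstrass model with fibers $III^* + 4I_2 + 7I_1$ and trivial Mordell--Weil group. Without producing this pencil and substitution (or some equivalent construction), your argument for row~(4) of the table is incomplete; the rest of the proposal is sound and follows the paper's route.
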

\begin{proof}
The statements concerning fibrations~(1)-(3) can be proved in a similar way as in Theorem~\ref{thm_12} after setting $(\xi,\omicron)=(0,1)$. 

Pencils of quadratic surfaces, denoted by $\mathit{C}_n(u, v)=0$ with $[u:v] \in \mathbb{P}^1$ for $n= 4, 5, 6$, are constructed as follows. We set
\beq
\label{eqn:pencil_extra1}
\begin{split}
 \mathit{C}_4(u, v) & = \mu \big(\kappa u - \lambda v\big) \big(2 \mu \mathbf{X} - \nu \mathbf{W}\big) \big( 2 \kappa \mathbf{X} - \lambda \mathbf{W} + \gamma \varepsilon \kappa \mathbf{Z}\big) \\
 & - \kappa \big(\mu u - \nu v\big) \big(2 \kappa \mathbf{X} - \lambda \mathbf{W}\big) \big( 2 \mu \mathbf{X} - \nu \mathbf{W} + \gamma \varepsilon \mu \mathbf{Z}\big) \,,
\end{split} 
\eeq
which is invariant under the permutation of parameters $(\gamma, \delta)$ with $ (\varepsilon, \zeta)$.
The pencils $\mathit{C}_5$ and $\mathit{C}_6$ can be obtained by replacing the parameters 
$((\kappa, \lambda), (\mu, \nu))$ by $((\eta, \iota), (\mu, \nu))$ or $((\eta, \iota), (\kappa, \lambda))$.
By making the substitutions 
\beq
\label{eqn:substitution_bfd2}
\begin{array}{ll}
\mathbf{W} & = 2 \gamma^2 \varepsilon^2 \kappa^2 \mu^2 v^2  \big(\gamma u-\delta v\big)\big(\varepsilon u-\zeta v\big) x z  \,, \\ [0.4em]
\,\mathbf{X} & =  \gamma^2 \varepsilon^2 \kappa^2 \mu^2  v \big(\gamma u-\delta v\big)\big(\varepsilon u-\zeta v\big)  q_1(x, z, u, v) z\,, \\ [0.4em]
\,\mathbf{Y} & = \sqrt{2} \gamma \kappa \mu \big(\gamma u-\delta v\big)\big(\varepsilon u-\zeta v\big)  y z \,, \\  [0.4em]
\,\mathbf{Z} & = 2 q_2(x, z, u, v) \, q_3(x, z, u, v) \,, 
\end{array}
\eeq
in Equation~(\ref{mainquartic}) which is compatible with $C_4(u, v)=0$, and by using the polynomials
\beq
\begin{array}{ll}
q_1(x, z, u, v) & = u x - \gamma \varepsilon \kappa \mu v  \big(\gamma u - \delta v\big) \big(\varepsilon u - \zeta v\big)   \big(\kappa u - \lambda v\big)  \big(\mu u - \nu v\big)  z \,, \\ [0.4em]
q_2(x, z, u, v) & = x- \gamma \varepsilon \kappa^2 \mu v \big(\gamma u - \delta v\big) \big(\varepsilon u - \zeta v\big)  \big(\mu u - \nu v\big)  z \,, \\ [0.4em]
q_3(x, z, u, v) & = x- \gamma \varepsilon  \kappa \mu^2 v \big(\gamma u - \delta v\big) \big(\varepsilon u - \zeta v\big) \big(\kappa u - \lambda v\big)  z \,, 
\end{array}
\eeq
one obtains the Jacobian elliptic fibration~(4). Similarly, Jacobian elliptic fibrations with the same singular fibers are obtained from $C_5(u, v)$ and $C_6(u, v)$.
\end{proof}
%
%
\section{Dual graphs of rational curves}\label{sec_dual graph}
Following Kondo~\cite{MR1139659}, we define the \emph{dual graph} of smooth rational curves to be the multigraph whose set of vertices is the set of all smooth rational curves on a K3 surface such that the vertices $\Sigma, \Sigma'$ are joint by an $m$-fold edge if and only if their intersection product satisfies $\Sigma \circ \Sigma' = m$.  The following fact is well known; see \cites{MR2274533, Roulleau22}:
\begin{theorem}
\label{thm:rank10}
A very general $H \oplus N$-polarized K3 surface $\mathcal{X}$ satisfies the following:
\begin{enumerate}
\item the automorphism group of $\mathcal{X}$ is finite, i.e., $|\operatorname{Aut}(\mathcal{X})|<\infty$.  
\item $\mathcal{X}$ has exactly 18 smooth rational curves and the dual graph of rational curves is given by Figure~\ref{fig:pic10}, and the van Geemen-Sarti involution (\ref{eqn:VGS_involution}) acts as horizontal flip.
\end{enumerate}
\end{theorem}
\begin{figure}
	\centering
  	\scalebox{\MyScalePicBig}{%
    		\begin{tikzpicture}[rotate=90]
       		\input{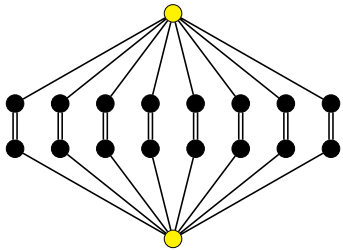}
    		\end{tikzpicture}}
	\caption{Rational curves on $\mathcal{X}$ with N\'eron-Severi lattice $H \oplus N$}
	\label{fig:pic10}
\end{figure}

We will now construct the dual graphs of all smooth rational curves for the K3 surfaces of Theorems~\ref{thm_11}, \ref{thm_12}, and ~\ref{thm_13}.  We will also provide the embeddings of the reducible fibers appearing in the tables of the theorems into the dual graphs. 
With each dual graphs, we will also visualise the Nikulin involution that is induced by the van Geemen-Sarti involution in Theorem~\ref{thm:rank10}.
In these figures, the components of the reducible fibers, given by extended Dynkin diagrams, are distinguished by the same colors that were used in the aforementioned theorems, and the classes of the section (and the 2-torsions section if applicable) are represented by yellow nodes.

By the adjunction formula, an irreducible curve on a K3 surface is a smooth rational curve if and only if its self-intersection number is $-2$.  As we will show, the Inose-type quartic normal form derived in Section~\ref{sec_fams} allows for the simple construction of rational curves as complete intersection curves. These curves turn out to give all rational curves on the corresponding K3 surfaces. The last statement will follow from a comparison with work by Roulleau in \cite{Roulleau22}: there, the number of all rational curves and their divisor classes were determined by a lattice theoretic method implementing an algorithm due to Vinberg.

\begin{remark}
Note that in the forthcoming figures in this section which feature the rational curves on a K3 surface in this section, some intersections of the curves are not shown. However, the intersection numbers left out can be easily computed using Equations~(\ref{eqn:divisor_classes_aux_11}), (\ref{eqn:divisor_classes_aux_12}) and (\ref{eqn:divisor_classes_aux_13}).
\end{remark}
%
%
 \subsection{Family of Picard rank 11}\label{sec_dual graph 11}
\par We determine the dual graph of smooth rational curves for the K3 surfaces $\mathcal{X}$ with N\'eron-Severi lattice $H \oplus D_4(-1) \oplus A_1(-1)^{\oplus 5}$  in Theorem~\ref{thm_11}. In this case, one has the following 90 rational curves: 
\begin{gather}
\label{eqn:curves_11}
  a_1, a_2, a_3, b_1, R_0, R_{13}, L_n,  R_n, L'_{n'}, R'_{n'},  \\
\label{eqn:curves_11_aux}  
   S_{n, n'}, \tilde{S}_{n,n'}, T_{mn, m'n'} \ \ \text{for $m, n, m', n' \in \{4, 5, 6, 7\}$ with $m<n, m'<n'$}.
\end{gather}
The lines $L_4, \dots, L'_7$ were already determined in Section~\ref{subsec: fibrarion 11}.   The two sets $\{ a_1, a_2, a_3 \}$ and $\{b_1 \}$ were mentioned in Remark~\ref{rmk:sing 11}. 

Lastly, the curves $R_\bullet, R'_\bullet, S_\bullet, \tilde{S}_\bullet, T_\bullet$ are obtained from singular complete intersection curves of higher (arithmetic) genus on the quartic surface $\mathcal{Q}$.  
When resolving the quartic surface~(\ref{mainquartic11}), these curves lift to smooth rational curves on the K3 surface, which by a slight  abuse of notation we shall denote by the same symbol.
In the following we give these singular complete intersections curves.

\beq
\begin{split}
R_0\colon \quad &\left\lbrace 
  \begin{array}{rcl}
   0 & = &   \mathbf{W} \, \\
   0 & = & 2 \rho \mathbf{Y}^2 \mathbf{Z}^2 + 4 \mathbf{X}^2 \big( c_2(1,0) \mathbf{Z}^2 + 2 \, e_3(1,0) \mathbf{X} \mathbf{Z} + 4 \, d_4(1,0) \mathbf{X}^2 \big),     \end{array} \right.\\
R_{13}\colon \quad &\left\lbrace 
  \begin{array}{rcl}
   0 & = &   \mathbf{W} \mathbf{Z}  c_2(2 \mathbf{X}, \mathbf{W})  + \rho \mathbf{Z}  e_3(2 \mathbf{X}, \mathbf{W}) + \rho  d_4(2 \mathbf{X}, \mathbf{W}) \, \\
   0 & = & 2 \rho \mathbf{Y}^2+  c_2(2 \mathbf{X}, \mathbf{W}),     \end{array} \right.\\
R_4\colon \quad &\left\lbrace 
  \begin{array}{rcl}
   0 & = &  2\eta \mathbf{X}-\iota \mathbf{W}\\
   0 & = &2 \eta^3 ( \mathbf{W} - \rho \mathbf{Z}) \mathbf{Y}^2 - \eta \, c_2(\iota, \eta) \mathbf{W}^2 \mathbf{Z} - e_3(\iota, \eta) \mathbf{W}^3 ,  \end{array} \right.\\
R'_4\colon \quad &\left\lbrace 
  \begin{array}{rcl}
   0 & = &  2\eta' \mathbf{X}-\iota' \mathbf{W}\\
   0 & = &2 (\eta')^4 \rho \mathbf{Z} \mathbf{Y}^2 + (\eta')^2 \, c_2(\iota', \eta') \mathbf{W}^2 \mathbf{Z} - \rho \, d_4(\iota', \eta') \mathbf{W}^3 .  \end{array} \right.
  \end{split}  
\eeq
Equations for $R_5$, $R_6$, and $R_7$ are obtained from $R_4$ by replacing $(\eta, \iota)$ by $(\kappa, \lambda)$, $(\mu, \nu)$, and $(\xi, \omicron)$, respectively. Similarly, the equations for $R'_5$, $R'_6$, and $R'_7$ are obtained from $R'_4$ by replacing  $(\eta', \iota')$ by $(\kappa', \lambda')$, $(\mu', \nu')$, and $(\xi', \omicron')$, respectively.

 We also have the following complete intersection curve:
\beq
\begin{split}
S_{4,4}\colon \quad &\left\lbrace 
 \begin{array}{rcl}
    0	& =	&   \iota \eta'  \big(\mathbf{W} - \rho \mathbf{Z} \big)  - \eta \big( 2 \eta' \mathbf{X} - \rho \iota' \mathbf{Z} \big)  \\[0.2em]
      0 	& = 	& 2 \rho\eta \eta'  \, \mathbf{Y}^2 +   \frac{(\eta')^2 ( 2 \eta \mathbf{X} - \iota \mathbf{W}) \, c_2(2 \mathbf{X}, \mathbf{W})}{ 2 \eta' \mathbf{X} - \iota' \mathbf{W}}\\ 
    	& & \phantom{2 \rho\eta \eta'  \, \mathbf{Y}^2} + 	\frac{\rho\eta'  \, (\eta \iota' - \iota \eta')   \, e_3(2 \mathbf{X}, \mathbf{W})}{ 2 \eta' \mathbf{X} - \iota' \mathbf{W}}  + \frac{ \rho^2 (\eta \iota' - \iota \eta')^2 d_4(2 \mathbf{X}, \mathbf{W})  }{(2 \eta \mathbf{X} - \iota \mathbf{W})(2 \eta' \mathbf{X} - \iota' \mathbf{W})},
\end{array} \right. 
\end{split}  
\eeq
where the second equation is a homogeneous polynomial of degree three. Equations for $S_{5,4}$, $S_{6,4}$, and $S_{7,4}$ are obtained from $S_{4,4}$ by replacing $(\eta, \iota)$ by $(\kappa, \lambda)$, $(\mu, \nu)$, and $(\xi, \omicron)$ respectively. 
The equations for $S_{n,5}$, $S_{n,6}$, and $S_{n,7}$ are obtained from $S_{n,4}$ for $n=4, 5, 6, 7$ by replacing  $(\eta', \iota')$ by $(\kappa', \lambda')$, $(\mu', \nu')$, and $(\xi', \omicron')$ respectively. This procedure generates the 16 rational curves $S_{4, 4}, S_{4, 5}, \dots, S_{7,6}, S_{7,7}$. 
Applying the Nikulin involution in Equation~(\ref{rmk: nikulin 11}), we obtain from $S_{4,4}$ the complete intersection curve:
\beqn
\begin{split}
\tilde{S}_{4,4}\colon \quad &\left\lbrace 
 \begin{array}{rcl}
    0	& =	&  \frac{1}{2 \eta' \mathbf{X} - \iota' \mathbf{W}} \Big( \eta' \mathbf{W} \mathbf{Z}  c_2(2 \mathbf{X}, \mathbf{W}) + \rho  \eta' \mathbf{Z}  e_3(2 \mathbf{X}, \mathbf{W})  \\
        & \phantom{+}  &  \phantom{\frac{1}{2 \eta' \mathbf{X} - \iota' \mathbf{W}} \Big( \eta' \mathbf{W} \mathbf{Z}  c_2(2 \mathbf{X}, \mathbf{W})} +\frac{ \rho (\eta'(2\eta\mathbf{X}-\rho \iota \mathbf{Z})-\eta\iota'(\mathbf{W}-\rho\mathbf{Z})) d_4(2 \mathbf{X}, \mathbf{W})  }{(2 \eta \mathbf{X} - \iota \mathbf{W})(2 \eta' \mathbf{X} - \iota' \mathbf{W})} \Big)\\[0.2em]
    0 	& = 	& 2 \rho\eta \eta'  \, \mathbf{Y}^2 +   \frac{(\eta')^2 ( 2 \eta \mathbf{X} - \iota \mathbf{W}) \, c_2(2 \mathbf{X}, \mathbf{W})}{ 2 \eta' \mathbf{X} - \iota' \mathbf{W}}  \\
    	&  \phantom{+} 	&\phantom{2 \rho\eta \eta'  \, \mathbf{Y}^2 } + \frac{\rho\eta'  \, (\eta \iota' - \iota \eta')   \, e_3(2 \mathbf{X}, \mathbf{W})}{ 2 \eta' \mathbf{X} - \iota' \mathbf{W}}  + \frac{ \rho^2 (\eta \iota' - \iota \eta')^2 d_4(2 \mathbf{X}, \mathbf{W})  }{(2 \eta \mathbf{X} - \iota \mathbf{W})(2 \eta' \mathbf{X} - \iota' \mathbf{W})} ,
\end{array} \right. 
\end{split}  
\eeqn
where the first and second equation are again homogeneous polynomials. Equations for $\tilde{S}_{5,4}, \dots, \tilde{S}_{7,7}$  are then obtained from $\tilde{S}_{4,4}$ in the same way as before.
\par For  $m, n, m', n' \in \{4, 5, 6, 7\}$ with $m<n, m'<n'$, we also introduce the following complete intersection curves:
\beqn
\begin{split}
T_{mn,m'n'}\colon \quad &\left\lbrace 
 \begin{array}{rcl}
    0	& =	&  \frac{ Q_{m'n'}(2  \mathbf{X}, \mathbf{W})  + \rho P_{mn}(2  \mathbf{X}, \mathbf{W}) (\mathbf{W} -\rho \mathbf{Z})}{\mathbf{W}} \\[0.2em]
    0 	& = 	&  2 \rho Y^2  + \frac{  \rho^2 P_{mn}(2  \mathbf{X}, \mathbf{W}) c_2(2 \mathbf{X}, \mathbf{W}) }{Q_{m'n'}(2  \mathbf{X}, \mathbf{W}) } 
    +  \frac{\rho (\rho^2 P_{mn}(2  \mathbf{X}, \mathbf{W}) -Q_{m'n'}(2  \mathbf{X}, \mathbf{W})) e_3(2 \mathbf{X}, \mathbf{W}) }{Q_{m'n'}(2  \mathbf{X}, \mathbf{W})  \mathbf{W}}  \\
   	 & &\phantom{2 \rho Y^2} + \frac{(\rho^2 P_{mn}(2  \mathbf{X}, \mathbf{W}) -Q_{m'n'}(2  \mathbf{X}, \mathbf{W}))^2 d_4(2 \mathbf{X}, \mathbf{W})}{P_{mn}(2  \mathbf{X}, \mathbf{W}) Q_{m'n'}(2  \mathbf{X}, \mathbf{W})  \mathbf{W}^2}  ,
\end{array} \right. 
\end{split}  
\eeqn
where the first and second equation are homogeneous polynomials. For $m=4, n=5$ and $m'=4, n'=5$, we set
\beq
\begin{split}
  P_{45}(2  \mathbf{X}, \mathbf{W})  &=  \mu \xi (2 \eta \mathbf{X} - \iota \mathbf{W}) (2 \kappa \mathbf{X} - \lambda \mathbf{W}), \\
  Q_{45}(2  \mathbf{X}, \mathbf{W})  &= \mu' \xi' (2 \eta' \mathbf{X} - \iota' \mathbf{W}) (2 \kappa' \mathbf{X} - \lambda' \mathbf{W}),
\end{split}  
\eeq
and equations for $T_{46,45}, \dots, T_{67,45}, T_{67,46}, \dots, T_{67,67}$ are obtained from $T_{45,45}$ by interchanging parameters in an analogous manner as above.  This procedure generates 36 rational curves. Applying the Nikulin involution in Equation~(\ref{rmk: nikulin 11}), we obtain from the curve $T_{m n, m' n'}$ the curve $T_{k l, k'l'}$ with $\{k, l, m, n\} = \{k', l', m', n'\}= \{4, 5, 6, 7\}$ and $m<n$, $k<l$, $m'<n'$, $k'<l'$.

  We have the following:
\begin{theorem}
\label{thm:polarization11}
\leavevmode
\begin{enumerate}
\item The rational curves $a_1, a_2, a_3, L_4, L_5, L_6, L_7, R_0, R'_4, R'_5, R'_6$ generate the N\'eron-Severi lattice $H \oplus D_4(-1) \oplus A_1(-1)^{\oplus 5}$.  

\item For the K3 surfaces in Theorem~\ref{thm_11} the dual graph of all smooth rational curves is shown in Figure~\ref{fig:pic11_2folds}. 

\item On Figure~\ref{fig:pic11_2folds} the Nikulin involution acts as a horizontal flip, exchanging the nodes $a_1$ and $b_3$.

\item As divisor classes in the N\'eron-Severi lattice, one has the following relations:
\beq
\label{eqn:divisor_classes_aux_11}
\begin{gathered}
 b_1			\sim 2 a_1 + L_4 + L_5 + L_6 + L_7 - R_0, \quad
 R'_7 		\sim a_2  - L_7 + R_0, \\
 S_{n,n'}	 	\sim 2a_1 + L_k + L_l + L_m - R_0 + R'_{n'},\\
 T_{mn, m'n'}	\sim 2a_1 + L_k +L_l - R_0 + R'_{m'} + R'_{n'},\\
 R_{13}		\sim 2 \mathpzc{D}_{11} - b_1,\quad
 \tilde{S}_{n,n'}	\sim 2 \mathpzc{D}_{11} - S_{n,n'}, \quad
  T_{mn, m'n'}	\sim 2 \mathpzc{D}_{11} - T_{kl, k'l'},
\end{gathered}
\eeq
for $\{ k, l, m, n \} = \{ k', l', m', n' \} =  \{4, 5, 6, 7\}$ and $m<n$, $k<l$, $m'<n'$, $k'<l'$, and the divisor
\beqn
 \mathpzc{D}_{11} =  a_1 + 2 a_2 + a_3 + R_0,
\eeqn
which is  invariant under the action of the Nikulin involution.

\item Figures~\ref{fig:pic11_1}-\ref{fig:pic11_2} determine the embeddings of the  reducible fibers from Theorem~\ref{thm_11}.
 \end{enumerate}
\end{theorem}
\begin{figure}
\scalebox{0.35}{%
    	\begin{tikzpicture}
       		\input{ pic_rank_11_0.tex}
    	\end{tikzpicture}}
	\caption{Dual graph of all smooth rational curves with 1-fold (thin) and some 2-fold (thick) edges for $\mathrm{NS}(\mathcal{X}) = H \oplus D_4(-1) \oplus A_1(-1)^{\oplus 5}$}
 	\label{fig:pic11_2folds}
\end{figure}

\begin{proof}
Items (1), (3) and (4) can be shown by explicit computation.
For (2),
the comparison of the divisor classes for the constructed rational curves with \cite{Roulleau22}*{Sec.~11.4} shows that they constitute all rational curves. 
Finally for (5), there is only one way of embedding the reducible fibers of fibration~(1) in Theorem~\ref{thm_11} into the graph in Figure~\ref{fig:pic11_2folds}; see  Figure~\ref{fig:pic11_1}.   
For all other Jacobian fibrations, there is a second embedding of the reducible fibers, related by the action of the Nikulin involution. The embeddings are depicted in Figure~\ref{fig:pic11_2}.  
\end{proof}
%
%
 \subsection{Family of Picard rank 12}\label{subsec:curves 12}

We also determine the dual graph of all rational curves for the K3 surfaces in Theorem~\ref{thm_12} with N\'eron-Severi lattice $H \oplus D_6(-1) \oplus A_1(-1)^{\oplus 4}$. In this situation one has the following 59 rational curves: 
\begin{gather}
\label{eqn:curves_12}
  a_1, a_2, a_3, b_1, b_2, b_3, L_1, L^\pm_2, R_1, R_3, L_n,  R_n,  \\
 \label{eqn:curves_12_aux}
  S^\pm_n, \tilde{S}^\pm_n,  T^\pm_{mn}, \tilde{T}^\pm_{mn} \ \ \text{for $m, n \in \{4, 5, 6, 7\}$ with $m<n$}.
\end{gather}
Again, the lines $L_1, L_2^{\pm}, L_n$ for $n = 4, 5, 6, 7$ were mentioned in Section~\ref{subsec: fam 12}.
The two sets of lines $\{a_1, a_2, a_3\}$ and $\{b_1, b_2, b_3\}$ comes from resolving the singularities at $\mathrm{P}_1$ and $\mathrm{P}_2$ in Remark~\ref{rmk: sing 12}, respectively. The remaining curves can be given as complete intersections.
We set
\beqn
\begin{split}
R_1\colon \quad  &\left\lbrace 
  \begin{array}{rcl}
   0 & = &  2\gamma \mathbf{X}-\delta \mathbf{W} \\
   0 & = &  2 \gamma^4\mathbf{Y}^2 \mathbf{Z}  - \gamma  e_3(\delta, \gamma) \mathbf{W}^2 \mathbf{Z}  - d_4(\delta, \gamma) \mathbf{W}^3 ,  \end{array} \right. \\
R_4\colon \quad &\left\lbrace 
  \begin{array}{rcl}
   0 & = &  2\eta \mathbf{X}-\iota \mathbf{W}\\
   0 & = & 2 \eta^3 \mathbf{Y}^2 - \eta \, c_2(\iota, \eta)  \mathbf{W} \mathbf{Z} -  e_3(\iota, \eta)   \mathbf{W}^2  .  \end{array} \right.
  \end{split}  
\eeqn
where the polynomials $c_2, e_3$ and $d_4$ are determined in Equation~\ref{eqn:polys_2sextics}. 
An equation for $R_3$ is obtained from $R_1$ by replacing $(\gamma, \delta)$   by $(\varepsilon, \zeta)$, and $R_5$, $R_6$, and $R_7$ are obtained from $R_4$ by replacing $(\eta, \iota)$ by $(\kappa, \lambda)$, $(\mu, \nu)$ and $(\xi, \omicron)$ respectively.  That is, they are related by the symmetries in Remark~\ref{rmk:symmetries 12}.
We also have
\beqn
\begin{split}
S^\pm_4\colon \quad &\left\lbrace 
 \begin{array}{rcl}
    0	& =	&  -\iota (1 \pm \chi) \mathbf{W} + 2 \eta (1 \pm \chi) \mathbf{X} + 2 \gamma \varepsilon \eta \mathbf{Z}  \\[0.2em]
    0 	& = 	& 4 \gamma  \varepsilon  \eta  ( 1 \pm \chi ) \mathbf{Y}^2 +  \frac{1}{\mathbf{W}} \Big( ( 1 \pm \chi )^2  \big( 2 \eta \mathbf{X} - \iota \mathbf{W} \big) \, c_2(2 \mathbf{X}, \mathbf{W}) \\
    	&	&\phantom{4 \gamma  \varepsilon  \eta  ( 1 \pm \chi ) \mathbf{Y}^2 +\frac{1}{\mathbf{W}} \Big(} - 2  \gamma  \varepsilon  \eta  ( 1 \pm \chi )  \, e_3(2 \mathbf{X}, \mathbf{W}) + 4  \gamma^2  \varepsilon^2  \eta^2 \frac{ d_4(2 \mathbf{X}, \mathbf{W})  } { 2 \eta \mathbf{X} - \iota \mathbf{W} } \Big) ,
\end{array} \right.\\
\tilde{S}^\pm_4\colon \quad &\left\lbrace 
\begin{array}{rcl}
    0 	& = 	&   (1 \pm \chi) \, c_2(2 \mathbf{X}, \mathbf{W})  \, \mathbf{Z} + \frac{ 2 \gamma \varepsilon \eta \, d_4(2 \mathbf{X}, \mathbf{W})  } { 2 \eta \mathbf{X} - \iota \mathbf{W} }  \\
    0 	& = 	& 4 \gamma  \varepsilon  \eta  ( 1 \pm \chi ) \mathbf{Y}^2 +  \frac{1}{\mathbf{W}} \Big( ( 1 \pm \chi )^2  \big( 2 \eta \mathbf{X} - \iota \mathbf{W} \big) \, c_2(2 \mathbf{X}, \mathbf{W}) \\
    	& &\phantom{4 \gamma  \varepsilon  \eta  ( 1 \pm \chi ) \mathbf{Y}^2 + \frac{1}{\mathbf{W}} \Big(}	 -2  \gamma  \varepsilon  \eta  ( 1 \pm \chi )  \, e_3(2 \mathbf{X}, \mathbf{W}) + 4  \gamma^2  \varepsilon^2  \eta^2 \frac{ d_4(2 \mathbf{X}, \mathbf{W})  } { 2 \eta \mathbf{X} - \iota \mathbf{W} } \Big) .
\end{array} \right.
\end{split}  
\eeqn%
where the second equation for $S_4^{\pm}$ is a homogeneous polynomial of degree two, and
$\chi$ is as in Equation~\ref{eqn:chi} with $\chi^2 \neq 0,1$.
Equations for $S^\pm_5$, $S^\pm_6$, and $S^\pm_7$ are obtained from $S^\pm_4$ by replacing $(\eta, \iota) \leftrightarrow (\kappa, \lambda)$, $(\eta, \iota)  \leftrightarrow (\mu, \nu)$, and $(\eta, \iota)  \leftrightarrow (\xi, \omicron)$, respectively; in the same way, $\tilde{S}^\pm_5$, $\tilde{S}^\pm_6$, and $\tilde{S}^\pm_7$ are obtained from $\tilde{S}^\pm_4$. 
Divisors $S^+_n$ and $S^-_n$, as well as $\tilde{S}^+_n$ and $\tilde{S}^-_n$ for $n=4, 5, 6, 7$ are interchanged by the sign change $\chi \mapsto - \chi$.
Note that $\tilde{S}^\pm_4$ is obtained from $S_4^\pm$ by applying the Nikulin involution in Remark~\ref{rmk: nikulin 12}.
Divisors $S^\pm_n$ and $\tilde{S}^\pm_n$ as well as $L_n$ and $R_n$ for $n=4, 5, 6, 7$  are also related by the action of the Nikulin involution.
Finally, we set
\beqn
\begin{split}
T^\pm_{45}\colon \quad &\left\lbrace 
 \begin{array}{rcl}
    0	& =	&  (1 \mp \chi) \big(2 \gamma \mathbf{X} - \delta \mathbf{W}\big) \mathbf{Z} + 2 \gamma  \mu  \xi (1 \mp \chi)  \big(2 \eta \mathbf{X} - \iota \mathbf{W}\big)  \big(2 \kappa\mathbf{X} - \lambda \mathbf{W}\big)    \\[0.2em]
    0 	& = 	& 4 \gamma  \varepsilon  \kappa \mu \xi  ( 1 \mp \chi ) \mathbf{Y}^2 +  \frac{1}{\mathbf{W}} \left( \frac{4 \gamma^2 \varepsilon \kappa \mu^2 \xi ^2 ( 2 \kappa \mathbf{X} - \lambda \mathbf{W}) (2 \eta \mathbf{X} - \iota \mathbf{W})  \, c_2(2 \mathbf{X}, \mathbf{W})}{ 2 \gamma \mathbf{X} - \delta \mathbf{W} } \right. \\
    	& &\phantom{4 \gamma  \varepsilon  \kappa \mu \xi  ( 1 \mp \chi ) \mathbf{Y}^2} -\left.  2  \gamma  \varepsilon  \kappa \mu \xi  ( 1 \mp \chi )  \, e_3(2 \mathbf{X}, \mathbf{W}) +  \frac{\varepsilon \kappa (1 \mp \chi)^2  (2 \gamma \mathbf{X} - \delta \mathbf{W}) \, d_4(2 \mathbf{X}, \mathbf{W})}  {(2 \eta \mathbf{X} - \iota \mathbf{W}) (2 \kappa \mathbf{X} - \lambda \mathbf{W}) } \right) .
\end{array} \right. 
\end{split}  
\eeqn
Equations for $T^\pm_{46}$, $T^\pm_{47}$, $\dots$,  $T^\pm_{67}$ are obtained from $T^\pm_{45}$ by interchanging parameters in an analogous manner as above.  
Applying the Nikulin involution in Remark~\ref{rmk: nikulin 12}, we also obtain  $\tilde{T}^\pm_{45}, \dots, \tilde{T}^\pm_{67}$. 
For the latter, applying the Nikulin involution essentially interchanges $(\gamma, \delta)$ and $ (\varepsilon, \zeta)$. 

We have the following:
\begin{theorem}\leavevmode
\label{thm:polarization12}
\begin{enumerate}
\item The rational curves $a_1, a_2, a_3, b_2, L_1, L_2^\pm, L_4, L_5, L_6, L_7, R_1$ generate the N\'eron-Severi lattice  $H \oplus D_6(-1) \oplus A_1(-1)^{\oplus 4}$.  
\item For the K3 surfaces in Theorem~\ref{thm_12} the dual graph of all smooth rational curves is shown in  Figure~\ref{fig:pic12_2folds}.
\item On Figure~\ref{fig:pic12_2folds} the Nikulin involution acts as a horizontal flip, exchanging the nodes $a_1$ and $b_2$.
\item As divisor classes in the N\'eron-Severi lattice, one has the following relations:
\beq
\label{eqn:divisor_classes_aux_12}
\begin{gathered}
 S^{\pm}_n 		\sim L_k + L_l + L_m - L^\pm_2 + 2 a_1 + a_2, \quad \tilde{S}^\pm_n 	\sim 2 \mathpzc{D}_{12} - S^{\pm}_n,\\
 T^{\pm}_{mn} 		\sim L_m + L_n  + R_1  -  L_2^{\pm} + 2 a_1 + a_2 , \quad
 \tilde{T}^\pm_{mn} 	\sim 2 \mathpzc{D}_{12} - T^{\pm}_{mn},
\end{gathered}
\eeq
for $\{k, l, m, n\} = \{4, 5, 6, 7\}$, $m < n$, and the divisor
\beqn
 \mathpzc{D}_{12} =  a_1 + 2 a_2 + 3 a_3 + b_2 + 2 L_1 + L^+_2 + L^-_2 ,
\eeqn
which is  invariant under the action of the Nikulin involution.

\item Figures~\ref{fig:pic12_1}-\ref{fig:pic12_3} determine the embeddings of the  reducible fibers from Theorem~\ref{thm_12}.
\end{enumerate}
\end{theorem}

\begin{figure}
\scalebox{0.5}{%
    	\begin{tikzpicture}
       		\input{ pic_rank_12_0.tex}
    	\end{tikzpicture}}
	\caption{Dual graph of all smooth rational curves with 1-fold (thin) and a few 2-fold (thick) edges for $\mathrm{NS}(\mathcal{X}) = H \oplus D_6(-1) \oplus A_1(-1)^{\oplus 4}$}
 	\label{fig:pic12_2folds}
\end{figure}

\begin{proof}
Items (1), (3), (4) can be shown by direct computation.
The comparison of the divisor classes for the constructed rational curves with \cite{Roulleau22}*{Sec.~12.3} gives (2).
For (5), there is only one way of embedding the reducible fibers of fibration~(1) in Theorem~\ref{thm_12} into the graph in Figure~\ref{fig:pic12_2folds}. 
It is depicted in Figure~\ref{fig:pic12_1}.
For the remaining fibrations, there are always two distinct embeddings of the reducible fibers for each Jacobian elliptic fibration, related by the action of the Nikulin involution.  
They are depicted in Figures~\ref{fig:pic12_2} and \ref{fig:pic12_3}. 
\end{proof}
%
%
\subsection{Family of Picard rank 13}
Finally, we determine the dual graph of smooth rational curves for the K3 surfaces in Theorem~\ref{thm_13} with N\'eron-Severi lattice $H \oplus D_8(-1)  \oplus A_1(-1)^{\oplus 3}$. In this situation one has the following 39 rational curves: 
\begin{gather}
\label{eqn:curves_13}
  a_1, \dots, a_5, b_1, b_2, b_3, L_1, L_2, \tilde{L}_2, L_n, R_1, R_3, R_n,  \\
  \label{eqn:curves_13_aux}  
  S_2, S_n,  \tilde{S}_2, \tilde{S}_n,  T_{i, n}, \tilde{T}_{i, n} \ \text{for $i \in \{1, 3\}$, $n \in \{4, 5, 6\}$}.
\end{gather}
The lines  $L_1, L_2, \tilde{L}_2, L_4, L_5, L_6$ were already determined in Section~\ref{subsec: fam 12}.  
The two sets $\{ a_1, a_2, a_3, a_4, a_5 \}$ and $\{b_1, b_2, b_3\}$ denote the curves appearing when resolving the singularities at $\mathrm{P}_1$ and $\mathrm{P}_2$ mentioned in Remark~\ref{rmk: sing 13}. 

The rest of the curves can again be given as complete intersections.
From the curves $R_1$ and $R_3$ in Section~\ref{subsec:curves 12}, the condition $(\xi, \omicron)=(0,1)$ for the equation of $\calQ$ restricts them to curves of the same degree which we will denote by the same symbols.
Setting  $\chi= 1$ in the equations for $S^+_n$ with $n= 4, 5, 6$ in Picard number 12 restricts the divisors that will are denoted by $S_n$. 
The equations for $S^-_n$ become reducible combinations of $L_4, L_5, L_6$.  The same divisors are obtained if one sets $\chi=-1$ in the equations for $S^-_n$. 
Similarly, we obtain the restrictions  $\tilde{S}_n$ of $\tilde{S}^\pm_n$.  
Note that $\tilde{S}_n$ is obtained from $S_n$ by applying the Nikulin involution in Remark~\ref{rmk: nikulin 12}, and the equations for $S_n$ (resp. $\ti{S}_n$)  for $n = 4, 5,6$ are related by interchanging parameters.
Moreover, we have
\beqn
\begin{split}
S_2\colon \quad &\left\lbrace 
  \begin{array}{rcl}
    0 & = &  \eta \kappa \mu \mathbf{W} - \mathbf{Z}  \\
    0 & = &  \eta^2 \kappa^2 \mu^2 c_2(2 \mathbf{X}, \mathbf{W}) +  \frac{\eta \kappa \mu  \, e_3(2 \mathbf{X}, \mathbf{W})  \mathbf{W} +  d_4(2 \mathbf{X}, \mathbf{W})  }{\mathbf{W}^2}  -2 \eta \kappa \mu  \mathbf{Y}^2  ,  \end{array} \right.\\
\tilde{S}_2\colon \quad &\left\lbrace 
  \begin{array}{rcl}
   0 & = & \eta \kappa \mu  \, c_2(2 \mathbf{X}, \mathbf{W}) \mathbf{Z} - \frac{d_4(2 \mathbf{X}, \mathbf{W})  }{\mathbf{W}}\\
   0 & = &  \eta^2 \kappa^2 \mu^2 c_2(2 \mathbf{X}, \mathbf{W}) +  \frac{\eta \kappa \mu  \, e_3(2 \mathbf{X}, \mathbf{W})}{  \mathbf{W}} + \frac{  d_4(2 \mathbf{X}, \mathbf{W})  }{\mathbf{W}^2 (2\eta \mathbf{X} - \iota \mathbf{W})}  -2 \eta \kappa \mu  \mathbf{Y}^2 ,  \end{array} \right. \\
   T_{1,4}\colon \quad &\left\lbrace 
  \begin{array}{rcl}
    0 & = & \gamma \kappa \mu \big(2 \epsilon \mathbf{X} - \zeta \mathbf{W} \big)\mathbf{Z}-  \big( 2 \kappa \mathbf{X} - \lambda \mathbf{W}  \big) \big( 2 \mu \mathbf{X} - \nu \mathbf{W}  \big)\\
    0 & = &- \frac{ ( 2 \kappa \mathbf{X} - \lambda \mathbf{W} )  ( 2 \mu \mathbf{X} - \nu \mathbf{W} ) \, c_2(2 \mathbf{X}, \mathbf{W})}{\mathbf{W} (2 \varepsilon \mathbf{X} - \zeta \mathbf{W})} +   \frac{\gamma  \kappa \mu   \, e_3(2 \mathbf{X}, \mathbf{W})}{\mathbf{W}}\\
    & &  \phantom{- \frac{ ( 2 \kappa \mathbf{X} - \lambda \mathbf{W} )  ( 2 \mu \mathbf{X} - \nu \mathbf{W} ) \, c_2(2 \mathbf{X}, \mathbf{W})}{\mathbf{W} (2 \varepsilon \mathbf{X} - \zeta \mathbf{W})}} -\frac{\gamma^2 \kappa^2 \mu^2 (2 \varepsilon \mathbf{X} - \zeta \mathbf{W}) \, d_4(2 \mathbf{X}, \mathbf{W})  } { (2 \kappa \mathbf{X} - \lambda \mathbf{W})   (2 \mu \mathbf{X} - \nu \mathbf{W}) }   - 2 \gamma \kappa \mu \mathbf{Y}^2. \end{array} \right. \\
   \tilde{T}_{1,4}\colon \quad &\left\lbrace 
  \begin{array}{rcl}
    0 & = &  \big(2 \gamma \mathbf{X} - \delta \mathbf{W} \big)\mathbf{Z} -  \gamma \kappa \mu \big( 2 \eta \mathbf{X} - \iota \mathbf{W}  \big) \mathbf{W}  \\
    0 & = &  \frac{\gamma^2 \kappa^2 \mu^2  ( 2 \eta \mathbf{X} - \iota \mathbf{W} ) \, c_2(2 \mathbf{X}, \mathbf{W})}{2 \gamma \mathbf{X} - \delta \mathbf{W}} +   \frac{\gamma  \kappa \mu   \, e_3(2 \mathbf{X}, \mathbf{W})}{\mathbf{W}} +   \frac{ (2 \gamma \mathbf{X} - \delta \mathbf{W}) \, d_4(2 \mathbf{X}, \mathbf{W})  } { \mathbf{W}^2 (2 \eta \mathbf{X} - \iota \mathbf{W}) }   - 2 \gamma \kappa \mu \mathbf{Y}^2 ,  \end{array} \right. 
\end{split}  
\eeqn%
where all equations are polynomial by construction.  
The curves $S_n$ and $\tilde{S}_n$ (resp. $T_{1,4}$ and $\tilde{T}_{1,4}$) are related by the action of the Nikulin involution. 
Equations for $T_{3,4}$ and $\tilde{T}_{3,4}$ are obtained from $T_{1,4}$ and $\tilde{T}_{1,4}$ by replacing the parameters $(\gamma, \delta) $ by $(\varepsilon, \zeta)$.
Moreover, equations for $T_{m,5}$ and $\tilde{T}_{m,6}$ are obtained from $T_{m,4}$ and $\tilde{T}_{m,4}$ for $m=1, 3$ by replacing $(\eta, \iota)$ by $(\kappa, \lambda)$ and $(\mu, \nu)$ respectively. 
These divisors appeared above are in connection with the pencils $C_n(u, v)$ for $n=4, 5, 6$ in the proof of Theorem~\ref{thm_13}.

\begin{theorem}
\label{thm:polarization13}
\leavevmode
\begin{enumerate}
\item The rational curves $a_1, \dots, a_5, b_1, b_2, L_2, L_4, L_5, L_6, R_1, R_3$ generate the N\'eron-Severi lattice $H \oplus D_8(-1)  \oplus A_1(-1)^{\oplus 3}$. 
\item For the K3 surfaces in Theorem~\ref{thm_13} the dual graph of all smooth rational curves is shown in  Figure~\ref{fig:pic13_2folds}.
\item On Figure~\ref{fig:pic13_2folds} the Nikulin involution acts as a horizontal flip, exchanging the nodes $a_1$ and $b_2$.
\item As divisor classes in the N\'eron-Severi lattice, one has the following relations:
\beq
\label{eqn:divisor_classes_aux_13}
\begin{gathered}
 \tilde{T}_{i, l}	\sim 2a_1 + a_2 -L_2 + L_m + L_n + R_j, \quad T_{i, l}	 	\sim \mathpzc{D}_{13} - \tilde{T}_{i, l}\\
 \tilde{S}_n	\sim 2a_1 + a_2 -L_2 + L_n + R_1 + R_3, \quad S_n			\sim \mathpzc{D}_{13} - \tilde{S}_n\\
 S_2			\sim 2a_1 + a_2 - L_2 + L_4 + L_5 + L_6, \quad \tilde{S}_2 	\sim \mathpzc{D}_{13} - S_2,
\end{gathered}
\eeq
for $\{ i, j \} = \{1, 3\}$, $\{l, m, n \} = \{ 4, 5, 6\}$, and the divisor
\beqn
 \mathpzc{D}_{13} =  3 a_1 + 3 a_2 + 3 a_3 + 2 a_4 + a_5 - 2 b_1 - b_2 + L_2 + L_4 +L_5 + L_6 - R_1 + R_3 ,
\eeqn 
which is invariant under the action of the Nikulin involution
\item Figures~\ref{fig:pic13_1}-\ref{fig:pic13_4} determine the embeddings of the  reducible fibers from Theorem~\ref{thm_13}.
\end{enumerate}
\end{theorem}

\begin{figure}
\scalebox{0.52}{%
    	\begin{tikzpicture}
       		\input{ pic_rank_13_0.tex}
    	\end{tikzpicture}}
	\caption{Dual graph of all smooth rational curves with 1-fold (thin) and some 2-fold (thick) edges for $\mathrm{NS}(\mathcal{X}) = H \oplus D_8(-1) \oplus A_1(-1)^{\oplus 3}$}
 	\label{fig:pic13_2folds}
\end{figure}
\begin{proof}
Items (1), (3), (4) can be shown by direct computation.
The comparison of the divisor classes for the constructed rational curves with \cite{Roulleau22}*{Sec.~13.2} gives (2). 
For (5), 
there is only one way of embedding the reducible fibers of fibration~(1), which is depicted in Figure~\ref{fig:pic13_1}. 
For the remaining fibrations there are  two embeddings in each case, related by the action of the Nikulin involution.  They are depicted in Figures~\ref{fig:pic13_2} to \ref{fig:pic13_4}. 
\end{proof}

Before we end the section, We have the following:
\begin{theorem}
\label{thm:polarizations}
The polarization divisor $\mathpzc{H}$ of $\mathcal{Q}$ is as follows:
\beq
\label{linepolariz}
\begin{array}{rcll}
\mathpzc{H} &= & L_4 + L_5 + L_6 + L_7 +   3 a_1 +2 a_2 + a_3   &\text{in Theorem~\ref{thm_11}, } \\
\mathpzc{H} &= & L_4 + L_5 + L_6 + L_7 +   3 a_1 +2 a_2 + a_3   &\text{in Theorem~\ref{thm_12}, } \\
\mathpzc{H} & = & L_2 + L_4 + L_5 + L_6 +   3 a_1 +3 a_2 + 3 a_3 + 2 a_4 + a_5 &\text{in Theorem~\ref{thm_13}.}
\end{array}
\eeq
One has $\mathpzc{H}^2=4$ and $\mathpzc{H} \circ F=3$ where $F$ is the smooth fiber class of any elliptic fibration that is obtained as the intersection with a pencil of lines.
\end{theorem}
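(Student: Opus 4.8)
The plan is to recognize $\mathpzc{H}$ as the pullback $\pi^{*}\mathcal{O}_{\mathbb{P}^3}(1)$ of the hyperplane class under the resolution $\pi\colon \mathcal{X}\to\mathcal{Q}$, and then read off both identities from the projection formula together with the geometry of a single, well-chosen plane section. The natural choice is the plane $\{\mathbf{Z}=0\}$: substituting $\mathbf{Z}=0$ into Equation~(\ref{mainquartic11}) (resp.~(\ref{mainquartic})) collapses the equation to $d_4(2\mathbf{X},\mathbf{W})=0$, so the scheme-theoretic intersection $\{\mathbf{Z}=0\}\cap\mathcal{Q}$ is exactly the union of the four concurrent lines occurring in~(\ref{linepolariz}), namely $L_4+L_5+L_6+L_7$ for Theorems~\ref{thm_11} and~\ref{thm_12}, and $L_2+L_4+L_5+L_6$ for Theorem~\ref{thm_13} (where $(\xi,\omicron)=(0,1)$ turns $L_7$ into $L_2$). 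This plane passes through $\mathrm{P}_1$ but not through $\mathrm{P}_2$, which is why only the exceptional curves over $\mathrm{P}_1$ enter the formula.

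First I would prove $\mathpzc{H}=\pi^{*}\mathcal{O}(1)$. Writing $\pi^{*}\mathcal{O}(1)=\widetilde{L}+\sum_i m_i\,a_i$, where $\widetilde{L}$ is the strict transform of the four lines and the $a_i$ resolve $\mathrm{P}_1$, the coefficients $m_i$ are forced by the orthogonality relations $\pi^{*}\mathcal{O}(1)\circ a_i=0$. Using the intersection numbers recorded in Tables~\ref{tab:intersections_11},~\ref{tab:intersections_12}, and~\ref{tab:intersections_13} to evaluate $\widetilde{L}\circ a_i$, these relations become a linear system whose matrix is the negative-definite Cartan matrix of the $A_3$-chain (Theorems~\ref{thm_11},~\ref{thm_12}; cf.~Propositions~\ref{prop:sing_points_11},~\ref{prop:sing_points}) resp.~the $A_5$-chain (Theorem~\ref{thm_13}, where $(\xi,\omicron)=(0,1)$ enlarges the singularity at $\mathrm{P}_1$). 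Nondegeneracy gives a unique solution, which one checks to be $(m_i)=(3,2,1)$ resp.~$(3,3,3,2,1)$, reproducing~(\ref{linepolariz}). With $\mathpzc{H}=\pi^{*}\mathcal{O}(1)$ in hand, $\mathpzc{H}^2=(\pi^{*}\mathcal{O}(1))^2=\deg\mathcal{Q}=4$ is immediate; alternatively $\mathpzc{H}^2=4$ follows by a direct computation in $\operatorname{NS}(\mathcal{X})$ from the tables, using $\widetilde{L}^2$, the $\widetilde{L}\circ a_i$, and the $A_n$-form.

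Next I would treat $\mathpzc{H}\circ F=3$. The relevant fibrations are those cut out by a pencil of planes through a line $L\subset\mathcal{Q}$: each such plane meets $\mathcal{Q}$ in $L$ together with a residual plane cubic $D$, and the generic fiber $F$ is the strict transform $\widetilde{D}$, which maps birationally onto $D$. Hence $\pi_{*}F=D$, and the projection formula yields $\mathpzc{H}\circ F=\mathcal{O}(1)\circ D=\deg D=3$. Equivalently, one verifies the number directly from the dual graph by intersecting $\mathpzc{H}$ with the class of any reducible fiber of these fibrations; for instance in the rank-$11$ case the $I_0^{*}$-fiber of fibration~(2) has class $2a_1+a_2+L_5+L_6+L_7$, and since $\mathpzc{H}\circ a_1=\mathpzc{H}\circ a_2=0$ while $\mathpzc{H}\circ L_j=1$, one finds $\mathpzc{H}\circ F=3$. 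The same bookkeeping applies to the line-anchored pencils of Theorems~\ref{thm_12} and~\ref{thm_13}.

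The main obstacle is the correct identification of which fibrations produce degree-$3$ fibers, together with the exceptional-multiplicity bookkeeping. The subtle case is the alternate fibration~(1) of Theorem~\ref{thm_11}: its pencil of planes $\{2v\mathbf{X}+u\mathbf{W}=0\}$ is anchored on the line $\{\mathbf{X}=\mathbf{W}=0\}$, which does \emph{not} lie on $\mathcal{Q}$ when $\rho\neq0$, so no line splits off, the fibers are full plane quartics, and $\mathpzc{H}\circ F=4$ rather than $3$; this fibration is therefore not among those ``obtained as the intersection with a pencil of lines.'' One must also confirm that the fiber meets the cubic with multiplicity one, so that $\pi_{*}F=D$ and not $2D$, and keep track that the three cases involve genuinely different singularity configurations at $\mathrm{P}_1$, so that the linear systems determining the $a_i$-multiplicities, though uniformly solvable, have different solutions.
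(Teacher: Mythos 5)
Your proof is correct, and it takes a more direct geometric route than the paper's. The paper works entirely inside $\operatorname{NS}(\mathcal{X})$: it first uses the fiber decompositions to express the redundant curve classes in terms of a basis $\{F, L^+_2, L_4,\dots,L_7, a_1,a_3, b_1,b_2,b_3\}$, writes $\mathpzc{H}$ as an unknown combination in that basis, and solves the (overdetermined but consistent) linear system coming from $\mathpzc{H}\circ a_i=\mathpzc{H}\circ b_i=0$ and $\mathpzc{H}\circ L=1$ for every line $L\subset\mathcal{Q}$; the values $\mathpzc{H}^2=4$ and $\mathpzc{H}\circ F=3$ are then read off from the explicit solution and the intersection tables. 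You instead exhibit a distinguished representative of $\pi^{*}\mathcal{O}(1)$ --- the plane section $\{\mathbf{Z}=0\}\cap\mathcal{Q}=\mathrm{V}(d_4(2\mathbf{X},\mathbf{W}))$, which is exactly the four concurrent lines of~(\ref{linepolariz}) and misses $\mathrm{P}_2$ --- so that only the exceptional multiplicities over $\mathrm{P}_1$ remain to be determined, and these are forced by the nondegenerate $A_3$- (resp.\ $A_5$-) Cartan system; I checked that the solutions $(3,2,1)$ and $(3,3,3,2,1)$ are consistent with the dual graphs in Figures~\ref{fig:pic11}, \ref{fig:pic12}, \ref{fig:pic13}. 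Your approach buys a conceptual explanation of \emph{why} precisely those lines and those multiplicities occur, immediate proofs of $\mathpzc{H}^2=\deg\mathcal{Q}=4$ and of $\mathpzc{H}\circ F=\deg(\pi_*F)=3$ via the projection formula (the fiber being birational to a residual plane cubic), and far less linear algebra; the paper's approach has the advantage of simultaneously cross-checking the full set of linear relations among the recorded divisor classes. Your caveat about fibration~(1) of Theorem~\ref{thm_11} is also a correct and worthwhile observation: its pencil is anchored on $\{\mathbf{X}=\mathbf{W}=0\}$, which is not contained in $\mathcal{Q}$ for $\rho\neq 0$, the fibers push forward to plane quartics (e.g.\ $F=R_0+a_2$ with $\deg R_0=4$ gives $\mathpzc{H}\circ F=4$), so that fibration must indeed be excluded from the final assertion --- a point the paper leaves implicit in the phrase ``obtained as the intersection with a pencil of lines.''
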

\begin{proof}
We will prove the statement for Picard number 12. The other cases are analogous. By computing all intersection numbers of the rational curves, we may express their divisor classes as linear combinations with integer coefficients in a basis of the lattice.
Let us look at fibration~(2)  in Theorem~\ref{thm_12}, i.e., the standard fibration and compute a basis of the polarization lattice. 
We observe that the nodes $L_1$ and $a_2$ are the extra nodes of the two extended Dynkin diagrams of $\widetilde{D}_4$.  It follows that $L^+_2, L_4, \dots,  L_7$, $a_1, a_3$, $b_1, b_2, b_3$, and the fiber class $F$ of the standard fibration form a basis in $\mathrm{NS}(\mathcal{X})$. For example, taking a fiber class in Figure~\ref{fig:pic12_2_1} we have
\beq
 F = 2 a_1 + a_2 + L_5 + L_6 + L_7.
\eeq
Thus, the polarizing divisor can be written as a linear combination
\beq
\label{eqn:lin_comb12}
\mathpzc{H} = f \,F  + l_2 L^+_2 + \sum_{i=4}^7 l_i \, L_i +  \sum_{i=1}^3 \beta_i \, b_i  
+ \alpha_1 \, a_1 + \alpha_3 \, a_3  \,.
\eeq
We use $\mathpzc{H} \circ a_i = \mathpzc{H} \circ b_i = 0$ for $i=1, 2, 3$, $\mathpzc{H} \circ L_1=\mathpzc{H} \circ L^\pm_2=1$, and  $\mathpzc{H} \circ L_j=1$ for $j=4, \dots, 7$. We obtain a linear system of equations for the coefficients in Equation~(\ref{eqn:lin_comb12}) whose unique solution is given by Equation~(\ref{linepolariz}). 
\end{proof}%
%
%

\newpage
\begin{figure}
	\centering
  	\scalebox{0.1}{%
    		\begin{tikzpicture}
       			\input{ pic_rank_11_1.tex}
    		\end{tikzpicture}}
	\caption{Fibration with reducible fibers ${\color{magenta} \widetilde{A}_1}^{\oplus 9}$ and {\color{yellow} sections}}
  	\label{fig:pic11_1}
\end{figure}%
\begin{figure}
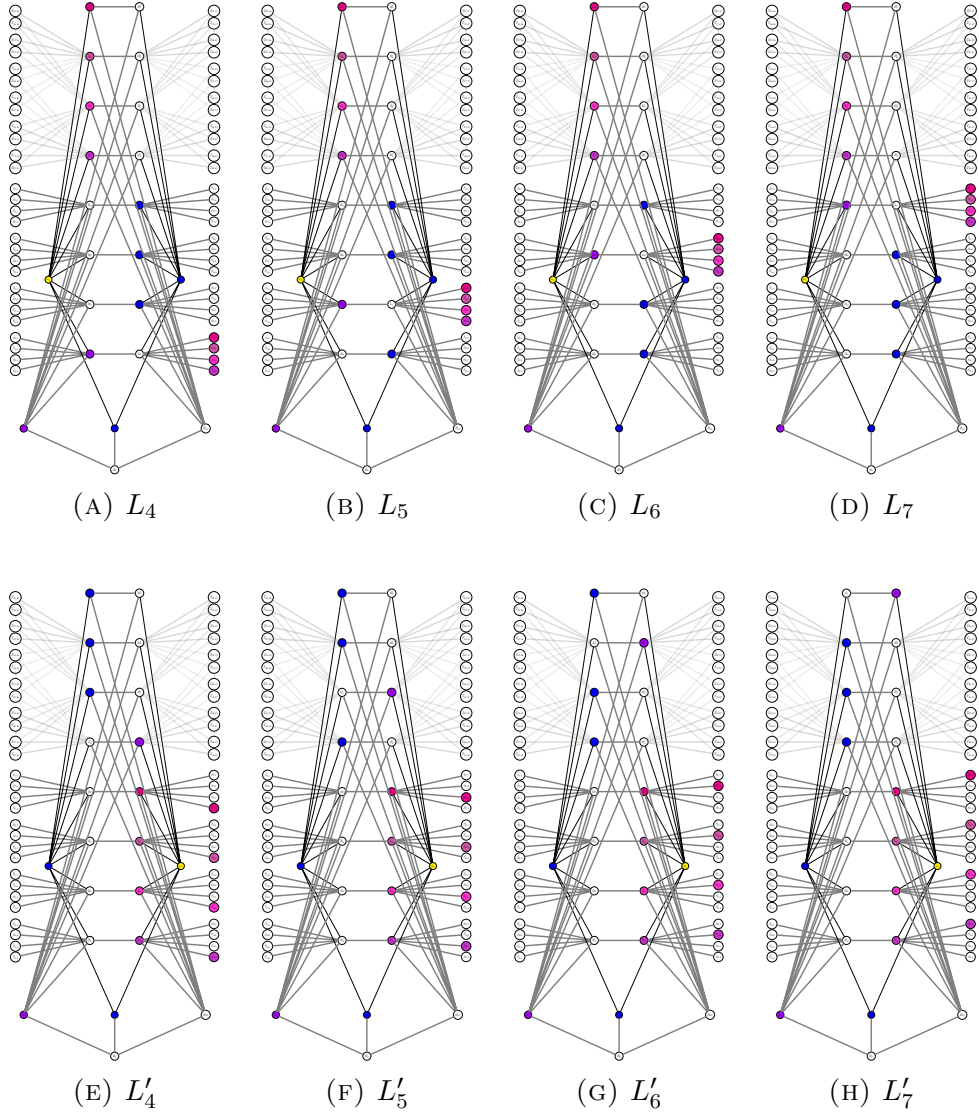

\begin{subfigure}{.22\textwidth}  
  	\centering
	\scalebox{0.1}{%
    		\begin{tikzpicture}
       		\input{ pic_rank_11_2a.tex}
    		\end{tikzpicture}}
  	\caption{$L_4$}
  	\label{fig:pic11_2_1}
\end{subfigure}%
\begin{subfigure}{.22\textwidth}  
  	\centering
	\scalebox{0.1}{%
    		\begin{tikzpicture}
       		\input{ pic_rank_11_2b.tex}
    		\end{tikzpicture}}
  	\caption{$L_5$}
  	\label{fig:pic11_2_2}
\end{subfigure}%
\begin{subfigure}{.22\textwidth}  
  	\centering
	\scalebox{0.1}{%
    		\begin{tikzpicture}
       		\input{ pic_rank_11_2c.tex}
    		\end{tikzpicture}}
  	\caption{$L_6$}
  	\label{fig:pic11_2_3}
\end{subfigure}%
\begin{subfigure}{.22\textwidth}  
  	\centering
	\scalebox{0.1}{%
    		\begin{tikzpicture}
       		\input{ pic_rank_11_2d.tex}
    		\end{tikzpicture}}
  	\caption{$L_7$}
  	\label{fig:pic11_2_4}
\end{subfigure}%
\\[2em]
\begin{subfigure}{.22\textwidth}  
  	\centering
	\scalebox{0.1}{%
    		\begin{tikzpicture}
       		\input{ pic_rank_11_2e.tex}
    		\end{tikzpicture}}
  	\caption{$L'_4$}
  	\label{fig:pic11_2_5}
\end{subfigure}%
\begin{subfigure}{.22\textwidth}  
  	\centering
	\scalebox{0.1}{%
    		\begin{tikzpicture}
       		\input{ pic_rank_11_2f.tex}
    		\end{tikzpicture}}
  	\caption{$L'_5$}
  	\label{fig:pic11_2_6}
\end{subfigure}%
\begin{subfigure}{.22\textwidth}  
  	\centering
	\scalebox{0.1}{%
    		\begin{tikzpicture}
       		\input{ pic_rank_11_2g.tex}
    		\end{tikzpicture}}
  	\caption{$L'_6$}
  	\label{fig:pic11_2_7}
\end{subfigure}%
\begin{subfigure}{.22\textwidth}  
  	\centering
	\scalebox{0.1}{%
    		\begin{tikzpicture}
       		\input{ pic_rank_11_2h.tex}
    		\end{tikzpicture}}
  	\caption{$L'_7$}
  	\label{fig:pic11_2_8}
\end{subfigure}%
\caption{Fibrations with reducible fibers ${\color{blue} \widetilde{D}_4} + {\color{magenta} \widetilde{A}_1}^{\oplus 5}$  and {\color{yellow} section}}
\label{fig:pic11_2}
\end{figure}%

\newpage

\begin{figure}
	\centering
  	\scalebox{\MyScalePicTiny}{%
    		\begin{tikzpicture}
       			\input{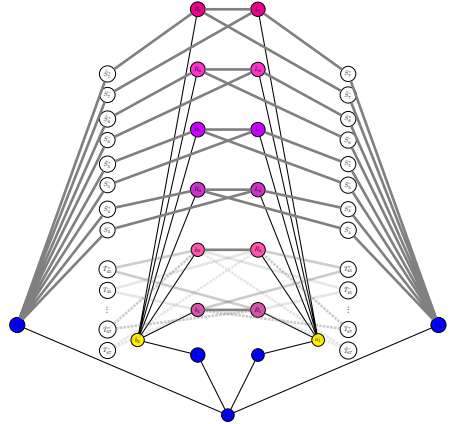}
    		\end{tikzpicture}}
	\caption{Fibration with reducible fibers ${\color{blue} \widetilde{D}_4} + {\color{magenta} \widetilde{A}_1}^{\oplus 6}$  and {\color{yellow} sections}}
  	\label{fig:pic12_1}
\end{figure}%
\begin{figure}
\begin{subfigure}{.22\textwidth}  
  	\centering
	\scalebox{\MyScalePicTiny}{%
    		\begin{tikzpicture}
       		\input{ pic_rank_12_2a.tex}
    		\end{tikzpicture}}
  	\caption{$L_4$}
  	\label{fig:pic12_2_1}
\end{subfigure}%
\begin{subfigure}{.22\textwidth}  
  	\centering
	\scalebox{\MyScalePicTiny}{%
    		\begin{tikzpicture}
       		\input{ pic_rank_12_2b.tex}
    		\end{tikzpicture}}
  	\caption{$L_5$}
  	\label{fig:pic12_2_2}
\end{subfigure}%
\begin{subfigure}{.22\textwidth}  
  	\centering
	\scalebox{\MyScalePicTiny}{%
    		\begin{tikzpicture}
       		\input{ pic_rank_12_2c.tex}
    		\end{tikzpicture}}
  	\caption{$L_6$}
  	\label{fig:pic12_2_3}
\end{subfigure}%
\begin{subfigure}{.22\textwidth}  
  	\centering
	\scalebox{\MyScalePicTiny}{%
    		\begin{tikzpicture}
       		\input{ pic_rank_12_2d.tex}
    		\end{tikzpicture}}
  	\caption{$L_7$}
  	\label{fig:pic12_2_4}
\end{subfigure}%
\caption{Fibrations with reducible fibers ${\color{blue} \widetilde{D}_4} + {\color{green} \widetilde{D}_4} + {\color{magenta} \widetilde{A}_1}^{\oplus 2}$  and {\color{yellow} section}}
\label{fig:pic12_2}
\end{figure}%
\begin{figure}
\begin{subfigure}{.44\textwidth}  
  	\centering
	\scalebox{\MyScalePicTiny}{%
    		\begin{tikzpicture}
       		\input{ pic_rank_12_3a.tex}
    		\end{tikzpicture}}
  	\caption{$L^+_2$}
\end{subfigure}%
\begin{subfigure}{.44\textwidth}  
  	\centering
	\scalebox{\MyScalePicTiny}{%
    		\begin{tikzpicture}
       		\input{ pic_rank_12_3b.tex}
    		\end{tikzpicture}}
  	\caption{$L^-_2$}
\end{subfigure}%
\caption{Fibrations with reducible fibers ${\color{blue} \widetilde{D}_6}+ {\color{magenta} \widetilde{A}_1}^{\oplus 4}$ and {\color{yellow} section}}
\label{fig:pic12_3}
\end{figure}%
\clearpage

\begin{figure}
	\centering
  	\scalebox{0.1}{%
    		\begin{tikzpicture}
       			\input{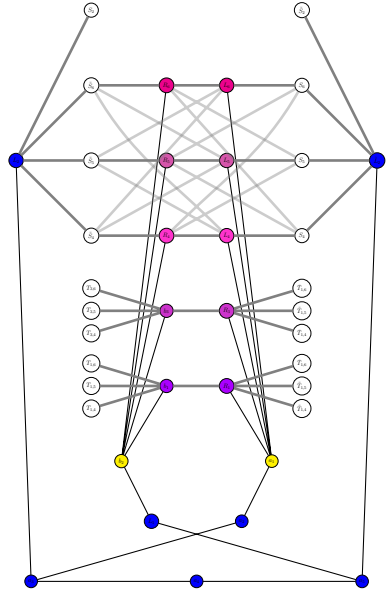}
    		\end{tikzpicture}}
	\caption{Fibration with reducible fibers ${\color{blue} \widetilde{D}_6} + {\color{magenta} \widetilde{A}_1}^{\oplus 5}$  and {\color{yellow} sections}}
	\label{fig:pic13_1}
\end{figure}%
\begin{figure}
\begin{subfigure}{.22\textwidth}  
  	\centering
	\scalebox{0.1}{%
    		\begin{tikzpicture}
       		\input{ pic_rank_13_2a.tex}
    		\end{tikzpicture}}
  	\caption{$L_4$}
  	\label{fig:pic13_2_1}
\end{subfigure}%
\begin{subfigure}{.22\textwidth}  
  	\centering
	\scalebox{0.1}{%
    		\begin{tikzpicture}
       		\input{ pic_rank_13_2b.tex}
    		\end{tikzpicture}}
  	\caption{$L_5$}
  	\label{fig:pic13_2_2}
\end{subfigure}%
\begin{subfigure}{.22\textwidth}  
  	\centering
	\scalebox{0.1}{%
    		\begin{tikzpicture}
       		\input{ pic_rank_13_2c.tex}
    		\end{tikzpicture}}
  	\caption{$L_6$}
  	\label{fig:pic13_2_3}
\end{subfigure}%
\begin{subfigure}{.22\textwidth}  
  	\centering
	\scalebox{0.1}{%
    		\begin{tikzpicture}
       		\input{ pic_rank_13_2d.tex}
    		\end{tikzpicture}}
  	\caption{$L_2$}
  	\label{fig:pic13_2_4}
\end{subfigure}%
\caption{Fibrations with reducible fibers ${\color{blue} \widetilde{D}_4} + {\color{green} \widetilde{D}_6} + {\color{magenta} \widetilde{A}_1}$  and {\color{yellow} section}}
\label{fig:pic13_2}
\end{figure}%
\begin{figure}
	\centering
  	\scalebox{0.1}{%
    		\begin{tikzpicture}
       			\input{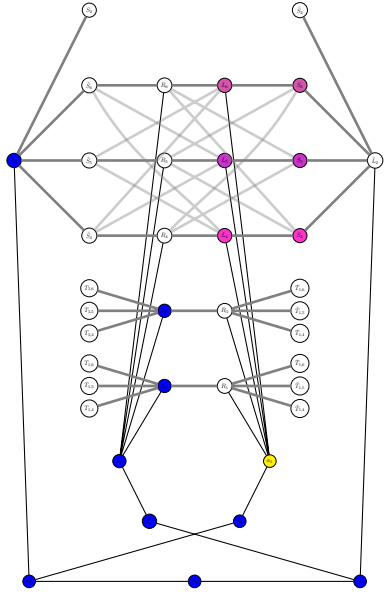}
    		\end{tikzpicture}}
	\caption{Fibration with reducible fibers ${\color{blue} \widetilde{D}_8} + {\color{magenta} \widetilde{A}_1}^{\oplus 3}$  and {\color{yellow} section}}
	\label{fig:pic13_3}
\end{figure}%
\begin{figure}
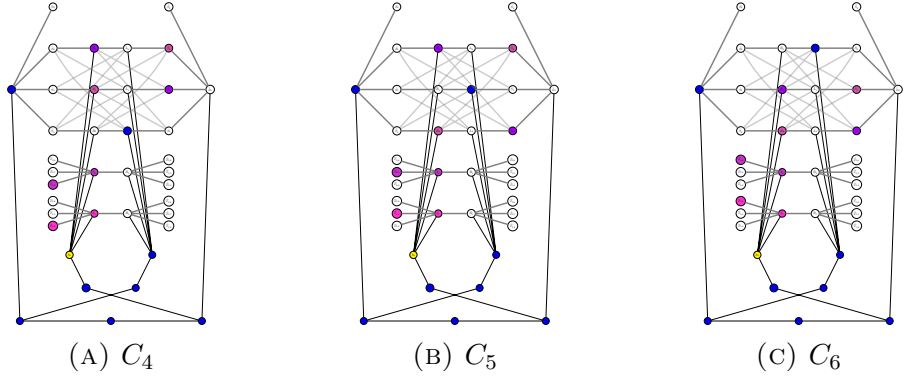

\begin{subfigure}{.3\textwidth}  
  	\centering
	\scalebox{0.1}{%
    		\begin{tikzpicture}
       		\input{ pic_rank_13_4a.tex}
    		\end{tikzpicture}}
  	\caption{$C_4$}
  	\label{fig:pic13_4_1}
\end{subfigure}%
\begin{subfigure}{.3\textwidth}  
  	\centering
	\scalebox{0.1}{%
    		\begin{tikzpicture}
       		\input{ pic_rank_13_4b.tex}
    		\end{tikzpicture}}
  	\caption{$C_5$}
  	\label{fig:pic13_4_2}
\end{subfigure}%
\begin{subfigure}{.3\textwidth}  
  	\centering
	\scalebox{0.1}{%
    		\begin{tikzpicture}
       		\input{ pic_rank_13_4c.tex}
    		\end{tikzpicture}}
  	\caption{$C_6$}
  	\label{fig:pic13_4_3}
\end{subfigure}%
\caption{Fibration with reducible fibers ${\color{blue} \widetilde{E}_7}+ {\color{magenta} \widetilde{A}_1}^{\oplus 4}$  and {\color{yellow} section}}
\label{fig:pic13_4}
\end{figure}%
\clearpage
%
%
\section*{Data Availability Statement}
This manuscript has no associated data.
\section*{Conflict of Interest Statement}
On behalf of all authors, the corresponding author states that there is no conflict of interest.
%
%
\bibliographystyle{amsplain}
\bibliography{references}{}
\end{document}